\definecolor{zzttqq}{rgb}{0.6,0.2,0}
\definecolor{qqqqff}{rgb}{0,0,1}
\theoremstyle{plain}
\newtheorem{theorem}{Theorem}[section]
\newtheorem{corollary}[theorem]{Corollary}
\newtheorem{lemma}[theorem]{Lemma}
\newtheorem{proposition}[theorem]{Proposition}
\theoremstyle{definition}
\newtheorem{definition}{Definition}[section]
\newtheorem{algorithm}{Algorithm}
\newtheorem{procedure}{Procedure}
\theoremstyle{remark}
\newtheorem{remark}{Remark}[section]
\newtheorem{fig}{Figure}[section]
\newtheorem{example}{Example}[section]
\numberwithin{equation}{section}
\numberwithin{table}{section}
\numberwithin{figure}{section}
\def\ww{{\bf w}}
\def\nn{{\bf n}}
\def\sup{{\rm supp}}
\title[]
{Multiplicity--free Skew
Schur functions with full interval support}
\author{Olga Azenhas}
\address{CMUC, Department of Mathematics, University of Coimbra, Apartado 3008,
3001--454 Coimbra, Portugal} \email{oazenhas@mat.uc.pt}
\author{Alessandro Conflitti}
\email{alessandro.conflitti@gmail.com}
\author{Ricardo Mamede}
\address{CMUC, Department of Mathematics, University of Coimbra, Apartado 3008,
3001--454 Coimbra, Portugal} \email{mamede@mat.uc.pt}
\keywords{Skew Schur functions, ribbons, positive Littlewood--Richardson coefficients, multiplicity--free, dominance order, interval support}
\subjclass[2000]{05A17, 05E05, 05E10, 20C30}
\thanks{ This work was partially supported by the Centre for Mathematics of the
University of Coimbra -- UID/MAT/00324/2013, funded by the Portuguese
Government through FCT/MCTES and co-funded by the European Regional
Development Fund through the Partnership Agreement PT2020.
 The first author (OA) was also partially supported by  the FCT sabbatical
grant SFRH/BSAB/113584/2015,
 and wishes to acknowledge the hospitality of the University of Vienna where her sabbatical leaving took place. This work has been conducted when the second author (AC) was a member of the Centre for Mathematics of the University of Coimbra.}
\begin{document}

\begin{abstract}
It is known that the Schur expansion of a skew Schur function runs over the
interval of partitions, equipped with dominance order, defined by the
least and the most dominant Littlewood--Richardson filling of the skew shape.
We characterise skew Schur functions (and therefore the product of two Schur
functions) which are multiplicity--free and  the resulting Schur expansion runs
over the whole interval of partitions, i.e. skew Schur functions having
Littlewood--Richardson coefficients always equal to $1$ over the
full interval.
\end{abstract}

\maketitle

\section{Introduction and statement of results}

The ring of symmetric functions has a linear basis of Schur functions $s_\nu$ indexed by partitions $\nu$. Skew Schur functions $s_{\lambda/\mu}$ are symmetric functions indexed by skew partitions
 $\lambda/\mu$ and they can be expressed as a linear combination of Schur functions by means of the Littlewood-Richardson coefficients $c_{\mu,\nu}^\lambda$,  which are non negative integers,
$$s_{\lambda/\mu}=\sum_{\nu} c_{\mu,\nu}^\lambda s_\nu.$$
In particular, the product of two Schur functions is governed by these coefficients,
$s_\mu s_{\nu}=\sum_{\lambda} c_{\mu,\nu}^\lambda s_\lambda$.  In representation theory this basis is important because its elements  occur as characters of the general linear group $GL_n$, and they correspond to characters of the symmetric group via the Frobenius map. Schur
functions also have an intersection-theoretic interpretation as representatives of Schubert classes  in  the cohomology ring of a Grassmannian.
 Thus Littlewood-Richardson
coefficients  amount to multiplicities of  irreducible representations, as well as   to multiplicities in the decomposition of the cup product of Schubert classes.

 For any skew shape $A$,  the support of $A$ (or $s_A$) is defined to be the set of the conjugate of those
 partitions $\nu$ such that the Schur function $s_\nu$ appears with a  positive
 integer coefficient in the Schur expansion of $s_A$. It is well known that the support of $A$, considered as
a subposet of the dominance lattice, has a top element, $\nn$,  the
conjugate of the partition formed by the row lengths of $A$,
 and a bottom element, $\ww$,  the partition formed by the column lengths of $A$.
 More precisely, the Schur  expansion above  with $A=\lambda/\mu$ can be written within the interval $ [\ww, \nn]$ in the dominance lattice, as
\begin{equation}\label{motiv} s_{\lambda/\mu}=\sum_{\nu'\in[\ww,\nn]} c_{\mu,\nu}^\lambda s_\nu.\end{equation}
A  very general problem is the classification of the shapes $A$ whose support consists
of the whole interval $ [\ww, \nn]$ in the dominance lattice.
In other words, given the triple of partitions $(\mu,\nu,\lambda)$, with $\mu\subseteq \lambda$, we address the question on under which conditions  we have,
$c_{\mu\,\nu}^{\lambda}>0$ {if and only if } $ \nu'\in[\ww,\nn]$, or
$c_{\mu\,\nu}^{\lambda}>0$ if and only if   $ \mu\cup\nu\preceq \lambda\preceq \mu+\nu.$ Efforts to this classification in the case of ribbon shapes have been  in progress \cite{Mn, MnvW1,GaHaSri}. This problem is equivalent to the classification of skew
characters of the symmetric group and  Schubert products
which obey the same properties \cite{gut,KWvS}.
Here we answer the  question for which skew shapes $A$
 it is the case that the Schur function $s_A$ can be expressed as
\begin{equation}\label{free} s_{A}=\sum_{\nu'\in[\ww,\nn]}  s_\nu,
 \end{equation}
 that is, with the coefficient $c_{\mu,\nu}^{\lambda}=1$ over the whole interval $[\ww,\nn]$, and a
 corresponding classification for Schur function products $s_\mu s_\nu$.

The Main Theorem, Theorem \ref{main},  gives a classification of the  multiplicity--free  skew Schur functions $s_A$ with full interval support \eqref{free}, up to a block of maximal width or
maximal depth, and up to a $\pi$--rotation and/or conjugation of the skew shape $A$. The classification consists of a list of seven different
configurations for $A$  with rather delicate conditions on relative part sizes of $A$ in Figure \ref{fig1}. Subsequent Corollary \ref{MainC} classify the multiplicity--free Schur functions products with full interval support. Together with the multiplicity--free (that is, all Littlewood-Richardson
 coefficients are $0$ or $1$)
 skew Schur functions classification due to Thomas-Yong \cite{thomas}, and Gutschwager \cite{gut},  and, therefore, the multiplicity--free Schur function products due to Stembridge \cite{Stem}, our proofs are based on  a procedure described in \cite{az}, here Algorithm 1 in Subsection \ref{subsec:alg}. The algorithm is made of several steps and
 produces all the LR fillings from the least to the most dominant one. Most of the  steps  in the algorithm are necessary to understand which skew shapes are prevented to attain the  full interval. A key tool towards our classification is the family of skew shapes in \eqref{f0} not attaining the full interval.

\subsection{ Schur support and  multiplicity-freeness } The  motivation to study full interval multiplicity--free skew shapes $A$ in \eqref{free}, comes  naturally  when one writes the expansion (\ref{motiv}), and, in particular, when one imposes in this expansion all the coefficients to be equal to 1. The dominance order ``$\preceq$'' on partitions has been used before in the study of Schur functions to prove that the monomial $x^\mu=x_1^{\mu_1} x_2^{\mu_2}\dots$ occurs in $s_\lambda$ if and only if $\mu\preceq \lambda$,
see~\cite{lam,LV}, and to deduce necessary conditions on the support of a skew Schur function $s_A$, namely, that the LR--filling contents of the skew shape $A$ vary respectively between those defined by the least and the most dominant LR--fillings of  $A$, see~\cite{az,Mn,zaballa}.
 The starting point of our  study has been a procedure described in \cite{az}, here Algorithm 1, which produces all the LR fillings from the least to the most dominant one.
Indeed the multiplicity--free phenomenon has been extensively  studied before by several authors.
In~\cite{Stem} the products of Schur functions that are multiplicity--free are completely classified,
i.e. products for which every coefficient in the resulting Schur function expansion is either $0$ or $1$.
This is done both for Schur functions in infinitely many variables and for Schur functions in finitely many variables. The latter being equivalent to a classification of all multiplicity--free tensor products of irreducible representations of ${ GL}_n$ or ${ SL}_n$, or in other words, it is completely determined when the outer products of characters of the symmetric groups have no multiplicity.
Afterwards, in~\cite{Bes} it is achieved the analogous classification for Schur $P$--functions, which solved a similar problem for (projective) outer products of spin characters of double covers of the symmetric groups, and finally in~\cite{SvW} it is solved the multiplicity--free problem for the expansion of Schur $P$--functions in terms of the Schur basis, which in turn yields criteria for when an irreducible spin character of the twisted symmetric groups in the product of a basic spin character with an irreducible character of the symmetric groups is $0$ or $1$. The characterisation of multiplicity--free skew Schur functions was solved in~\cite{gut,thomas}.
Furthermore, using a different combinatorial model, namely the hive model, in \cite{DTK} results similar to those investigated in~\cite{gut,Stem,thomas} are obtained.

Another problem that has received much attention, see for instance \cite{BBR,FFLP,Kir,KWvS,Mn,MnvW,MnvW1,LPP}, is to determine if the difference $s_A-s_B$ of two skew Schur functions is Schur-positive, i.e. if when expanded as a linear combination of Schur functions, all of the coefficients are nonnegative integers. A strong
necessary condition for Schur positivity of $s_A-s_B$ is that the support of $B$ is contained
in the support of $A$, and therefore, the ordering on skew shapes defined by support
containment is related to the Schur-positivity ordering.

 \subsection{Statement of the main results}  We may assume our skew shape $A$ without empty rows or empty columns. If ${\tilde A}$ is the skew Young diagram obtained from $A$ by deleting any empty row and any empty column, the corresponding skew Schur functions are equal
$s_{A}=s_{{\tilde A}}.$
 A  skew Schur function without empty rows or empty columns is said to be {\it basic} \cite{DTK}. This identity allows each skew Schur function to be expressed as a basic skew Schur function.  Schur functions are also invariant under $\pi$-rotation and conjugation. The classification in the Main Theorem of the full interval multiplicity--free skew shapes in \eqref{free} consists of a list of seven different configurations comprising restrictions on the relative part sizes as described in the Figure  \ref{fig1} below.
\begin{theorem}{\em(Main Theorem)}\label{main}
The basic skew Schur function $s_{\lambda/\mu}$ is multiplicity--free and its support is the whole Schur  interval $[{\bf w},{\bf n}]$ if and only if, up to a block of maximal width or maximal depth, and up to a $\pi$--rotation and/or conjugation,  at least one of the following is true:
\begin{itemize}
\item[$(i)$] $\lambda/\mu$ is a partition or a $\pi$-rotation of a partition;
\item[$(ii)$] $\lambda/\mu$ is a two column or a two row diagram ($A1$ configuration);
\item[$(iii)$] $\lambda/\mu$ is an $A2$ configuration;
\item[$(iv)$] $\lambda/\mu$ is an $A3$ configuration;
\item[$(v)$] $\lambda/\mu$ is an $A4$ configuration;
\item[$(vi)$] $\lambda/\mu$ is an $A6$ configuration;
\item[$(vii)$] $\lambda/\mu$ is an $A7$ configuration,
\end{itemize}
\noindent as described in Figure \ref{fig1}.
\end{theorem}
As a consequence of  Theorem  \ref{main} and the classification of multiplicity--free Schur function products \cite{Stem},   we get, in the corollary below, the
characterisation of the multiplicity--free  Schur function products that attain the full interval. It reduces to the product of two Schur functions whose indexing partitions  are precisely given by the configurations: partition, $A1$, and appropriate instances of $A_2$ and $A4$ in Figure \ref{fig1}, as detailed in $(c)$ and $(c')$ of the corollary.
\begin{corollary}\label{MainC}
The Schur function product $s_{\mu}s_{\nu}$ is multiplicity--free and its
support is the whole Schur interval if and only if at least one of the following is true:
\begin{itemize}
\item[$(a)$] $\mu$ or $\nu$ is the zero partition;
\item[$(b)$] $\mu$ and $\nu$ are both rows or both columns;
\item[$(c)$] $\mu=(1^x)$ is a one--column rectangle and  $\nu=(z,1^y)$
is a  hook such that either $z=2$ and $1\le x\leq y+1$, or $z\geq 3$ and $ x=1$ (or vice versa);
\item[$(c')$] $\mu=(x)$ is a one--row rectangle and  $\nu=(z,1^{y})$
is a  hook such that either $y=1$ and $1\le x\leq z$, or $y\geq 2$ and $ x=1$ (or vice versa).
\end{itemize}
\end{corollary}

 \begin{fig}\label{fig1}
 The seven full interval multiplicity--free skew shapes $(i)-(vii)$ in Theorem \ref{main}, up to a block of maximal width or
maximal depth, and up to a $\pi$--rotation and/or conjugation, with the inner shape coloured in blue:

\begin{align*}\label{fullshapes}
\parbox{8.0cm}{$\quad \quad$ \begin{tikzpicture}
\draw (0,0) -- (0.5,0) -- (.5,.5) -- (1,.5) -- (1,1) -- (1.25,1) --
(1.25,1.25) -- (1.75,1.25) -- (1.75,1.75) -- (0,1.75) -- (0,0);
\end{tikzpicture}}\parbox{7.5cm}{$A1:\quad$\begin{tikzpicture}
\fill[color=blue!20] (0,.25) rectangle (1,.5);\draw (0,.25) rectangle (1,.5);
\draw (0,0) rectangle (1,.25);\draw (1,.25) rectangle (2.25,.5);
\end{tikzpicture},}\\
\parbox{4.3cm}{$A2:\;$\begin{tikzpicture}
\fill[color=blue!20] (0,.5) rectangle (1.5,.75);
\draw (0,0) rectangle (.75,.5)  (0,.25) rectangle (2.25,.5) (1.5,.25)
rectangle (2.25,.75) (2.25,.5) rectangle (3,.75);
\draw (0,0) -- (0,.75) -- (2,.75);
\node at (2.6,.9) {$a$};\node at (1.9,.95) {$b$};\node at (.4,.95)
{$d$};\node at (1.1,.9) {$c$};
\end{tikzpicture}}\parbox{3cm}{$\begin{matrix}1\le a\leq
c+1,\\ 1\le d\leq b+1,\\
b,c\ge 0.\\
\end{matrix}$}
\parbox{4.5cm}{$A3:\;$\begin{tikzpicture}
\fill[color=blue!20] (0,.25) rectangle (.25,1.75);
\fill[color=blue!20] (.25,1) rectangle (1.25,1.75);
\draw (0,0) rectangle (.5,.25)  (.25,0) rectangle (.5,1) (.25,.75) rectangle
(1.5,1) (1.25,.75) rectangle (1.5,1.75);
\draw (0,0) -- (0,1.75) -- (1.5,1.75);
\node at (1.7,1.4) {$x$};\node at (.65,.47) {$y$};\node at (0.13,.4)
{$a$};\node at (.85,1.2) {$b$};
\end{tikzpicture}}\parbox{4cm}{$\begin{matrix}a=x=1\\b,y\ge 1;\text{
or}\\ a=1, 1\le x\leq y+1\\ b,y\ge 1.\end{matrix}$}\\
\\
\parbox{3.7cm}{$A4:\;$\begin{tikzpicture}
\fill[color=blue!20] (0,1) rectangle (1,1.75);
\draw (0,0) rectangle (.25,1)  (0,.75) rectangle (1,1) (1,1) rectangle
(1.25,1.75);
\draw (0,0) -- (0,1.75) -- (1.25,1.75);
\node at (1.4,1.4) {$x$};\node at (.4,.4) {$y$};\node at (.6,1.15) {$a$};
\end{tikzpicture}}\parbox{4cm}{$\begin{matrix}a=1,\,
1\le x\leq y+1,\\
y\ge 1;\text{ or}\\ a\geq 2,\, x=1,\\y\ge 1.\end{matrix}$}
\parbox{4cm}{$A6:\;$\begin{tikzpicture}
\fill[color=blue!20] (0,1.2) rectangle (.25,1.45);
\draw (0,1.2) -- (0,1.45) -- (.25,1.45);
\draw (0,0.55) rectangle (.25,1.2) (.25,1.2) rectangle (1.25,1.45) (0,.8)
rectangle (1,1.45);\node at (.6,1.6) {$a$};\node at (-.2,.95) {$x$};
\end{tikzpicture}}\parbox{4cm}{$\begin{matrix}a,\,x\geq 1.\end{matrix}$}\\
\end{align*}
\begin{align*}
\parbox{3cm}{$A7:\;$\begin{tikzpicture}
\fill[color=blue!20] (0,.25) rectangle (.25,1.75);\fill[color=blue!20]
(0.25,1) rectangle (.5,1.75);
\fill[color=blue!20] (.5,1.5) rectangle (.75,1.75);
\draw (0,0) rectangle (.25,.25)  (.25,0) rectangle (.5,1) (.5,0) rectangle
(.75,1.5) (.75,1.75) rectangle (1,.75);
\draw (0,0) -- (0,1.75) -- (1,1.75);\draw (.75,1.5) -- (1,1.5);
\node at (1.15,1.15) {$k$};
\end{tikzpicture}}\parbox{4.5cm}{\quad$\begin{matrix}\ww=(w_1,k+1,k+1,1)\\
w_1\geq k+2,\\
k\geq 2,\end{matrix}$}\\
\end{align*}
where in the $A7$ configuration, $\ww$ is the partition formed by the column lengths of the skew diagram.
\end{fig}

\subsection{Organisation and contents}  The paper is organised in five sections. In the next,  which in turn is divided in four subsections, we give necessary definitions regarding partitions, skew shapes and operations on them;  the lattice of integer partitions with dominance order; Littlewood--Richardson tableaux using the notion of complete sequence of strings introduced in \cite{az}; Schur, skew Schur functions, and, following the presentation given in \cite{DTK}, the classification of multiplicity--free skew Schur functions due to Thomas and Yong \cite{thomas}, and Gutschwager \cite{gut},  and therefore the multiplicity--free Schur function products due to Stembridge \cite{Stem}. In section three the notion of Schur interval and support of a skew diagram and therefore of skew Schur function, considering the conjugate of the content of an LR tableau, are introduced. Algorithm \ref{Proc:alg} in \cite{az}, one of the main tools of this work, is introduced. Some other related results  in \cite{az,vW} are also recalled.

Section four is divided in two subsections. In Subsection 4.1, general skew shapes whose support does not achieve the Schur interval are deduced. We stress Lemma \ref{las} and Corollary \ref{TF3}, giving the family \eqref{f0} of skew shapes with non full interval support. They will be extensively used in the remaining of the paper   to prevent  full support.  On the other hand, we observe that horizontal (and vertical) strips have full support and that the support of a disconnected skew shape is equal to the Schur interval only if its components are ribbon shapes. (Although this is not sufficient.) Also,  in Proposition \ref{rib}, we conclude that the Schur function product $s_\mu s_\nu$
 has full interval support only if  $\mu$ and $\nu$ are either rows or columns, or one of the following holds: both hooks, a one-line rectangle and a hook or {\em vice-versa}.  In Subsection 4.2, the support of specific skew shapes, which includes all configurations $A_2,A_3,A_4,A_6$ and $A_7$, are analysed, and used in  the last
section devoted to the proof of our Main Theorem \ref{main}, and henceforth, Corollary \ref{MainC}.  We remark that the strategy  in the proof of Theorem \ref{main} follows closely the one used in  Lemma 7.1 \cite{DTK} as, roughly speaking, we have to {\em shrink} the multiplicity--free skew shapes in order to fit the full Schur interval.
 Finally, as a consequence of our analysis, we also include in the last section, Corollary \ref{cor:fullschur}, a classification of two Schur functions product with full interval Schur support, that is, with all LR coefficients positive.

\section{Preliminaries}

\subsection{Partitions and diagrams.}

Let $\mathbb N$ denote the set of non negative integers. A weakly decreasing sequence of positive integers $\lambda=(\lambda_1,\ldots,\lambda_{\ell(\lambda)})$,  whose sum is $n$ is said to be a
{\it partition} of $n$, denoted $\lambda\vdash n$. We say that $n$ is the {\em size} of $\lambda$, denoted  $|\lambda|$, and we call the $\lambda_i$ the {\em parts} of $\lambda$ and  $\ell(\lambda)$ the {\em length} of $\lambda$. It is convenient to set $\lambda_k=0$ for $k> \ell(\lambda)$.  We also let $0$ denote the partition with length $0$.
The set of all partitions of $n\in\mathbb N$ is denoted by $P_n$.
If $\lambda_i=\lambda_{i+1}=\cdots=\lambda_{i+j-1}=a$, we denote the sublist $\lambda_i,\ldots,\lambda_{i+j-1}$ by $a^j$, for $j> 0$.
We identify a partition $\lambda\vdash n$ with its {\em Young diagram}, in ``English convention", which we also denote by $\lambda$: containing $\lambda_i$ left justified boxes in the $i$th row, for $1\leq i\leq \ell(\lambda)$, and use the matrix--type coordinates to refer to the boxes.
For example, if $\lambda=(5,4,2)$, which we often abbreviate to $542$, the Young diagram is $$\tiny\young(\hfill\hfill\hfill\hfill\hfill,\hfill\hfill\hfill\hfill,\hfill\hfill).$$
A partition with at most
one part size is called a {\em rectangle}, and a partition with exactly two different part sizes is called a {\it fat hook}. A fat hook is said to be a {\em near rectangle} if it becomes a rectangle when one suppresses one row or one column, and   just a {\em hook} if it becomes a one row rectangle when we suppress a column.
If $\mu$ is another partition, we write $\lambda\supseteq\mu$ whenever $\mu$ is contained in $\lambda$  as Young diagrams, or, equivalently, $\mu_i\le \lambda_i$, for all $i\ge 1$. In this case, we define  the skew diagram $\lambda/\mu$ which is obtained from $\lambda$ by removing $\mu_i$ boxes from the $i$th row of $\lambda$, for $i=1,\ldots,\ell(\mu)$.  In particular,  $\lambda/0=\lambda$. The size of $\lambda/\mu$ is $|\lambda|-|\mu|$, denoted  $|\lambda/\mu|$.   A a skew--diagram is {\em connected} if, regarded as a union of solid squares, it has a connected interior; otherwise it is {\em disconnected}.
 A {\em ribbon shape} is a connected skew--diagram with no blocks of $2\times 2$ squares.  A skew--diagram forms a {\em vertical (respect. horizontal) strip} if it has no two boxes in the same row (respect. column). In particular, they are disconnected skew--diagrams or single rows or columns.
\begin{example}
    The skew diagram for  $\lambda/\mu=(4,4,2)/(2,1)$ is $$\tiny\young(::\hfill\hfill,:\hfill\hfill\hfill,\hfill\hfill),$$
     \noindent which is connected but it is not a ribbon shape. Instead, $\tiny\young(:::\hfill\hfill,:\hfill\hfill,\hfill\hfill)$ is  a disconnected skew--diagram with two components $\tiny\young(\hfill\hfill)$ and $\tiny\young(:\hfill\hfill,\hfill\hfill)$; and
     $\tiny\young(::\hfill\hfill\hfill,::\hfill,:\hfill\hfill,\hfill\hfill)$ is a ribbon shape.
   The following are respectively horizontal and vertical strips
    $\tiny\young(:::\hfill,::\hfill,\hfill\hfill),$   $\tiny\young(::::\hfill,:::\hfill,:::\hfill,::\hfill)$.
\end{example}

The $\pi$--{\em rotation} of a skew diagram $\lambda/\mu$, denoted $(\lambda/\mu)^{\pi}$, is obtained rotating $\lambda/\mu$ through $\pi$ radians. Denote by $\lambda'$ the partition obtained by transposing the diagram of  $\lambda$, called  {\em conjugate partition} of $\lambda$, and set $(\lambda/\mu)':=\lambda'/\mu'$.
If $\lambda\subseteq m^n$, then define its $m^n$--{\em complement} as $\lambda^*=(m^n)/\lambda$, where $\lambda^*_k=
m-\lambda_{n-k+1}$ for $k = 1, 2,\ldots, n$. In particular, $(\lambda^*)^\pi$ is a partition.

\begin{example}
If $\lambda/\mu=(4,4,2)/(2,1)$, then 
the $4^3$--complement of $\lambda$, the $\pi$--rotation, and the transposition are respectively
$$\lambda^*={\tiny\young(\hfill\hfill),\qquad(\lambda/\mu)^{\pi}=\tiny\young(::\hfill\hfill,\hfill\hfill\hfill,\hfill\hfill)},\quad\text{ and }\quad(\lambda/\mu)'=\tiny\young(::\hfill,:\hfill\hfill,\hfill\hfill,\hfill\hfill).$$
\end{example}

A partition $\lambda\subseteq m^n$
naturally defines a lattice path from the southwest to the northeast corner points of the
rectangle. Following Thomas and Yong \cite{thomas} we let the $m^n$--{\em shortness} of $\lambda$ to be
the length of the shortest straight line segment of the path of length $m + n$
from the southwest to northeast corner of $m^n$ that separates $\lambda$ from the $\pi$--rotation of
$\lambda^*$. For instance, if $\lambda=(4,4,2)$ then the path from the southwest to the northeast corner of the $4^3$ rectangle that borders $\lambda$ is $(2,1,2,2)$, and therefore the $4^3$--shortness of $\lambda$ is $1$.

The {\it sum} $\lambda+\mu$ of two partitions $\lambda$ and $\mu$, is the partition whose parts are equal to $\lambda_i+\mu_i$, with $i=1,\ldots,\max\{\ell(\lambda),\ell(\mu)\}$.
  Using conjugation, we define the {\it union} $\lambda\cup\mu:=(\lambda'+\mu')'$. Equivalently, $\lambda\cup\mu$ is obtained by taking all parts of $\lambda$ jointly with those of $\mu$ and rearranging all these parts in descending order.

\begin{example}
Let $\lambda=\tiny\young(\hfill\hfill\hfill,\hfill\hfill,\hfill)$ and $\mu=\tiny\young(\hfill\hfill,\hfill)$. Then,
$\lambda+\mu=\tiny\young(\hfill\hfill\hfill\hfill\hfill,\hfill\hfill\hfill,\hfill),\quad \lambda\cup\mu=
\tiny\young(\hfill\hfill\hfill,\hfill\hfill,\hfill\hfill,\hfill,\hfill).$

\end{example}

 Fix a positive integer $n$, and let $\lambda$ and $\mu$ be two partitions with length $\leq n$.
  The {\it product} $\lambda^{\pi}\bullet_n\mu$ of two partitions $\lambda$ and $\mu$ is defined as
 $$(\lambda_1+\mu_1,\ldots,\lambda_1+\mu_n)/(\lambda^*)^\pi,$$

\noindent where $\lambda^*=\lambda_1^n/\lambda$. (When it is clear from the context we shall avoid in the notation $\bullet_n$ the subindex $n$.)
   Graphically, place $\lambda^\pi$ in the southeast corner of the rectangle $\lambda_1\times n$, and place $\mu$ in the northwest corner of the rectangle $\mu_1\times n$. Then, form the rectangle $(\lambda_1+\mu_1)\times n$ by gluing together  the  rectangles $\lambda_1\times n$ and $\mu_1\times n$ in this order.   The outcome  diagram is a connected skew--diagram when $n<\ell(\lambda)+\ell(\mu)$ and a disconnected one otherwise.
  As  illustrated below with $\lambda=(3,2^2,0^2)$ and $\mu=(2,1^2,0^2)$, we obtain $\lambda^{\pi}\bullet_5\mu=(5,4^2,3^2)/(3^2,1^2)$,
$$\lambda^{\pi}\bullet\mu=
\tiny\young(:::\hfill\hfill,:::\hfill,:\hfill\hfill\hfill,:\hfill\hfill,\hfill\hfill\hfill).$$

\subsection{Dominance order on partitions}

\begin{definition} The {\em dominance order} on partitions $\lambda,\mu\vdash n$ is defined by setting $\lambda\preceq\mu$ if
$$\lambda_1+\cdots+\lambda_i\leq\mu_1+\cdots+\mu_i,$$
for $i=1,\ldots,\min\{\ell(\lambda),\ell(\mu)\}$.
 \end{definition}
 $(P_n,\preceq)$ is a lattice with maximum element $(n)$ and minimum element $(1^n)$, and is self dual under the map which sends each partition to its conjugate.
Graphically,  $\lambda\preceq\mu$ if and only if the diagram of $\lambda$ is obtained by ``lowering" at least one box in the diagram of $\mu$. Clearly $\lambda\preceq\mu$ if and only if $\mu'\preceq \lambda'$.
Moreover, $\mu$ {\em covers} $\lambda$, written as $\lambda\lhd\mu$, if and only if $\mu$ is obtained from $\lambda$ by lifting exactly one box in the diagram of $\lambda$ to the next available position such that the transfer must be from some $\lambda_{i+1}$ to $\lambda_i$ , or from $\lambda'_{i-1}$ to $\lambda'_i$. The interval $[\lambda, \mu]$ denotes the set of  all partitions $\nu$ such that
$\lambda \preceq \nu \preceq  \mu$. The chain $ \lambda = \lambda^0 \preceq \lambda^1\preceq\dots \preceq \lambda^k = \mu$, $k\ge 0$, is said to be   {\em saturated}
if $\lambda^{i}$ covers $ \lambda^{i-1}$, for $i = 1,\dots, k$ \cite{Bry, dominance}.

\subsection{ Littlewood--Richardson  tableaux.}\label{lrtableaux}

A \emph{semi--standard Young}
\emph{ta\-bleau} (SSYT) $T$ of shape $\lambda/\mu$ is a filling of the boxes in the diagram $\lambda/\mu$ with integers such that: $(i)$ the entries of each row weakly increase when read from left to right, and $(ii)$ the entries of each column strictly increase when read from top to bottom. The {\it reading word} $w$ of a SSYT $T$ is the word obtained by reading the entries of $T$ from right to left and top to bottom \cite{stanley}.
If, for all positive integers $i$ and $j$, the first
$j$ letters of $w$ includes at least as many $i'$s as $(i + 1)'$s, then we say that $w$
is a {\it lattice}. If $\alpha_i$ is the number of $i'$s appearing in $T$, and therefore in $w$, then the sequence $(\alpha_1,\alpha_2,\ldots)$ is called the {\it content} of $T$, and of $w$. Clearly, the content of a lattice word is a partition.
A SSYT $T$ whose word is a lattice is said to be  a Littlewood--Richardson tableau (LR tableau for short).
Given the partition $m=(m_1,\ldots,m_s)^\prime$ the set of all lattice words with content $m$ is equal to the set of all shuffles
of the $s$ words $12\cdots m_1$, $12\cdots m_2,\dots, 12\cdots m_s$.
Recall that a word $w$ is a shuffle of the words
$u$ and $v$ if $u$ and $v$  can be embedded as subwords of $w$ that occupy complementary
sets of positions within $w$. A shuffle $w$ of the words $u_1, u_2, \ldots , u_q$ is the empty word
for $q = 0$, the word $u_1$ for $q = 1$, and is, otherwise, a shuffle of $u_1$ with some shuffle of
the words $u_2,\ldots, u_q$ \cite{azma}.

\begin{example}\label{exemp}
    The following is a SSYT of shape $\lambda/\mu=(4,3,3,2,1)/(2,1,1)$, content $m=(4,2,2,1)=(4,3,1,1)^\prime$ and reading word $w=112132413$: $$\small\young(::11,:12,:23,14,3).$$
The reading word $w=112132413$ is a lattice word, and it is  a shuffle of the four words $1234,\,123,\;1$ and $1$. Therefore this SSYT is an LR tableau.
\end{example}

Taking into account  the shuffle property of a lattice word, we give another characterisation of a LR tableau that we shall rather use in this work.
This is based on \S 3 of~\cite{az}, especially Definitions 5 and 6 and Theorem 5, and we refer to it for proofs and further details.

\begin{definition}
Given a semi--standard tableau $T$, a sequence $S_k=\left(y_{1},y_{2},\ldots,y_{k}\right)$ of $k$ positive integers
 is a $k$--\emph{string} (or just string, for short, when there is no ambiguity) of $T$ if
$y_{1}<\cdots< y_{k}$ and the rightmost box in row $y_{j}$ is labeled with $j$; the corresponding strip, denoted by $st\left(S_{k}\right)$, is the union of all rightmost boxes in rows $y_{j}$, for all $j=1, \ldots ,k$.

We say that $S_{k}=\left(y_{1},y_{2},\ldots,y_{k}\right) \leq S_{t}=\left(z_{1},z_{2},\ldots,z_{t}\right)$
if $k \geq t$ and $y_{j} \leq z_{j}$ for all $j=1 , \ldots ,t$.

\vskip 1.5mm

We define in a recursive way $\left(S_{m_{1}}, S_{m_{2}}, \ldots  S_{m_{s}}\right)$ a \emph{complete sequence of strings} of the tableau $T$ having content $\left(m_1,m_2,\ldots,m_s\right)^{\prime}$ (note that we are writing the content in terms of its conjugate)
if $S_{m_{1}}$ is a string of $T$ and $\left(S_{m_{2}}, \ldots  S_{m_{s}}\right)$ is a complete sequence of strings of the tableau $T \setminus st\left(S_{m_{1}}\right)$ (when this set is not empty) having content $\left(m_2,\ldots,m_s\right)^{\prime}$.

In other word, $\left(S_{m_{1}}, S_{m_{2}}, \ldots  S_{m_{s}}\right)$ is a complete sequence of strings of $T$ if
$S_{m_{j}}$ is a string for $T\setminus \{\bigcup_{k=1}^{j-1} st\left(S_{m_{k}}\right)\}$ for all $j=1, \ldots ,s$.
\end{definition}

\begin{example}
$$T=\small\young(::11,:12,:23,14,3)$$ \noindent is a LR tableau with content $(4,2,2,1)=(4,3,1,1)'$
and it admits
$S_{4}=\left(1,2,3,4\right) \leq S_{3}=\left(1,3,5\right) \leq S_{1}=\left(2\right) \leq S_1=\left(4\right)$ as complete sequence of strings.

In fact, we have
\begin{eqnarray*}
T & = & \small\young(::11,:12,:23,14,3) \\
T \setminus st\left(S_{4}\right) & = & \small\young(::1,:1,:2,1,3) \\
\left(T \setminus st\left(S_{4}\right)\right) \setminus st\left(S_{3}\right) & = &\small \young(:1,1) \\
\left(\left(T \setminus st\left(S_{4}\right)\right) \setminus st\left(S_{3}\right)\right) \setminus st\left(S_{1}\right)
& = & \small\young(1).
\end{eqnarray*}

\end{example}

The following result holds, which is nothing but Theorem 5 in~\cite{az}.

\begin{proposition} \label{P:css}
A semi--standard tableau with content $m=(m_1,m_2,\ldots,m_s)^{\prime}$ is an LR tableau if and only if it has a complete sequence of strings $S_{m_1}, S_{m_2},\ldots, S_{m_s}$; and, in particular,
there is always one satisfying
$S_{m_1}\leq S_{m_2}\leq\cdots\leq S_{m_s}$.
\end{proposition}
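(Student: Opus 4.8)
The plan is to prove the equivalence by relating a complete sequence of strings directly to the shuffle characterization of lattice words recalled just above the statement. Recall that a semi--standard tableau $T$ with content $m=(m_1,\ldots,m_s)'$ is an LR tableau precisely when its reading word $w$ is a lattice word, and that the lattice words of content $m$ are exactly the shuffles of the $s$ increasing words $12\cdots m_1,\ 12\cdots m_2,\ \ldots,\ 12\cdots m_s$. So the real content of the proposition is: such a shuffle decomposition of $w$ exists if and only if $T$ admits a complete sequence of strings, and moreover one may choose the decomposition (equivalently, the strings) to be weakly increasing in the order ``$\le$'' on strings. I would run the argument by induction on $s$ (equivalently on $m_1$, the number of columns of the content diagram, or on the number of letters $1$ appearing in $T$), peeling off one string/one factor $12\cdots m_1$ at a time.

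First I would establish the key extraction step: if $w$ is a lattice reading word of $T$, there is a string $S_{m_1}=(y_1<\cdots<y_{m_1})$ of $T$ whose strip $st(S_{m_1})$, when removed, leaves a semi--standard tableau $T\setminus st(S_{m_1})$ whose reading word is again a lattice word, now of content $(m_2,\ldots,m_s)'$. Concretely, for each $j=1,\ldots,m_1$ I pick $y_j$ to be the row of the rightmost box of $T$ labelled $j$; one checks $y_1<\cdots<y_{m_1}$ using column--strictness together with the lattice condition (the rightmost $j$ cannot sit in a row weakly above a rightmost $j{-}1$, else the lattice property would be violated at the position where that $j$ is read). The standard ``the rightmost occurrence of each letter forms a removable lattice strip'' lemma — this is exactly the mechanism behind reverse jeu--de--taquin / the recursive structure of lattice words — then shows that deleting these boxes keeps the filling semi--standard and keeps the word lattice, and it removes exactly one letter of each value $1,\ldots,m_1$, i.e. one copy of the factor $12\cdots m_1$. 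Conversely, for the ``if'' direction I would show that if $(S_{m_1},\ldots,S_{m_s})$ is a complete sequence of strings, then reinserting the strip $st(S_{m_1})$ into an LR filling of $T\setminus st(S_{m_1})$ recovers a semi--standard tableau whose word is a shuffle of $12\cdots m_1$ with a lattice word of content $(m_2,\ldots,m_s)'$, hence itself lattice: the defining inequalities of a string are precisely what guarantees that threading the subword $12\cdots m_1$ back in through the prescribed rows preserves the lattice/shuffle property. Both directions are then closed by the inductive hypothesis applied to $T\setminus st(S_{m_1})$.

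For the final ``in particular'' clause — that one can choose $S_{m_1}\le S_{m_2}\le\cdots\le S_{m_s}$ — I would argue that among all complete sequences of strings one may select them greedily so that each $S_{m_j}$ is rowwise as \emph{high} (small row indices) as possible; an exchange argument shows that if some $S_{m_j}$ has an entry strictly below the corresponding entry of $S_{m_{j+1}}$ then the two strings can be locally swapped/rectified without destroying completeness, pushing the configuration toward the weakly increasing one. This is essentially Theorem~5 of~\cite{az}, and I would cite the sorting lemma there rather than reprove it in detail.

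The main obstacle I expect is the extraction step — verifying carefully that removing the strip of rightmost occurrences of $1,\ldots,m_1$ leaves \emph{both} a semi--standard tableau and a lattice word, and that the rows involved are strictly increasing as required by the definition of a string. This requires a somewhat delicate reading of the lattice condition at the positions of those rightmost boxes (and of the boxes immediately to their left and below them). Everything else is bookkeeping: translating ``complete sequence of strings'' into ``iterated shuffle factorization of the reading word'' and invoking the shuffle description of lattice words quoted above, then induction.
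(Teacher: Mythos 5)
The paper does not actually prove Proposition \ref{P:css}: it is quoted verbatim as Theorem~5 of \cite{az} (``we refer to it for proofs and further details''), so there is no internal argument to compare yours against. Judged on its own terms, your plan — peel off a string corresponding to the rightmost occurrences of $1,\ldots,m_1$, show the remainder is again an LR tableau of content $(m_2,\ldots,m_s)'$, and for the converse observe that removing $st(S_{m_1})$ exhibits the reading word as a shuffle of $12\cdots m_1$ with a lattice word — is a sound route and it can be completed. The converse direction in particular is clean: the removed boxes are the rightmost boxes of rows $y_1<\cdots<y_{m_1}$ labelled $1,\ldots,m_1$, so in reading order they contribute exactly the subword $12\cdots m_1$, and a shuffle of $12\cdots m_1$ with a shuffle of $12\cdots m_2,\ldots,12\cdots m_s$ is lattice; note only that the induction should be stated for arbitrary semi--standard fillings (or you must check the shape stays skew after deleting the strip), since in this direction you cannot assume a priori that $T\setminus st(S_{m_1})$ has skew shape.

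The genuine thinness is exactly where you flag it: the extraction step is asserted as a ``standard removable strip lemma'' rather than proved, and it is the whole content of the ``only if'' direction. What has to be verified is: (a) the globally rightmost box labelled $j$ is the \emph{last box of its row} — without this, $(y_1,\ldots,y_{m_1})$ is not a string in the paper's sense and $st(S_{m_1})$ is not the set of boxes you think you are removing; (b) $y_1<\cdots<y_{m_1}$; (c) the reading word stays lattice after removal. All three follow from one elementary observation you never isolate: in any skew SSYT, every box labelled $j$ lies weakly below the row of the rightmost $j$ (if $j$ occurred at $(y',c')$ with $y'<y_j$, column--strictness along the column $c'$, which lies inside the shape between rows $y'$ and $y_j$ because $\mu$ and $\lambda$ are partitions, forces the entry at $(y_j,c')$ to exceed $j$, contradicting the weak increase along row $y_j$). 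Combined with the lattice condition read at the rightmost $j$ (resp.\ at a hypothetical larger entry to its right), this gives (a) and (b), and it gives (c) because any prefix of the reading word that contains the rightmost $i$ but not the rightmost $i+1$ contains no $(i+1)$ at all, so deleting one box of each value $1,\ldots,m_1$ cannot create a lattice violation. Finally, your ``in particular'' clause does not need an exchange argument or a citation: the same greedy choice yields it for free, since after deleting the rightmost $j$ the new rightmost $j$ lies weakly below it, so the successive strings produced by this recipe automatically satisfy $S_{m_1}\leq S_{m_2}\leq\cdots\leq S_{m_s}$. As submitted, then, the proposal is a correct strategy with the decisive lemma left unproved (and the last clause outsourced to \cite{az}); with the above checks written out it becomes a complete proof.
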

\qed

\subsection{Schur functions, skew Schur functions and multiplicity--free classification.}

Let $\Lambda$ denote the ring of symmetric functions in the variables $x = (x_1,x_2,\ldots)$
over $\mathbb{Q}$, say. The Schur functions $s_{\lambda}$ form an orthonormal basis for $\Lambda$  \cite{stanley}, with respect to the Hall inner product, and may be defined in terms
of SSYT by
\begin{equation}\label{shur}
s_{\lambda}=\sum_{T}x^T\in\Lambda,\end{equation}
where the sum is over all SSYT of shape $\lambda$ and  $x^T$ denotes the monomial $$x_1^{\#1's\;in \;T}x_2^{\#2's\;in \;T}\cdots.$$
Replacing $\lambda$ by $\lambda/\mu$ in \eqref{shur} gives the definition
of the skew Schur function $s_{\lambda/\mu}\in\Lambda$, where now the sum is over all SSYT of shape
$\lambda/\mu$. For instance, the SSYT shown in the Example \ref{exemp} above contributes with the monomial $x_1^4x_2^2x_3^2x_4^1$ to $s_{43321/211}$.

The product of two Schur functions $s_{\mu}$ and $s_{\nu}$ can be written as a positive linear combination of Schur functions by the {\it Littlewood--Richardson
rule} which states
$$s_{\mu}s_{\nu}=\sum_{\lambda}c_{\mu\nu}^{\lambda}s_{\lambda},$$
where the {\it Littlewood--Richardson coefficient} $c_{\mu\nu}^{\lambda}$ is the number of LR tableaux with shape $\lambda/\mu$ and content $\nu$ \cite{lr}.
The Littlewood--Richardson coefficients can also be used to expand skew Schur functions $s_{\lambda/\mu}$ in terms of Schur functions:
$$s_{\lambda/\mu}=\sum_{\nu}c_{\mu\nu}^{\lambda}s_{\nu}.$$
If $c_{\mu\nu}^{\lambda}$ is 0 or 1  for all $\lambda$ (resp. all $\nu$), then we say that the product of Schur functions $s_{\mu}s_{\nu}$ (resp. the skew Schur function $s_{\lambda/\mu}$) is {\em multiplicity--free}.

The Littlewood--Richardson coefficients satisfy a number of symmetry properties \cite{stanley}, including:
\begin{equation}\label{shurP1}
c_{\mu\nu}^{\lambda}=c_{\nu\mu}^{\lambda}\quad\text{and}\quad c_{\mu\nu}^{\lambda}=c_{\mu'\nu'}^{\lambda'}.
\end{equation}
Moreover, we have
\begin{equation}\label{shurP2}
s_{\lambda}=s_{\lambda^{\pi}}\quad\text{and}\quad s_{\lambda/\mu}=s_{(\lambda/\mu)^{\pi}}.
\end{equation}
Another useful fact about skew Schur functions is that
$$s_{\lambda/\mu}=s_{{\tilde\lambda}/{\tilde\mu}},$$
where ${\tilde\lambda}/{\tilde\mu}$ is the skew Young diagram obtained from $\lambda/\mu$ by deleting any empty row and any empty column. A  skew Schur function without empty rows or empty columns is said to be {\it basic} \cite{DTK}. Therefore, the previous identity allows each skew Schur function to be expressed as a basic skew Schur function.

If $\lambda/\mu$ is not connected, and consists of two components $A$ and $B$,  and may themselves be either Young diagrams or skew Young diagrams, then the  combinatorial definition of (skew) Schur function \eqref{shur} gives (\cite{stanley,DTK})
$$s_{\lambda/\mu}=s_{A}s_{B}=s_Bs_A.$$

\begin{example}
If $\lambda/\mu=5522/421={\tiny\young(::::\hfill,::\hfill\hfill\hfill,:\hfill,\hfill\hfill)}$,  we have the disconnected components
$$
A={\tiny\young(:\hfill,\hfill\hfill)}\,\text{ and }\,B=\tiny\young(::\hfill,\hfill\hfill\hfill).$$
Therefore, $s_{5522/421}=s_{22/1}s_{33/2}$.
\end{example}

Any product $s_{A}s_{B}$ of skew Schur functions $s_{A}$ and $s_{B}$ is again a skew Schur function,
as  the figure below  makes evident,

\begin{tikzpicture}[line cap=round,line join=round,>=triangle 45,x=1.0cm,y=1.0cm]
\clip(-2.3,0) rectangle (4.26,3);
\fill[color=blue!20,fill opacity=0.8] (0,0.92) -- (0.36,0.92) -- (0.36,1.54) -- (1.58,1.54) -- (1.58,1.96) -- (2.04,1.96) -- (2.04,2.74) -- (0,2.74) -- cycle;
\draw  (0,0.08)-- (1,0.08);
\draw  (1,0.08)-- (1,0.64);
\draw  (1,0.64)-- (1.58,0.64);
\draw  (1.58,0.64)-- (1.58,1.54);
\draw  (1.58,1.54)-- (0.36,1.54);
\draw  (0.36,1.54)-- (0.36,0.92);
\draw  (0.36,0.92)-- (0,0.92);
\draw  (0,0.92)-- (0,0.08);
\draw  (1.58,1.54)-- (2.54,1.54);
\draw  (2.54,1.54)-- (2.54,1.88);
\draw  (2.54,1.88)-- (3.08,1.88);
\draw  (3.08,1.88)-- (3.08,2.2);
\draw  (3.08,2.2)-- (3.62,2.2);
\draw  (3.62,2.2)-- (3.62,2.74);
\draw  (3.62,2.74)-- (2.04,2.74);
\draw  (2.04,2.74)-- (2.04,1.96);
\draw  (2.04,1.96)-- (1.58,1.96);
\draw  (1.58,1.96)-- (1.58,1.54);
\draw (2.62,2.24) node {$A$};
\draw (0.8,0.96) node {$B$};

\draw  (0,0.92)-- (0.36,0.92);
\draw  (0.36,0.92)-- (0.36,1.54);
\draw  (0.36,1.54)-- (1.58,1.54);
\draw  (1.58,1.54)-- (1.58,1.96);
\draw  (1.58,1.96)-- (2.04,1.96);
\draw  (2.04,1.96)-- (2.04,2.74);
\draw  (2.04,2.74)-- (0,2.74);
\draw  (0,2.74)-- (0,0.92);
\end{tikzpicture}.

In particular, a product of Schur functions $s_\mu s_\nu$ may be seen as a skew Schur function with $A=\mu$ and $B=\nu$ in the previous picture.

For the following characterisation of the basic multiplicity--free skew Schur function, jointly due to  Gutschwager and to
Thomas and Yong, we follow  \cite{DTK}.

\begin{theorem} [Gutschwager~\cite{gut}, Thomas and Yong~\cite{thomas}]\label{schurfree}
The basic skew Schur function
$s_{\lambda/\mu}$ is multiplicity--free if and only if at least one of the following is true:
\begin{itemize}
\item[$R0$] $\mu$ or $\lambda^*$ is the zero partition $0$;
\item[$R1$] $\mu$ or $\lambda^*$ is a rectangle of $m^n$--shortness 1;
\item[$R2$] $\mu$ is a rectangle of $m^n$--shortness 2 and $\lambda^*$ is a fat hook (or vice versa);
\item[$R3$] $\mu$ is a rectangle and $\lambda^*$ is a fat hook of $m^n$--shortness 1 (or vice versa);
\item[$R4$] $\mu$ and $\lambda^*$ are rectangles;
\end{itemize}
where $\lambda^*$ is the $m^n$--complement of $\lambda$ with $m = \lambda_1$ and $n =\lambda'_1$.
\end{theorem}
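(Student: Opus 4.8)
The plan is to reduce the skew question to a bounded product question by complementation in a box, and then to settle the resulting case analysis with the complete sequences of strings of Proposition~\ref{P:css}. Write $m=\lambda_1$, $n=\lambda'_1$ and put $\xi:=(\lambda^*)^\pi$, a partition contained in $m^n$ (see Subsection~2.1). First I would invoke the standard $m^n$--complementation of Littlewood--Richardson coefficients: for a triple of partitions inside $m^n$ of total size $mn$ there is an $\mathfrak S_3$--symmetric coefficient $c_{\alpha\beta\gamma}$, normalised by $c_{\alpha\beta\gamma}=c_{\alpha\beta}^{\gamma^{c}}$ where $\gamma^{c}$ is the $m^n$--complement of $\gamma$, and one checks directly that $\xi^{c}=\lambda$, so that
\[
c_{\mu\nu}^{\lambda}=c_{\mu\nu\xi}.
\]
Hence $s_{\lambda/\mu}$ is multiplicity--free exactly when $c_{\mu\nu\xi}\le 1$ for every $\nu$, a condition symmetric under interchanging $\mu$ and $\xi$. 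As $\pi$--rotation carries $\xi$ back to $\lambda^*$ and preserves being a rectangle, being a fat hook, and the $m^n$--shortness, this is precisely the $\mu\leftrightarrow\lambda^*$ symmetry of the list R0--R4; and, again by $\mathfrak S_3$--symmetry, the same condition says that $s_\mu s_\xi$ has no coefficient exceeding $1$ on Schur functions indexed by partitions contained in $m^n$ --- a bounded form of Stembridge's classification of multiplicity--free products, which is why the list mirrors his. Throughout, $c_{\mu\nu}^{\lambda}$ is the number of semistandard fillings of $\lambda/\mu$ of content $\nu$ carrying a complete sequence of strings $S_{m_1}\le\cdots\le S_{m_s}$, so ``$c_{\mu\nu}^{\lambda}\ge 2$'' means two distinct such fillings of the same shape and content.

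For the ``if'' direction I would verify R0--R4 one at a time, in each case by establishing that the relevant LR fillings are essentially rigid. Case R0 is degenerate: $\mu=0$ gives $s_{\lambda/\mu}=s_\lambda$, while $\lambda^*=0$ forces $\lambda=m^n$, whence $s_{\lambda/\mu}=s_{m^n/\mu}$ is again a single Schur function. In case R4 ($\mu$ and $\lambda^*$ both rectangles) the reduction above turns $s_{\lambda/\mu}$ into a product of two rectangular Schur functions, which is classically multiplicity--free; combinatorially, the lattice condition forces the top row of any filling to be constant $1$ and the remaining entries then cascade, so the complete sequence of strings is uniquely determined. Cases R1 and R3 are the ``one free box'' refinements of R4 --- $\mu$ a rectangle of $m^n$--shortness $1$ (R1), or $\mu$ a rectangle while $\lambda^*$ is a fat hook of shortness $1$ (R3) --- and here the same cascade leaves a single undetermined step, which is then pinned down by column strictness together with the inequalities $S_{m_j}\le S_{m_{j+1}}$. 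Case R2 ($\mu$ a rectangle of shortness $2$ and $\lambda^*$ a fat hook, or vice versa) is the only one that is not automatic: the filling is genuinely not rigid, and the point to prove is that the sole remaining freedom is the position of a single movable string inside the fat--hook region, which is itself forced by the two part lengths of $\xi$, so again at most one complete sequence survives. (Alternatively, all of R0--R4 can be imported from Stembridge's theorem via the first paragraph, once one checks that in the configurations of the list the partitions occurring with multiplicity $\ge 2$ in $s_\mu s_\xi$ do fit inside $m^n$.)

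For the ``only if'' direction the plan has two ingredients: an inheritance lemma and a finite stock of multiplicity--two witnesses. For the inheritance lemma I would show that deleting an outermost row or column of $\lambda$ --- shrinking $\mu$ accordingly if it meets that row or column, then passing to the basic shape --- yields a smaller skew shape all of whose Littlewood--Richardson coefficients occur among the $c_{\mu\nu}^{\lambda}$, so multiplicity--freeness is inherited downwards; by the complementation above the same holds for the analogous shrinkings of $\lambda^*$. Starting from a pair $(\mu,\lambda^*)$ satisfying none of R0--R4 and repeatedly shrinking while remaining outside R0--R4, one reaches a minimal forbidden pair, and these form a short list up to conjugation and the $\mu\leftrightarrow\lambda^*$ swap: for example, neither $\mu$ nor $\lambda^*$ a rectangle (smallest instance $\mu=\lambda^*=(2,1)$, where already $c_{21,21}^{321}=2$); $\mu$ a rectangle of $m^n$--shortness $2$ with $\lambda^*$ having at least three distinct part sizes (smallest instance $\mu=2^2$, $\lambda^*=(3,2,1)$); and $\mu$ a rectangle of shortness $\ge 3$ with $\lambda^*$ a fat hook of shortness $\ge 2$. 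For each minimal pair one exhibits explicitly two distinct LR fillings of the corresponding bounded skew shape; this finite check, pushed back up by the inheritance lemma, finishes the argument.

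The step I expect to be the main obstacle is the combinatorics of the ``only if'' direction: pinning down the exact list of minimal forbidden pairs, producing a uniform construction of a multiplicity--two witness for each, and --- most delicately --- arranging the row and column deletions so that every intermediate shape remains basic and remains outside R0--R4, since otherwise the reduction stalls before a minimal pair is reached. On the ``if'' side, the one place needing a genuine argument rather than a forced cascade is case R2, where one must exclude the possibility that the fat--hook region hides two or more independent binary choices.
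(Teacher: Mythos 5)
There is nothing in the paper to compare your argument with: Theorem~\ref{schurfree} is not proved in this paper at all, but is imported from Gutschwager~\cite{gut} and Thomas--Yong~\cite{thomas}, with the statement following the presentation of~\cite{DTK}. Judged on its own, your opening reduction is sound: writing $\xi=(\lambda^*)^\pi$ and invoking the $\mathfrak S_3$--symmetric triple coefficient (equivalently, the Grassmannian intersection number) to get $c_{\mu\nu}^{\lambda}=c_{\mu\nu\xi}$ is standard, it does account for the $\mu\leftrightarrow\lambda^*$ symmetry of R0--R4, and your witness $c_{(2,1)\,(2,1)}^{(3,2,1)}=2$ (i.e.\ $s_{(3,2,1)/(2,1)}=s_1^3$) is correct.

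The genuine gaps are in both directions of the equivalence. For the ``if'' direction, the rigidity claims for R1--R4 are assertions rather than arguments: even in R4 the multiplicity--freeness of a product of two rectangles needs a proof, and in R1--R3 the shortness hypotheses are exactly where this theorem departs from Stembridge's classification~\cite{Stem} --- a rectangle of $m^n$--shortness $1$ or $2$ need not be a one-- or two--line rectangle, and a fat hook of shortness $1$ need not be a near rectangle --- so the unbounded product $s_\mu s_\xi$ may well have multiplicities, and the whole content is to show those occur only for partitions not contained in $m^n$. Your parenthetical ``import from Stembridge after checking this containment'' names the hard step but does not carry it out, and the ``single movable string'' analysis in R2 is not substantiated. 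For the ``only if'' direction, the inheritance lemma (deleting an outermost row or column of $\lambda$, or of $\lambda^*$, yields a shape all of whose LR coefficients occur among the $c_{\mu\nu}^{\lambda}$) is plausible but unproven, and the assertion that the minimal forbidden pairs reduce to your short list is precisely the delicate classification that constitutes the theorem; you flag it as the main obstacle, which is honest, but it means the proposal is a plan rather than a proof. Within this paper the correct course is what the authors do: cite \cite{gut,thomas}.
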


In particular, for partitions $\mu$ and $\nu$, the
product $s_{\mu}s_{\nu}$ of Schur functions is a skew Schur function, and we get the following characterisation of the multiplicity--free product of skew--Schur functions, due to Stembridge, as a corollary of the above theorem.

\begin{corollary}[Stembridge~\cite{Stem}]\label{stemp}
The Schur function product $s_{\mu}s_{\nu}$ is multiplicity--free if and only if at least one of the following is true:
\begin{itemize}
\item[$P0$] $\mu$ or $\nu$ is the zero partition $0$;
\item[$P1$] $\mu$ or $\nu$ is a one--line rectangle;
\item[$P2$] $\mu$ is a two--line rectangle and $\nu$ is a fat hook (or vice versa);
\item[$P3$] $\mu$ is a rectangle and $\nu$ is a near rectangle (or vice versa);
\item[$P4$] $\mu$ and $\nu$ are rectangles.
\end{itemize}
\end{corollary}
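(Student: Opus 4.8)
The plan is to exhibit $s_\mu s_\nu$ as a \emph{basic} skew Schur function and then specialise Theorem~\ref{schurfree}. If $\mu=0$ or $\nu=0$ then $s_\mu s_\nu$ is a single Schur function, hence multiplicity--free, and $P0$ holds; so assume $\mu,\nu\neq 0$ and put $p=\ell(\mu)$, $q=\ell(\nu)$, $n=p+q$. Since $n=\ell(\nu^\pi)+\ell(\mu)$, the product construction of Subsection~2.1 --- using the rule $s_{\lambda/\mu}=s_As_B$ for a disconnected diagram with components $A,B$, and $s_{\nu^\pi}=s_\nu$ --- gives $s_\mu s_\nu=s_{\Lambda/M}$ with $\Lambda/M=\nu^\pi\bullet_n\mu$, the disconnected diagram whose components are $\mu$ and $\nu^\pi$. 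A quick check shows $\Lambda/M$ has no empty row or column, hence is basic, and that (by the definition of $\bullet_n$) its inner partition is $M=(\nu^*)^\pi$ while its $(\mu_1+\nu_1)^n$--complement is $(\mu^*)^\pi$, where $\mu^*=\mu_1^n/\mu$, $\nu^*=\nu_1^n/\nu$ and $\Lambda_1=\mu_1+\nu_1$, $\Lambda'_1=n$.

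By Theorem~\ref{schurfree} applied to $s_{\Lambda/M}$, $s_\mu s_\nu$ is multiplicity--free if and only if one of $R0$--$R4$ holds with inner partition $(\nu^*)^\pi$ and outer complement $(\mu^*)^\pi$, all shortnesses computed in the rectangle $(\mu_1+\nu_1)^n$. To pass to $P0$--$P4$ I would prove the following dictionary for a non-zero $\rho\in\{\mu,\nu\}$: $(\rho^*)^\pi$ has the same number of distinct part sizes as $\rho$, so it is a rectangle (resp.\ a fat hook) exactly when $\rho$ is; and, by reading off the SW--NE lattice path bounding $(\rho^*)^\pi$ inside $(\mu_1+\nu_1)^n$ --- whose segment lengths are explicit in the part sizes and multiplicities of $\rho$, the only contribution of the other partition being the final horizontal segment, of length $\nu_1$ if $\rho=\mu$ and $\mu_1$ if $\rho=\nu$ --- one finds that a rectangle $(\rho^*)^\pi$ has $(\mu_1+\nu_1)^n$--shortness $\min\{\mu_1,\ell(\mu),\nu_1,\ell(\nu)\}$, while a fat hook $(\rho^*)^\pi$ has shortness $1$ exactly when $\rho$ is a near rectangle.

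Granting the dictionary, the conversion is routine. Since $\ell(\nu),\ell(\mu)<n$, the condition $R0$ is impossible once $\mu,\nu\neq0$, which matches the already-disposed case $P0$; $R1$ reads ``$\mu$ or $\nu$ is a one--line rectangle'' $=P1$ (a rectangle has shortness $1$ iff $\min\{\mu_1,\ell(\mu),\nu_1,\ell(\nu)\}=1$); and $R4$ reads ``$\mu$ and $\nu$ are rectangles'' $=P4$. Because $R2,R3,P2,P3$ all include an ``or vice versa'', the disjunction $R2\vee R3$ over both orderings translates --- via the dictionary and a short case analysis absorbing the harmless overlaps (a genuine fat hook may simultaneously be a near rectangle; a one--line rectangle is a degenerate two--line rectangle) --- into ``one of $\mu,\nu$ is a two--line rectangle and the other a fat hook, or one is a rectangle and the other a near rectangle'' $=P2\vee P3$. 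Hence $R0\vee\cdots\vee R4\Leftrightarrow P0\vee\cdots\vee P4$, which is the assertion.

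The step I expect to be the real obstacle is the last clause of the dictionary --- identifying ``$(\rho^*)^\pi$ a fat hook of $(\mu_1+\nu_1)^n$--shortness $1$'' with ``$\rho$ a near rectangle'' --- together with checking that enlarging the bounding rectangle to $(\mu_1+\nu_1)^n$ never produces a straight segment shorter than the one intrinsic to $\rho$, apart from the degenerate cases already forcing $P1$. Everything else is careful lattice--path bookkeeping.
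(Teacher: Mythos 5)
Your proposal is correct and coincides with the paper's (implicit) derivation: the paper obtains Stembridge's classification precisely by viewing $s_{\mu}s_{\nu}$ as a basic skew Schur function (the disconnected diagram with components $\mu$ and $\nu^{\pi}$) and specializing Theorem~\ref{schurfree}, and your lattice--path dictionary just supplies the routine translation of $R0$--$R4$ into $P0$--$P4$ that the paper, following \cite{DTK}, leaves unstated. One cosmetic slip: the embedding contributes \emph{two} foreign segments to the bounding path, a horizontal one of length $\nu_1$ (resp.\ $\mu_1$) and also a vertical one of length $\ell(\nu)$ (resp.\ $\ell(\mu)$); your rectangle--shortness formula $\min\{\mu_1,\ell(\mu),\nu_1,\ell(\nu)\}$ already accounts for both, and in the fat--hook case these extra segments force shortness $1$ only when the other partition is one--line, i.e.\ exactly the degenerate cases you already divert to $P1$.
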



\section{The Schur interval}
\subsection{Skew Schur function support}
Given partitions $\mu\subseteq \lambda$, let $A$ denote the skew--diagram $\lambda/\mu$. We associate to the skew--diagram $A$ two partitions: $rows(A)$ obtained by sorting the row lengths of $A$ into weakly decreasing order, and similarly $cols(A)$  by sorting column lengths \cite{az,Mn}.
It is known
 that  $cols(A)\preceq rows(A)'$ \cite{LV,lam,zaballa,az,Mn}. For abbreviation, we write
 ${\ww}:=cols(A)$ and ${\nn}:=rows(A)'$. (When there is danger of confusion we write respectively $\ww(A)$ and $\nn(A)$.) If $A$ consists of two disconnected partitions $\phi$ and $\theta$ then $\ww=\phi\cup \theta$ and $\nn=\phi+\theta$.
\begin{definition} The interval  $[\bf{w},\bf{n}]=\{\nu\in P_{|A|}:\ww\preceq\nu\preceq\nn\}$ is called the {\em Schur  interval} of $A$.
\end{definition}

 The Schur interval of $A$ and $A^\pi$ is the same, and  due to the equivalence $\ww\preceq\nu\preceq \nn$ if and only if $\nn'\preceq\nu'\preceq\ww'$, the Schur interval of $A'$ is $[\nn',\ww']$.

 \label{deflambda1}Suppose $\nn=(n_1,\ldots,n_s)$. We may decompose $A=\lambda/\mu$ into a
sequence of $n_i$--vertical strips, for all $i$, as follows:

$1.$ First  consider the  $n_1$--vertical strip $V_1$ formed by the rightmost box of each row in
$A$, and let $A\setminus V_1$ be the skew diagram obtained by
removing that strip, described in partitions with $\lambda^1/\mu$.

$2.$ Repeat the previous step  with the diagram $A\setminus V_1$.

Let $V=(V_1,\ldots,V_s)$ denote the sequence of vertical strips
obtained by the previous process, called the {\em $V$--sequence of $A$}.  To construct the strip $V_i$  we  subtract the rightmost box of each row in $\lambda^{i-1}/\mu$, and to describe in partitions the new skew shape, we put  $A\setminus(V_1\cup V_2\cup\cdots\cup V_i)=\lambda^i/\mu$, for $i=1,\dots, s-1$, with  $\lambda^0:=\lambda$. This means that each entry of $\nn$ is obtained by successively pushing up some boxes in each entry of $\ww$:  $n_1$ is the number of non empty rows of $A$; $n_2$ is the number of rows of $A$ of length at least $2$, $\dots$, and $n_s$ is the number of rows of length $s$. Hence, $n_s$ is the number of rows of the strip $V_s=A\setminus(V_1\cup\cdots \cup V_{s-1})$, consisting of   the leftmost boxes  in each row of $A$ with longest length
  $s=\ell(\nn)$. Each vertical strip $V_i$ intersects the rows of $A$ with longest length $s$, and, therefore, $\ell(\ww)\ge\ell(\nn)$.

We shall denote  the minimum and the maximum of $\sup(\lambda^i/\mu)$ respectively by $\ww^i$ and $\nn^i$, for $i=1,\dots,s-1$.
If $\ww=(w_1,\ldots,w_r)$, then $r=\ell(\ww)$ is the number of non empty columns of $A$, and $w_1$ is the length of the longest column of $A$.

\begin{example}
    Consider the skew diagram $A=\tiny\young(::\hfill\hfill\hfill,:\hfill\hfill\hfill,\hfill\hfill\hfill,\hfill)$,
    with maximal filling $\nn=(4,3,3)$. The $V_1$ strip of $A$ is $\tiny\young(::::\hfill,:::\hfill,::\hfill,\hfill)$ and $A\setminus V_1=\tiny\young(::\hfill\hfill,:\hfill\hfill,\hfill\hfill)$. Next, we consider the $V_2$ strip $\tiny\young(::\hfill,:\hfill,\hfill)$ and $A\setminus(V_1\cup V_2)=\tiny\young(::\hfill,:\hfill,\hfill)$ coincides with the $V_3$ strip.
\end{example}

\begin{definition} Given the skew--diagram $A$,  we define the support
 $supp(A)$ of $A$, or of $s_A$, to be the set of those partitions $\nu'$ for which $s_\nu$
appears with nonzero coefficient when we expand $s_A$ in terms of Schur functions. Equivalently,
$$supp(A)=\{\nu': c_{\mu\nu}^{\lambda}>0\}.$$
\end{definition}

Notice that we have defined the support of $A$ in terms of the conjugate of the contents of the LR fillings of $A$.
 From \cite{LV,lam,zaballa,az,Mn} we know
 that $\nu\in supp(A)$  only if $cols(A)\preceq\nu\preceq rows(A)'$
 and therefore $\sup(A)\subseteq[\bf{w},\bf{n}]$.
 Thanks to the rotation symmetry \eqref{shurP2}, the support of a skew diagram $A$ equals the support of $(A)^{\pi}$. Also, by the conjugation symmetry of the Littlewood--Richardson coefficients \eqref{shurP1} and  the equivalence $\lambda\preceq \mu\Leftrightarrow\mu'\preceq\lambda'$, we know that $\nu\in  \sup(A)$  if and only if the  $\nu'\in\sup(A')$.

Moreover it is known that
$\bf{w}$ and $\bf{n}$ are in  $\sup(A)$ and
the coefficients $c_{\mu,\bf{w}'}^{\lambda}$ and
$c_{\mu,\bf{n}'}^{\lambda}$ are both equal to 1 (see \cite{az,Mn}).
The only LR tableau with shape $A$ and content $\bf{w}'$ is
obtained  by filling the boxes of each column, from top to bottom, with the
integers $1,2,\ldots$. To describe the only LR tableau with shape $A$ and
content $\bf{n}'$,
let $V=(V_1,\ldots,V_s)$ be the $V$--sequence of $A$. The LR tableau with shape
$A$ and content $\bf{n}'$ is obtained by filling each
vertical strip $V_i$ with the integers $1,\ldots,n_i$, $i=1,\ldots,s$.

Although $\ww'$ and $\nn'$ are respectively the most and the least dominant LR filling contents of  $A$, since $\ww$ and $\nn$ are the minimum and maximum of the
$supp(A)$, we will refer to the corresponding LR fillings of $A$ as the minimum and maximum ones. The reason for this terminology   comes from the fact that the lattice words with those contents $\ww'$ and $\nn'$ are respectively shuffles of the words $12\dots w_i$ and $12\dots n_i$, $i\ge 1$, and the partitions defined by their lengths  satisfy $\ww=(w_1,\ldots w_r)\preceq \nn=(n_1,\ldots,n_s)$.

\begin{example}\label{minimum}  The LR fillings of $A=5442211/331$   with the   least and most dominant conjugate contents respectively ${\ww}=cols(A)$ and ${\nn}=rows(A)'$, are
$$\begin{array}{ccccc}
\tiny\young(:::11,:::2,:113,12,23,3,4)&&&&
\tiny\young(:::11,:::2,:123,34,45,6,7)\cr
\end{array},$$

\noindent where $\ww=(4,3,3,1,1)\preceq\nn=(7,4,1)$. The lattice word of content $\ww'$ is a shuffle of the words $1234,\,123,\,123,1,1$ with lengths given by $\ww$; and the lattice word of content $\nn'$ is a shuffle of the words $1234567,\,1234,\,1$ with lengths given by $\nn$.
\end{example}

The example below shows that in general $\sup(A)\subsetneqq[\ww,\nn]$.
\begin{example}
(1) The support of  $A=\tiny\young(::\hfill\hfill,\hfill\hfill\hfill\hfill,\hfill\hfill\hfill)$ is only the set $\{\ww=3222,\nn=3321\}$ and therefore
$s_A=s_{441}+s_{432}$. In this case, $supp(A)=\{\ww=3222,\nn=3321\}=[\ww,\nn]$.

(2) The Schur interval of   $A=\tiny\young(:\hfill\hfill\hfill,\hfill,\hfill)$ is the chain $\ww=2111\prec 221\prec\nn=311$ while  $\sup(A)=\{21^3,31^2\}\subsetneqq[\ww,\nn]$.

\end{example}

\subsection{An algorithm to construct LR tableaux. }
\label{subsec:alg}
The next algorithm  provides a procedure to construct systematically all partitions in $\sup(\lambda/\mu)\cap[\bf{w},\bf{n}]$. Along the process, all LR tableaux of shape $\lambda/\mu$ are also exhibited.
We remark that the algorithm is essentially a rephrasing of Lemma 3 and 4 and Algorithm 4 in \S 4 of~\cite{az}, so we refer there for further details and proofs.

\begin{algorithm}
\begin{procedure} \label{Proc:alg}
\qquad \\
\emph{Input of the procedure:} an LR tableau $T$ of shape $\lambda/\mu$ and content $m=(m_1,m_2,\ldots,m_s)^{\prime}$ (therefore it admits a complete sequence of strings $\left(S_{m_{1}}, S_{m_{2}}, \ldots  S_{m_{s}}\right)$ since Proposition~\ref{P:css}).

\vskip 1.5mm

If for all $j=1, \ldots, s \; st\left(S_{m_{j}}\right)$  intersects all rows of the tableau
$T\setminus \{\bigcup_{k=1}^{j-1} st\left(S_{m_{k}}\right)\}$ then
\emph{Output of the procedure:} $T$ (i.e. the procedure does nothing) \\

\vskip 1.5mm

else

\vskip 1.5mm

\qquad Begin

\vskip 1.5mm

\qquad \qquad $t:=\min\{j=1, \ldots, s \text{ s. t. } st\left(S_{m_{j}}\right) \text{ does not achieve all rows of the}$

\qquad \qquad $\text{tableau } T\setminus \{\bigcup_{k=1}^{j-1} st\left(S_{m_{k}}\right)\}\}$.

\vskip 1.5mm

\qquad \qquad $t_{1}:=\min\{j \text{ s.t. the row }j \text{ in the tableau }
T\setminus \{\bigcup_{k=1}^{t-1} st\left(S_{m_{k}}\right)\} \text{ is not}$

\qquad \qquad $\text{ achieved by }
st\left(S_{m_{t}}\right)\}$.

\vskip 1.5mm

\qquad \qquad $X:=$ set of boxes made of the rightmost (with respect to the tableau

\qquad \qquad $T\setminus \{\bigcup_{k=1}^{t-1} st\left(S_{m_{k}}\right)\}$) box in rows
$\left(t_{1} \bigcup \{j > t_{1} \text{ s.t. } j \in S_{m_{t}}\}\right)$

\qquad \qquad (i.e. $X:=$ rightmost (with respect to the tableau
$T\setminus \{\bigcup_{k=1}^{t-1} st\left(S_{m_{k}}\right)\}$) box in

\qquad \qquad row $t_{1} \; \bigcup \left(st\left(S_{m_{t}}\right) \text{ below row } t_{1} \right)$).

\vskip 1.5mm

\qquad \qquad \emph{Output of the procedure:} $T_{1}:=$ tableau obtained by $T$ increasing by one

\qquad \qquad the filling of the set $X$.

\vskip 1.5mm

\qquad End

\end{procedure}

\vskip 2mm

\emph{Input of the algorithm:} $\mu\subseteq\lambda$.

\vskip 1.5mm

$\bf{n}:=\left(rows(\lambda/\mu)\right)^{\prime}$.

\vskip 1.5mm

$\bf{w}:=cols(\lambda/\mu)$.

\vskip 1.5mm

$n:=$ number of columns of $\lambda/\mu$.

\vskip 1.5mm

for $i=0,1, \ldots ,n-1$ do $(\lambda/\mu)^{n-i}:=$ the skew diagram defined by the $n-i,n-i+1,\ldots,n$ columns of $\lambda/\mu$.

\vskip 1.5mm

$T^{\{0\}}:=$ the LR tableau with shape $\lambda/\mu$ and content $\bf w'$.

\vskip 1.5mm

$T^{[n]}:=$ the LR tableau of shape $(\lambda/\mu)^n$.

\vskip 1.5mm


$i:=0$.

\vskip 1.5mm

Repeat

\vskip 1.5mm

\qquad Begin

\vskip 1.5mm

To each LR tableau $T\in T^{[n-i]}$, adjoin to the leftmost column of $T$
the $(n-i-1)$-th column of $T^{\{0\}}$ such that the LR tableau obtained is of shape $(\lambda/\mu)^{n-i-1}$.

\vskip 1.5mm

Apply the Procedure to construct all LR tableaux of shape $(\lambda/\mu)^{n-i-1}$ containing $T\in T^{[n-i]}$,
and denote this set by $T^{[n-i-1]}$.

\vskip 1.5mm

Add the remaining columns of $T^{\{0\}}$ to each LR tableau $T^{[n-i-1]}$, obtaining a set, denoted by
$T^{\{i+1\}}$, of LR tableaux of shape $\lambda/\mu$.

\vskip 1.5mm

\emph{Output of the algorithm:} set $T^{\{i+1\}}$.

\vskip 1.5mm

$i:=i+1$.

\vskip 1.5mm

\qquad End

\vskip 1.5mm

until $i=n$.
\end{algorithm}

This algorithm produces a sequence of sets of LR tableaux of shape $\lambda/\mu$
\begin{equation} T^{\{0\}}\subseteq T^{\{1\}}\subseteq T^{\{2\}}\subseteq\cdots\subseteq T^{\{n\}},\label{result:alg}
\end{equation}
\noindent such that if $G$ is  in  $T^{\{i\}}$ with conjugate content $\gamma$, and $B$ is in  $T^{\{i-1\}}$ with conjugate content $\beta$ then $\beta\preceq \gamma$, for all $i=0,\dots,n$.

\begin{example}
To make things clear we present here some instances of application of the Procedure~\ref{Proc:alg}.

$$\young(:11,:22,133,24,5) \longrightarrow \young(:11,:22,133,44,5)$$

\vskip 1.5mm

$$\young(:11,122,23,34,5) \longrightarrow \young(:11,122,33,44,5)$$

\vskip 1.5mm

$$\young(:::1,:11,122,2) \longrightarrow \young(:::1,:12,123,2)$$

Note that in the first two instances the conjugate content of the output covers in the dominance order the conjugate content of the input, whereas in the third one it does not.
\end{example}

As an easy consequence of the algorithm above, we exhibit
 a chain in $supp(A)$, with respect to the dominance order, that goes from  $\ww=(w_1,\ldots,w_r)$ to $\nn=(n_1,\ldots,n_s)$. Start with the minimum LR filling of $A$, that is, the only filling of $A$ with content $\ww'$.  If one fills the vertical strip $V_1$ with $12\dots n_1$ then  $\lambda^1/\mu$ has the minimum LR filling, and one gets an LR filling of $A$ with conjugate content
  $$\sigma^1:=(n_1)\cup\ww^1,$$
 \noindent
 where $\ww^1$ is the minimum of the
$supp(\lambda^1/\mu)$.
 From Algorithm \ref{Proc:alg} one knows that $\ww\preceq\sigma^1\preceq\nn$. (It is worth noting that
 the partition $\sigma^1$ is obtained by subtraction from the
entries $2,\ldots, r$ of $\ww$, and adding those nonnegative quantities to the first entry.
Graphically, those operations correspond to lift  the rightmost box in some of the rows of $A$,
 to the first row, and thus  one has $\ww\preceq\sigma^1$.)
One may now repeat the
above argument with the skew diagram $\lambda^1/\mu$ and we are led  to a chain
 of partitions
\begin{equation}\label{chain} \ww\preceq\sigma^1\preceq\sigma^2\preceq\cdots\preceq\sigma^{s-2}\preceq\sigma^{s-1}=\nn \end{equation}
\noindent where $\sigma^i:=(n_1,\cdots,n_i)\cup \ww^i\in\sup(\lambda/\mu)$ and  $\ww^i$ the minimum of $\sup(\lambda^i/\mu)$,  for  $1\le i\le s-1$.
One has  $\ww^{i-1}\preceq (n_i)\cup \ww^i$, for  $1\le i\le s-1$, with $\ww^0:=\ww$.

The next example illustrates this construction.

\begin{example}
    Consider the skew diagram $A$ in Example \ref{minimum} and its sequence
    $V=(V_1, V_2, V_3)$ of vertical strips. Start with   the minimum LR filling of $A$ where $\ww=(4,3,3,1,1)$,  and apply Algorithm \ref{Proc:alg} to produce LR tableaux such that: the vertical strip $V_1$ is filled with the word $1234567$; and the vertical strips $V_1$ and $V_2$ are filled with $1234567$ and $ 1234$ (hence $V_3$ is filled with $1$) respectively
$$T^{\{0\}}=\tiny\young(:::11,:::2,:113,12,23,3,4),\qquad\quad \tiny\young(:::11,:::2,:113,14,25,6,7),\qquad\quad \tiny\young(:::11,:::2,:123,34,45,6,7).$$
  \noindent  The conjugate contents are respectively: $\ww$;  $\sigma^1=(72111)=(7)\cup (2111)$, with $\ww^1=2111$ the conjugate content of the minimum LR filling of $A\setminus V_1=\tiny\young(:::\hfill,:\hfill\hfill,\hfill,\hfill)$; and
 $\nn=(741)=(74)\cup (1)$, with $\ww^2=(1)$ the conjugate content of the minimum LR filling of $A\setminus (V_1\cup V_2)$ $=V_3$ $=\tiny\young(\hfill)$. One has
 $\ww\prec\sigma^1\prec\sigma^2=\nn.$
\end{example}


\begin{definition} The skew diagrams $A$ and $B$ are said to be equal up to a block of maximal depth if for some $x\in \mathbb N$, and $n\ge \ell(A),\ell (B)$,
$A=u^\pi\bullet_n\,v$ and $B=[u+(x^n)]^{\pi}\bullet_n\,v$, with $u$ and $v$ partitions. Similarly they are said to be equal up to a block of maximal width if $A=\left(u^{\pi}\bullet_n v\right)'$ and $B=\left([u+(x^n)]^{\pi}\bullet_n v\right)'$.
\end{definition}

\begin{example}
Let $u=(3,2,2,0)$,  $v=(3,1,1,0)$ and $n=4$. Then
$$A=u^{\pi}\bullet v={\tiny\young(:::\hfill\hfill\hfill,:\hfill\hfill\hfill,:\hfill\hfill\hfill,\hfill\hfill\hfill)}\text{ and }
B=[u+(2^4)]^{\pi}\bullet v=\tiny\young(:::\hfill\hfill\hfill\hfill\hfill,:\hfill\hfill\hfill\hfill\hfill,:\hfill\hfill\hfill\hfill
\hfill,\hfill\hfill\hfill\hfill\hfill),$$
\noindent are equal up to a block of maximal depth $(2^4)$.

\end{example}

\medskip

\begin{lemma}\label{bloco}
Let $A,\, B$ be  skew diagrams, and let $u,\,v$ be partitions with $\ell(u), \ell(v)\le n$. Let $x\in\mathbb N$.
\begin{enumerate}
 \item [(a)]  If $A=u^\pi\bullet_n\,v$ and $B=[u^\pi+(x^n)]\bullet_n\,v$, then $[\ww(B),\nn(B)]=[n^x\cup \ww(A),n^x\cup\nn(A)]$. If $A=\left(u^{\pi}\bullet_n v\right)'$ and $B=\left([u^{\pi}+(x^n)]\bullet_n v\right)'$, then $[\ww(B),\nn(B)]=[x^n+ \ww(A),x^n+\nn(A)]$.

 \item [(b)] ~{\em (\cite{az}, Lemma 5)}  If $A=u^{\pi}\bullet_n v$ and $B=u^\pi+(x^n)\bullet_n v$, then $c\in\sup(B)$ if and only if $c=b\cup(n^x)$ with $b\in\sup(A)$.
If $A=\left(u^{\pi}\bullet_n v\right)'$ and $B=\left(u^\pi+(x^n)\bullet_n v\right)'$, then  $c\in\sup(B)$ if and only if   $c=b+(x^n)$ with  $b\in\sup(A)$.
\end{enumerate}
\end{lemma}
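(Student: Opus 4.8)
The plan is to reduce both parts to one concrete geometric fact. I would first unwind the definition of $\bullet_n$: padding $u,v$ to length $n$, the diagram $w^\pi\bullet_n v$ has outer shape $(w_1+v_i)_{i=1}^n$ and inner shape $(w_1-w_{n-i+1})_{i=1}^n$. Taking $w=u$ gives $A=u^\pi\bullet_n v=\tilde\lambda/\tilde\mu$ with $\tilde\lambda_i=u_1+v_i$ and $\tilde\mu_i=u_1-u_{n-i+1}$; taking $w=\hat u:=u+(x^n)$ and noting that the constant $x$ cancels in the inner shape, $B=\hat u^\pi\bullet_n v=(\tilde\lambda+(x^n))/\tilde\mu$. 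Thus $B$ has the same inner shape as $A$, and each of its $n$ rows is $x$ boxes longer. I would then check that the columns $u_1+1,\dots,u_1+x$ of $B$ meet every row --- row $i$ of $B$ occupies columns $\tilde\mu_i+1,\dots,\tilde\lambda_i+x$, and $\tilde\mu_i+1\le u_1+1$ while $\tilde\lambda_i+x\ge u_1+x$ --- so they form $x$ full columns of height $n$ whose deletion returns $A$. Equivalently, \emph{$B$ is obtained from $A$ by inserting $x$ full columns of height $n$}; this is the geometric fact, and everything else is bookkeeping.

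From this the first assertion of (a) follows at once: the multiset of column lengths of $B$ is that of $A$ together with $x$ copies of $n$, so (every part of $\ww(A)=cols(A)$ being $\le n$) $\ww(B)=n^x\cup\ww(A)$; and since each of the $n$ rows of $B$ is $x$ longer than that of $A$, the conjugate $\nn(B)=(rows(B))'$ has $\nn(B)_j=n$ for $j\le x$ and $\nn(B)_j=\nn(A)_{j-x}$ for $j>x$, i.e.\ $\nn(B)=n^x\cup\nn(A)$ (using $n\ge\nn(A)_1$). Substituting into the definition of the interval gives $[\ww(B),\nn(B)]=[n^x\cup\ww(A),\,n^x\cup\nn(A)]$. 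For the transposed statement I would run the case just proved on $A_0:=u^\pi\bullet_n v=A'$ and $B_0:=\hat u^\pi\bullet_n v=B'$ and then conjugate, invoking the recalled identity that the Schur interval of a transpose is $[\nn',\ww']$ --- so $\ww(B)=\nn(B_0)'$, $\nn(B)=\ww(B_0)'$, $\ww(A)=\nn(A_0)'$, $\nn(A)=\ww(A_0)'$ --- together with the elementary fact $(n^x\cup\gamma)'=\gamma'+(x^n)$ valid for any partition $\gamma$ all of whose parts are $\le n$ ($\gamma'$ padded to $n$ parts). Then $\ww(B)=(n^x\cup\nn(A_0))'=\nn(A_0)'+(x^n)=\ww(A)+(x^n)$, and symmetrically $\nn(B)=\nn(A)+(x^n)$.

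For (b), its first sentence is exactly Lemma~5 of~\cite{az}, which I would invoke directly. The second sentence I would obtain from the first by conjugation: keeping $A_0,B_0$ as above (now $A=A_0'$, $B=B_0'$) and using that $\nu\in\sup(C)\Leftrightarrow\nu'\in\sup(C')$ --- a consequence of the symmetry $c_{\mu'\nu'}^{\lambda'}=c_{\mu\nu}^\lambda$ recalled above --- one has $c\in\sup(B)=\sup(B_0')$ iff $c'\in\sup(B_0)$, iff $c'=b_0\cup(n^x)$ for some $b_0\in\sup(A_0)$. Applying $(\,\cdot\,)'$ and the identity $(\gamma\cup n^x)'=\gamma'+(x^n)$ --- legitimate since every part of an element of $\sup(A_0)$ is bounded by the number of nonempty rows of $A_0$, hence by $n$ --- this reads $c=b_0'+(x^n)$ with $b_0'\in\sup(A_0')=\sup(A)$, which is the claim.

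The single delicate point is the geometric identification in the first paragraph: correctly rotating $u$, performing the $\bullet_n$ gluing, and --- crucially --- tracking empty rows and columns when translating a skew diagram into the partitions $rows$ and $cols$, which is what ensures the ``$\cup$'' and ``$+$'' operations in the statement come out as written (all parts of $\ww(A)$ at most $n$, $n\ge\nn(A)_1$, and so on). Beyond that the argument is purely formal.
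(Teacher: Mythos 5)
Your proposal is correct and follows essentially the same route as the paper: the core support statement is taken from Lemma~5 of~\cite{az}, the transposed versions are handled via the conjugation symmetry $\nu\in\sup(A)\Leftrightarrow\nu'\in\sup(A')$, and the identification of $B$ as $A$ with $x$ inserted full columns of height $n$ (hence the endpoint computations $\ww(B)=n^x\cup\ww(A)$, $\nn(B)=n^x\cup\nn(A)$) is the bookkeeping the paper leaves implicit in its remark about Algorithm~\ref{Proc:alg} filling the block $(x^n)$ with $x$ copies of $12\cdots n$.
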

 Note that this follows from the conjugation symmetry and  Algorithm \ref{Proc:alg}.
 In the application of this  algorithm, the rectangle $(x^n)$ of  length $n$ is always  filled with $x$  words $12\cdots n$. Therefore, if $c$ is an element of the support of $B$ then it has the form $b\cup (n^x)$ for some partition $b$ in the support of $A$.

\begin{example}
The following skew diagrams are equal  up to a block of maximal depth and maximal width
$$A=\tiny\young(::\hfill,\hfill\hfill),\quad B=\tiny\young(::\hfill,\hfill\hfill\hfill,\hfill\hfill\hfill,\hfill\hfill),\quad C=\tiny\young(::\hfill\hfill,\hfill\hfill\hfill\hfill,\hfill\hfill\hfill\hfill,\hfill\hfill\hfill),$$
$\ww(A)=111\lhd\nn(A)=21$; $\ww(B)=333=2^3+\ww(A)\lhd\nn(B)=2^3+\nn(A)=432$; $\ww(C)=4333=4^1\cup\ww(B)\lhd \nn(C)=4\cup \nn(B)=4432$.
\end{example}
\medskip
Next the skew shapes whose support has only one element, that is, $\ww=\nn$, are characterised, and, therefore, the skew Schur functions which are Schur functions. Note that, in \eqref{chain}, one has  $\sigma^i=\ww$, $1\le i\le s-1$, if and only if $A$ or $A^\pi$ is a partition.

\begin{proposition}\label{lemma}
     Let $A$ be a skew diagram and  let
   $u$, $v$  and $\nu$ be  partitions. Then,
\begin{enumerate}
\item [(a)] {\em (\cite{az}, Theorem 3, 16; \cite{bk}, Lemma 4.4)} $\ww=\nn=\nu$ if and only if $A=\nu$ or $A=\nu^{\pi}.$ In this case, $\sup(A)=\{\ww=\nn\}=[\ww,\nn]$.

\item [(b)]   {\em (\cite{vW})} $s_{A}=s_{\nu}$ if and only if $A=\nu$ or $A=\nu^{\pi}.$

\end{enumerate}
\end{proposition}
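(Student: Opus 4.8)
The plan is to deduce (b) from (a) and, within (a), to reduce the forward implication to the observation stated just before the proposition --- that the chain \eqref{chain} is constant exactly when $A$ or $A^{\pi}$ is a partition.

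Granting (a), part (b) is immediate. If $A=\nu$ then $s_A=s_\nu$ by definition, and if $A=\nu^{\pi}$ then $s_A=s_\nu$ by \eqref{shurP2}. Conversely, $s_A=s_\nu$ forces $\sup(A)=\{\nu'\}$, a single point; since $\ww$ and $\nn$ both lie in $\sup(A)$, this gives $\ww=\nn$, so by (a) $A$ or $A^{\pi}$ is a partition $D$, whence $s_D=s_\nu$ (directly or via \eqref{shurP2}) and the linear independence of the Schur basis yields $D=\nu$, i.e.\ $A=\nu$ or $A=\nu^{\pi}$. The reverse implication of (a) is a direct computation: for a partition $A$ one has $rows(A)=A$ and $cols(A)=A'$, hence $\ww=cols(A)=A'=rows(A)'=\nn$, and this persists under $\pi$-rotation since $\sup$ --- and therefore $\ww$ and $\nn$ --- is $\pi$-invariant; then $\sup(A)\subseteq[\ww,\nn]=\{\ww\}$ together with $\ww\in\sup(A)$ forces $\sup(A)=\{\ww\}=[\ww,\nn]$.

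The work is the forward implication of (a): assume $\ww=\nn$ and show $A$ or $A^{\pi}$ is a partition. First, $A$ is connected: if $A=A_1\sqcup A_2$ with $A_1,A_2$ nonempty, then $\ww(A)=\ww(A_1)\cup\ww(A_2)$ has $\ell(\ww(A_1))+\ell(\ww(A_2))$ parts while $\nn(A)=\nn(A_1)+\nn(A_2)$ has only $\max\{\ell(\nn(A_1)),\ell(\nn(A_2))\}$ parts, and as $\ell(\ww(A_i))\ge\ell(\nn(A_i))\ge 1$ the former strictly exceeds the latter, contradicting $\ww=\nn$. So take $A=\lambda/\mu$ connected and basic. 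Since $c_{\mu,\ww'}^{\lambda}=1$ and $\ww'=\nn'$, we have $\sup(A)=\{\ww\}$ and $s_A=s_{\ww'}$, so $A$ carries a \emph{unique} LR filling of content $\ww'$; hence its minimum LR filling (each column filled $1,2,\dots$ from the top) and its maximum LR filling (each strip $V_i$ of the $V$-sequence filled $1,\dots,n_i$ from the top) coincide. Comparing them on the rightmost box of each row gives $i-\mu'_{\lambda_i}=i$, i.e.\ $\mu_1<\lambda_i$ for every row $i$; feeding this back and iterating the comparison through the strips $V_2,V_3,\dots$ (equivalently through the skew diagrams $\lambda^1/\mu,\lambda^2/\mu,\dots$) one finds that agreement of the two fillings at every stage forces $A$, or its $\pi$-rotation, to be a partition. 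This dichotomy is precisely the observation that \eqref{chain} is constant iff $A$ or $A^{\pi}$ is a partition; it can also be read off Theorem \ref{schurfree}, since a multiplicity-free $s_{\lambda/\mu}$ reduces to a single Schur function only in case $R0$.

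The step I expect to be the real obstacle is this last one --- that for connected $A$ the minimum and maximum LR fillings coincide \emph{only} when $A$ or $A^{\pi}$ is a partition. It is a finite but fiddly analysis of the $V$-decomposition, best carried out by induction on the number of columns while tracking which strips $st(S_{m_j})$ of the minimum tableau already meet every row of the current diagram (Procedure \ref{Proc:alg} doing nothing exactly when all of them do); this is the content of \cite{az} (Theorems 3 and 16) and \cite{bk} (Lemma 4.4), part (b) in this form being \cite{vW}.
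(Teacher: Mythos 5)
The paper does not actually prove Proposition \ref{lemma}: both parts are quoted from the literature (\cite{az}, Theorems 3 and 16; \cite{bk}, Lemma 4.4; \cite{vW}), and the adjacent remark that \eqref{chain} is constant exactly when $A$ or $A^{\pi}$ is a partition is likewise stated without proof. So there is no internal argument to compare with, and judged on its own terms most of what you write is correct: the deduction of (b) from (a) via linear independence of the Schur basis, the easy direction of (a), the length count $\ell(\ww(A_1))+\ell(\ww(A_2))>\max\{\ell(\nn(A_1)),\ell(\nn(A_2))\}$ ruling out disconnected shapes, and the observation that $\ww=\nn$ forces the minimum and maximum LR fillings to coincide (the support is a single point and both extreme contents occur with coefficient $1$) are all sound reductions.

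What is not finished is precisely the substantive content of the forward implication of (a): that coincidence of the two fillings forces $A$ or $A^{\pi}$ to be a partition. Your treatment --- the first-strip comparison giving $\mu'_{\lambda_i}=0$, i.e.\ $\mu_1<\lambda_i$ for every row, followed by ``feeding this back and iterating through $V_2,V_3,\dots$'' --- is an assertion rather than an argument, and the first-strip condition alone is genuinely insufficient: for $A=(3,2)/(1)$ one has $\mu_1<\lambda_i$ for both rows and the two fillings agree on $V_1$, yet $A$ is neither a partition nor a rotated partition (here $A^\pi=A$, and the fillings diverge only on $V_2$, consistently with $\ww=(2,1,1)\ne\nn=(2,2)$). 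So all the combinatorial work sits in the inductive step you only gesture at, and you ultimately discharge it to \cite{az} and \cite{bk}. Since the paper itself treats the whole proposition as a citation, this is consistent with the paper's handling; but as a self-contained proof the iteration over the $V$-decomposition is the genuine gap and would still have to be carried out (or the references invoked explicitly as the proof).
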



In the next proposition we characterise the skew diagrams $A$ whose support has only two elements, that is, $\sup(A)=\{\ww,\nn\}$, and, in particular, those whose Schur
interval has only two elements,
$[\ww,\nn]=\{\ww,\nn\}$. This was  shown in
\cite{az} by means of Algorithm 1.
Consider the skew diagrams: $F1=((a+1)^x,a)/(a^x)$, and  $\tilde F1=(a+1,a^x)/(a)$, $a,\,x\ge 1$:
\begin{align}\label{ini2}
\parbox{4cm}{$F1$\quad\begin{tikzpicture}
\fill[color=blue!20] (0,.25) rectangle (1.25,1.25);
\draw (0,0) rectangle (1.25,.25)  (1.25,.25) rectangle (1.5,1.25);
\draw (0,0) -- (0,1.25) -- (1.25,1.25);
\end{tikzpicture}}
\parbox{4cm}{$\widetilde{F}1$\quad\begin{tikzpicture}
\fill[color=blue!20] (-0.5,.75) rectangle (1,1);
\draw (-0.5,0) rectangle (1,.75)  (1,.75) rectangle (1.25,1);
\draw (-0.5,0) -- (-0.5,1) -- (1.25,1);
\end{tikzpicture}}.
\end{align}

\begin{proposition}\label{prop12}\label{prop13}\label{prop1}
 {\em (\cite{az}, Theorem 16)}
    Let $A$ be a skew diagram with
$\ww\precneqq\nn$. Then, $\sup(A)=\{\ww,\nn\}$ if and only
if, up to a $\pi$--rotation/ or conjugation and up to a block of maximal width or
maximal depth, $A$ either is an $F1$ or an $\widetilde{F}1$
configuration. In particular, if $A$ is a disconnected two column (row) diagram,  with
one connected component  a single box, then one has $\ww\vartriangleleft\nn$ and
  $\sup(A)= [\ww,\nn]=\{\ww,\nn\}$.
\end{proposition}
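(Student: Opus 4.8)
I would prove the two implications separately, obtaining the ``in particular'' clause as a by-product of the sufficiency direction.

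\emph{Sufficiency.} I would first reduce to the minimal configurations: by Lemma~\ref{bloco}(b) adjoining a block of maximal depth (resp.\ width) only translates the support by $\cup(n^x)$ (resp.\ $+(x^n)$), and by \eqref{shurP1}--\eqref{shurP2} together with $\ww\preceq\nu\preceq\nn\Leftrightarrow\nn'\preceq\nu'\preceq\ww'$ a $\pi$--rotation or a conjugation preserves $|\sup(A)|$; so it suffices to treat $F1$ and $\widetilde F1$ themselves. Both are disconnected: in $F1=((a+1)^x,a)/(a^x)$ the box in row $x$ meets the box in row $x+1$ only at a corner, so $F1$ splits into the column $(1^x)$ and the row $(a)$, and likewise $\widetilde F1=(a+1,a^x)/(a)$ splits into a single box and the rectangle $(a^x)$. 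Hence $s_{F1}=s_{(1^x)}s_{(a)}$ and $s_{\widetilde F1}=s_{(1)}s_{(a^x)}$, and Pieri's rule yields in each case exactly two Schur functions, whose conjugate indices are $\ww=cols(A)$ and $\nn=rows(A)'$. The ``in particular'' clause is then the case $a=1$ of $F1$ (up to $\pi$--rotation/conjugation): there $\sup(A)=\{\ww=(h,1),\ \nn=(h+1)\}$, and since already $[\ww,\nn]=\{\ww,\nn\}$ one gets $\ww\lhd\nn$; the two--row statement is the conjugate.

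\emph{Necessity, set-up.} Assume $\sup(A)=\{\ww,\nn\}$ with $\ww\precneqq\nn$. Using Lemma~\ref{bloco}(a),(b) I would peel off every block of maximal depth and of maximal width --- this preserves $|\sup|=2$ and $\ww\precneqq\nn$ --- so I may assume $A$ is \emph{reduced}: no column of $A$ meets all its rows and no row meets all its columns. By Proposition~\ref{lemma}(a), $A$ is then neither a partition nor a $\pi$--rotation of one. If $A$ is a vertical or a horizontal strip, its connected components are its maximal runs and $s_A$ is a product of the $s_{(1^{h_i})}$ (resp.\ $s_{(h_i)}$); Pieri's rule forces $|\sup(A)|=2$ to mean exactly two factors, one of which is a single box, i.e.\ $A$ is an $F1$ with $a=1$ up to rotation/conjugation. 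So I may assume $A$ has a row of length $\ge2$ and a column of height $\ge2$.

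\emph{Necessity, key step.} Now I would invoke the chain \eqref{chain} $\ww\preceq\sigma^1\preceq\cdots\preceq\sigma^{s-1}=\nn$, all of whose terms lie in $\{\ww,\nn\}$, and first prove $\sigma^1=\nn$. Since $\sigma^1=(n_1)\cup\ww^1$ with $\ww^1=cols(A\setminus V_1)$, the alternative $\sigma^1=\ww$ would give $cols(A)=(n_1)\cup cols(A\setminus V_1)$ as multisets; this is impossible, because the largest part on the right is $n_1$, the number of rows of $A$, whereas the largest part on the left is the height of the tallest column of $A$, which is strictly smaller since $A$ has no full column. Hence $\sigma^1=\nn$, forcing $\ww^1=(n_2,\dots,n_s)=\nn^1$, so that $\sup(A\setminus V_1)$ is a singleton and, by Proposition~\ref{lemma}(a), $A\setminus V_1$ is a partition or its $\pi$--rotation. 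Running the same argument on the conjugate $A'$ (again reduced, with $\ww(A')\precneqq\nn(A')$ and $|\sup(A')|=2$) shows that $A\setminus W_1$ is likewise a partition or its rotation, where $W_1$ is the strip of bottom boxes of the columns of $A$.

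\emph{Finishing, and the main obstacle.} It remains to deduce from ``$A$ reduced, $|\sup(A)|=2$, and both $A\setminus V_1$ and $A\setminus W_1$ straight or rotated shapes'' that $A$ is an $F1$ or a $\widetilde F1$ up to rotation/conjugation. Here $A\setminus V_1$ being a genuine partition $\rho$ forces $\mu$ to be constant, say equal to $p$, on the ``tall'' rows of $A$ (those surviving the removal of $V_1$), so those rows form $\rho$ above column $p$ while every other row of $A$ is a single box in a column $\le p+1$; the two reducedness conditions bound the part--multiplicities of $\mu$ and the shape of $\rho$ from both sides, and imposing $|\sup(A)|=2$ --- which one checks directly from the resulting explicit shape, or reduces by induction on $|A|$ --- collapses $\rho$ to a single row with $\mu$ a one--column rectangle (the $\widetilde F1$ pattern) or $\rho$ to a rectangle with $\mu$ a single row (the $F1$ pattern); the case where $A\setminus V_1$ is a rotated partition is symmetric. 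I expect this final case analysis --- keeping track of how $\mu$ and $\rho$ interlock so that no third support element appears --- to be the main obstacle; it is the technical core of Theorem~16 in~\cite{az}, and the shrinking argument parallels Lemma~7.1 of~\cite{DTK}.
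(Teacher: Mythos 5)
Your sufficiency direction is complete and correct, and it is in fact more self-contained than what the paper itself offers: the paper does not prove this proposition — it is quoted from \cite{az}, Theorem 16, and the adjacent Remark~\ref{cr} merely recomputes $\sup(F1)$ with Algorithm~\ref{Proc:alg} — whereas you get $\sup(F1)$ and $\sup(\widetilde{F}1)$ directly from the disconnected decomposition (column $\oplus$ row, box $\oplus$ rectangle) and Pieri's rule, with the reduction to these minimal shapes handled exactly as the paper does elsewhere, by Lemma~\ref{bloco} and the rotation/conjugation symmetries; the ``in particular'' clause then falls out correctly as the case $a=1$ (or $x=1$), where the interval $[\ww,\nn]$ visibly has only two elements.

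The genuine gap is in the necessity direction. Your set-up is sound: peeling blocks preserves $|\sup(A)|$ and $\ww\precneqq\nn$ by Lemma~\ref{bloco}; strips are correctly disposed of; $\sigma^1\in\sup(A)=\{\ww,\nn\}$ and the multiset comparison excludes $\sigma^1=\ww$ once no column meets all rows, so $\ww^1=\nn^1$, and Proposition~\ref{lemma}(a) makes $A\setminus V_1$ (and, by conjugation, $A\setminus W_1$) a straight or rotated shape. But the decisive step — that a reduced diagram with $|\sup(A)|=2$ and these two straightness conditions must be an $F1$ or $\widetilde{F}1$ — is only sketched, and you yourself flag it as the main obstacle and defer it to \cite{az}. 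The sketch as written is also not watertight (for instance, length-one rows of $A$ lying above the rows surviving $V_1$-removal sit in columns $\geq p+1$, not $\leq p+1$), and nothing in the proposal actually verifies that no third support element appears in the surviving family of shapes, which is precisely the content of Theorem 16 of \cite{az}. So, while your argument is consistent with the machinery the paper builds (the chain \eqref{chain}, Lemma~\ref{bloco}, Proposition~\ref{lemma}), as a standalone proof the necessity half is incomplete; since the paper imports that half wholesale from \cite{az}, there is no in-paper argument it can be matched against, but in its present form it does not yet constitute a proof.
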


\begin{remark} \label{cr} Note that, when applying Algorithm 1 to $F1$, we find that the only string in the minimum LR filling of $F1$ that we can stretch is the string of length $x$, which can only be stretched in one way, which gives rise to the maximum LR filling.  Therefore, the   support of $F1$ is formed only by $\ww=(x,1^a)\preceq\nn=(x+1,1^{a-1})$. When $a,\,x\ge 2$, the partition $\nn$ does not cover $\ww$, for instance $\xi=(x,2,1^{a-2})\in[\ww,\nn]$ and, therefore, the support is not the full interval.
The proof is similar for $\tilde F_1$, considering its $\pi$--rotation.
\end{remark}


\begin{example}
    Proposition \ref{prop1} can be used together with Lemma \ref{bloco} to show
that the support of the skew diagram
$\lambda/\mu=(5,3^3)/(1^2)=\tiny\young(:\hfill\hfill\hfill\hfill,:\hfill\hfill,\hfill\hfill\hfill,\hfill\hfill\hfill)$
is not the entire Schur interval. By Lemma \ref{bloco}, it is enough
to consider the support of the simple skew diagram
$\alpha/\beta=\tiny\young(:\hfill\hfill,\hfill,\hfill)$ obtained from
$\lambda/\mu$ by removing the block $(2^4)$. Since the resulting
diagram is the $\pi$--rotation of an $F1$ configuration with $a,x\ge 2$,
the support of $\lambda/\mu$ is strictly contained in its Schur
interval.
\end{example}

\section{Recognition of  full and non full interval supports}

 In the rest of the paper, the general philosophy of the application of  Algorithm \ref{Proc:alg}, to a skew diagram $A$, consists in the prolongation of its strings, starting with the minimum LR filling of $A$, in any  possible way.
We remark that in ~\cite{az} it is characterised when in Algorithm ~\ref{Proc:alg}, the partition corresponding to the content of the output covers, in the dominance order, the partition corresponding to the content of the input. Thus, when applying a step of the algorithm we can check whether the partition, corresponding to the new content, covers the preceding one. If not, then we have a suspicious interval that may contain a partition not in the  support of $A$.
\subsection{Bad configurations}
We start this section with the analysis of some particular configurations of boxes such that their  appearance in a skew diagram $A$ implies that  $A$ has non full support, $\sup(A)\subsetneqq[{\bf w},{\bf n}]$.


\begin{lemma}\label{induction} If $A$ is a skew--diagram and the support of $A\setminus V_1$ is not the entire Schur interval, then neither is  the support of $A$.

\end{lemma}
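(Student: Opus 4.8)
The plan is to compare the Schur expansions of $s_A$ and $s_{A\setminus V_1}$, using the fact that the $V$-sequence construction from Section~3 relates the LR fillings of $A$ to those of $A\setminus V_1$ in a controlled way. Concretely, recall that $\nn=(n_1,\ldots,n_s)$ and that in every LR filling the maximal behaviour is achieved by filling $V_1$ with $12\cdots n_1$; more to the point, the chain construction~\eqref{chain} shows that every element of $\sup(A)$ of the form $(n_1)\cup\gamma$, with $\gamma\in\sup(A\setminus V_1)$, does occur, and conversely one expects that the ``top slice'' of $\sup(A)$, i.e.\ those partitions whose first part equals $n_1$, is exactly $\{(n_1)\cup\gamma:\gamma\in\sup(A\setminus V_1)\}$. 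The key point is that if $\gamma\in[\ww(A\setminus V_1),\nn(A\setminus V_1)]$ is \emph{not} in $\sup(A\setminus V_1)$, then $(n_1)\cup\gamma$ should be a partition lying in $[\ww(A),\nn(A)]$ but not in $\sup(A)$, which proves the contrapositive of the Lemma.

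**Key steps.** First I would record the numerical facts: $\nn(A)=(n_1,\ldots,n_s)$ while $\nn(A\setminus V_1)=(n_2,\ldots,n_s)$, and $\ww(A)=cols(A)$ while $\ww(A\setminus V_1)=cols(A\setminus V_1)$ is obtained from $cols(A)$ by deleting one box from each nonempty column of $A$ that meets $V_1$ (equivalently, decreasing the appropriate parts of $\ww(A)$ by one). From this one checks the order relation $\ww(A)\preceq (n_1)\cup\ww(A\setminus V_1)$, and that for any partition $\gamma$ with $\ww(A\setminus V_1)\preceq\gamma\preceq\nn(A\setminus V_1)$ one has $(n_1)\cup\gamma$ is a genuine partition satisfying $\ww(A)\preceq(n_1)\cup\gamma\preceq\nn(A)$; this is where one uses $n_1\ge\gamma_1$ (since $n_1$ is the number of rows of $A$, hence of $A\setminus V_1$, hence $\ge$ any column length of $A\setminus V_1$, hence $\ge\gamma_1$) so that the union just prepends $n_1$ as a new first part. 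Second, I would establish the promised identification of the top slice of $\sup(A)$: given an LR filling $T$ of $A$ whose conjugate content has first part $n_1$, the strip $V_1$ must be filled with $1,2,\ldots,n_1$ (there are $n_1$ rows and the $i$-th entry of $\nn$ being attained forces this), and deleting $V_1$ yields an LR filling of $A\setminus V_1$ whose conjugate content is $\gamma$ with $(n_1)\cup\gamma$ the conjugate content of $T$; conversely, prepending a fresh column $12\cdots n_1$ to any LR filling of $A\setminus V_1$ of content $\gamma$ gives an LR filling of $A$ with content $(n_1)\cup\gamma$ (this uses the string/shuffle description from Proposition~\ref{P:css} and the column-by-column reconstruction in Algorithm~\ref{Proc:alg}). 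Third, combining: if $\gamma\in[\ww(A\setminus V_1),\nn(A\setminus V_1)]\setminus\sup(A\setminus V_1)$, then $(n_1)\cup\gamma$ lies in $[\ww(A),\nn(A)]$ but, by the identification, is not in $\sup(A)$ — so $\sup(A)\subsetneqq[\ww(A),\nn(A)]$.

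**Main obstacle.** The delicate step is the second one: showing that an LR filling of $A$ whose conjugate content has first part \emph{exactly} $n_1$ must restrict to an LR filling on $A\setminus V_1$, and that no ``new'' partitions with first part $n_1$ can appear in $\sup(A)$ beyond the unions $(n_1)\cup\gamma$. The conjugate-content first part being $n_1$ means the filling has exactly $n_1$ boxes labelled $1$ and they must sit one per row (in the rightmost column region), which is precisely the content of $V_1$ being a full $1,2,\ldots,n_1$ strip; one then needs that removing this strip leaves a valid SSYT whose reading word is still a lattice word — the lattice condition is inherited because deleting the initial $12\cdots n_1$ column from a shuffle of columns $12\cdots m_i$ leaves a shuffle of the remaining columns. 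I would carry this out by invoking the complete-sequence-of-strings characterization (Proposition~\ref{P:css}): the string $S_{n_1}$ of a filling with conjugate content $(n_1)\cup\gamma$ necessarily has length $n_1$ and, being the longest, must be the first string $S_{m_1}$ in a weakly increasing complete sequence, and its strip $st(S_{n_1})$ is exactly $V_1$ filled appropriately; the remaining strings form a complete sequence for $A\setminus V_1=A\setminus st(S_{n_1})$, which is the LR condition there. This bookkeeping, together with the order computations in the first step, finishes the proof.
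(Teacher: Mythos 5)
Your proposal is correct and follows essentially the same route as the paper: take $\xi$ in the Schur interval of $A\setminus V_1$ but outside its support, pass to $(n_1)\cup\xi\in[\ww,\nn]$, and observe that any LR filling of $A$ whose conjugate content has first part $n_1$ must place the length-$n_1$ string exactly on $V_1$, so that $(n_1)\cup\xi\notin\sup(A)$. Your extra bookkeeping with the string characterization and the dominance inequalities simply makes explicit what the paper's one-line argument leaves implicit.
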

\begin{proof} Let $[\ww^1,\nn^1]$ be the Schur interval of $A\setminus V_1$ and $\ww^1\preceq\xi\preceq\nn^1$ such that $\xi\notin\sup(A\setminus V_1)$. Then $\ww\preceq (n_1)\cup \xi\preceq \nn$ and $(n_1)\cup \xi\notin \sup(A)$ since the only way to put the string $n_1\cdots 21$ in $A$ is to fill the strip $V_1$ with it and what remains is $A\setminus V_1$.
\end{proof}

We observe that if $\sup(A\setminus V_1)$ attains the Schur interval this does not mean that the same happens to $A$, as one can see in the next example.

\begin{lemma}\label{disconnected}
Let $A$ be a skew diagram with two or more connected components. If there is a component containing a two by two block of boxes, then  the support of $A$ is not the entire Schur interval.
\end{lemma}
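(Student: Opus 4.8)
The plan is to reduce to the case of two connected components and then exploit the algorithmic description of the support together with Lemma \ref{induction} and Lemma \ref{bloco}. First I would fix a connected component $C$ of $A$ that contains a $2\times 2$ block of boxes, and let $D$ denote the (nonempty) union of the remaining components, so that $s_A=s_C s_D$ and, as recalled after the definition of the Schur interval, $\ww(A)=\ww(C)\cup\ww(D)$ and $\nn(A)=\nn(C)+\nn(D)$ when there are exactly two pieces; the general disconnected case follows by grouping. Since $C$ contains a $2\times 2$ block, both $\ell(\nn(C))\ge 2$ and $w_1(C)\ge 2$, and moreover $\ww(C)\precneqq\nn(C)$ with, crucially, $\nn(C)$ not covering $\ww(C)$ in dominance: a $2\times 2$ block forces at least two columns of length $\ge 2$ and at least two rows of length $\ge 2$, so when we pass from $\ww(C)$ to $\nn(C)$ by lifting boxes there is genuine ``room'' for an intermediate partition, exactly as in Remark \ref{cr}.

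The key step is to produce an explicit partition $\xi$ with $\ww(A)\preceq\xi\preceq\nn(A)$ and $\xi\notin\sup(A)$. The natural candidate is built from a partition $\eta$ lying strictly between $\ww(C)$ and $\nn(C)$ that fails to be attained ``compatibly'' with $D$; more precisely, I would run Algorithm \ref{Proc:alg} on the minimum LR filling of $A$ and observe that the very first string one may stretch lives inside $C$ (the strip $V_1$ of $A$ meets $C$), and stretching it creates an LR filling whose conjugate content does \emph{not} cover the previous one — because the $2\times 2$ block in $C$ obstructs the ``one box at a time'' transfer, by the covering criterion of \cite{az} recalled in the opening paragraph of Section 4. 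This yields a suspicious interval, and the disconnectedness is what makes it genuinely defective: any LR filling of $A$ restricts to LR fillings of $C$ and of $D$ separately (the reading word is a shuffle, but the lattice condition must hold on each component's contribution after the split into vertical strips), so a content that is forced to be ``spread'' across the $2\times 2$ block of $C$ in a way incompatible with simultaneously building a valid filling of $D$ cannot occur. Concretely I would take $\xi=\mu\cup\nu$ type data: choose the intermediate $\eta\in[\ww(C),\nn(C)]\setminus\{\ww(C),\nn(C)\}$ guaranteed above, and argue that $\xi:=\eta\,\text{-combined-with}\,\ww(D)$ (in the appropriate $\cup$/$+$ sense dictated by which of $\ww,\nn$ we are perturbing) lies in $[\ww(A),\nn(A)]$ but is not in $\sup(A)=\sup(C)\ast\sup(D)$, where $\ast$ is the convolution of supports coming from $s_A=s_Cs_D$.

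The cleanest route, and the one I would actually carry out, is to avoid re-deriving the convolution structure and instead invoke Lemma \ref{induction} repeatedly to peel off vertical strips of $A$ until the remaining skew diagram is, up to $\pi$-rotation, conjugation, and removal of maximal blocks (Lemma \ref{bloco}), a disconnected diagram one of whose components is an $F1$ configuration with parameters $a,x\ge 2$ — which is exactly what a $2\times 2$ block inside a disconnected shape degenerates to after stripping. By Proposition \ref{prop1} and Remark \ref{cr}, that reduced diagram already has support strictly smaller than its Schur interval, and Lemma \ref{induction} (applied in reverse, together with Lemma \ref{bloco}(a),(b) to restore the removed blocks) propagates the defect back up to $A$. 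The main obstacle I anticipate is bookkeeping: showing that the strip-peeling and block-removal operations can always be arranged so that the component with the $2\times 2$ block survives intact down to an $F1$ (or $\widetilde F1$) core with both parameters $\ge 2$, rather than being destroyed prematurely — this requires choosing at each stage \emph{which} strip to remove (here Lemma \ref{induction} is stated only for $V_1$, so one may need the $\pi$-rotation/conjugation symmetry of the support to reposition the relevant strip as $V_1$, or a mild strengthening allowing removal of the last strip $V_s$). Once that reduction is secured, the rest is immediate from the already-quoted results.
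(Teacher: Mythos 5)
Your declared ``cleanest route'' --- peel vertical strips via Lemma \ref{induction} and remove maximal blocks via Lemma \ref{bloco} until the diagram becomes, up to rotation and conjugation, an $F1$ or $\widetilde F1$ configuration with both parameters at least $2$ --- has a genuine gap: such a reduction does not exist in general, and the obstacle is not bookkeeping. Take $A$ to be two disjoint $2\times 2$ blocks, i.e.\ $(4,4,2,2)/(2,2)$. This is not an $F1$ or $\widetilde F1$ configuration (each of those has a single row, column or box as one of its pieces), it contains no block of maximal depth or width (neither in $A$ nor in its conjugate or rotation), and removing $V_1$, in any of the symmetric presentations, leaves two disjoint columns, whose support \emph{is} the full Schur interval by the strip discussion of Subsection 4.2. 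So Lemma \ref{induction} can never be fed a ``not full'' input without circularly invoking the very lemma you are proving, and Proposition \ref{prop1}/Remark \ref{cr} never become applicable: peeling removes a column (or, after conjugating, a row) from \emph{every} component simultaneously, so you cannot shrink one component to a single box or line while the other keeps its $2\times 2$ block. Your earlier sketch also rests on a false claim: the component $C$ containing the block need not satisfy $\ww(C)\precneqq\nn(C)$ (if $C$ is itself a $2\times 2$ block, or any partition, then $\ww(C)=\nn(C)$), so the intermediate partition inside $[\ww(C),\nn(C)]$ that you propose to transplant may not exist; and the ``convolution'' description of the support of $s_Cs_D$ is asserted but never established or used.

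The paper's proof is a short explicit construction that your sketch circles around but never reaches. Since the $2\times 2$ block survives the removal of $V_1$, the minimum $\ww^1=(\overline w_1,\dots,\overline w_\ell,1^q)$ of $\sup(A\setminus V_1)$ has a part $\overline w_\ell\ge 2$; lowering one box from that part to a new part equal to $1$ gives a partition strictly dominated by $\ww^1$, hence not in $\sup(A\setminus V_1)$, and one sets $\xi=(n_1,\overline w_1,\dots,\overline w_\ell-1,1^{q+1})$. Any LR filling of $A$ whose conjugate content begins with $n_1$ must fill the strip $V_1$ with its first string, so $\xi\notin\sup(A)$; on the other hand $\xi\preceq\sigma^1=(n_1)\cup\ww^1\preceq\nn$, and $\ww\preceq\xi$ holds because disconnectedness forces $\ell(\ww)\ge\ell(\ww^1)+o$ with $o\ge 2$ the number of components (each component loses its rightmost column inside $V_1$). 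That length inequality is exactly the ``room'' your first paragraph was hunting for: it comes from the presence of the other component, not from an intermediate element of $[\ww(C),\nn(C)]$.
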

\begin{proof}
Let  $\nn=(n_1,\ldots,n_s)$.  Recall that $\nn^{i-1}=(n_i,\ldots,n_s)$  is the maximum of $\sup(A\setminus \bigcup_{k=1}^{i-1} V_k)$, $i=2,\cdots s$, and  that $n_i$ is the number of rows of $A\setminus \bigcup_{k=1}^{i-1} V_k$, for all $i$.
Since there is a 2 by 2 block in  one of the connected components of $A$, there must exist a column in $A\setminus V_1$ whose length is at least $ 2$. Let  $\ww^1=(\overline{w}_1,\ldots,\overline{w}_{\ell},1^q)$, with $\overline{w}_{\ell}\geq 2$  for some $\ell\geq 1$ and $q\geq 0$. Clearly $(n_2,\ldots,n_s) \succcurlyeq(\overline{w}_1,\ldots,\overline{w}_{\ell}-1,1^{q+1})$, and
from \eqref{chain}  the partition
$$\sigma^1=(n_1,\overline{w}_1,\ldots,\overline{w}_{\ell},1^q)\in[\ww,\nn].$$
Note that $\ell(\ww)\ge \ell(\ww^1)+o$ where $o$ is the number of components of $A$. Since $A$ has at least two components, one has $\ell(\ww)\geq\ell(\sigma)+2$. Then, the partition
$$\xi:=(n_1,\overline{w}_1,\ldots,\overline{w}_{\ell}-1,1^{q+1})$$
clearly satisfy $\ww\preceq\xi\preceq\sigma$. Moreover, since $(\overline{w}_1,\ldots,\overline{w}_{\ell}-1,1^{q+1})\precneqq\ww^1$ it follows that
$(\overline{w}_1,\ldots,\overline{w}_{\ell}-1,1^{q+1})\notin\sup(A\setminus V_1)$, and therefore we conclude that $\xi\notin\sup(A)$.
\end{proof}

\begin{example}
The support of the skew diagram $A=\tiny\young(:::\hfill,:\hfill\hfill,:\hfill\hfill)$ is not the entire Schur interval, since it has two connected components, and one of them has a 2 by 2 block. We may follow the proof of the previous lemma to get a partition in the Schur interval that does not belong to the support of $A$. Note that $\ww=(2,2,1)$, $\nn=(3,2)$, $\ww^1=(2)=\nn^1$, $\ww\preceq\sigma^1=(3,2)=\nn$, and
$[\ww,\nn]=\{\ww=221,\xi=311,\nn=32\}$ with $\xi\notin\sup(A)=\{\ww,\nn\}$.
Note also that $A\setminus V_1=V_2$ and $V_2$ is a column of $A$.
\end{example}

\begin{corollary} \label{dis} If $A$ is a skew diagram with two or more components and the support of $A$ is the whole Schur interval $[\ww,\nn]$, then the components of $A$ are ribbon shapes.
\end{corollary}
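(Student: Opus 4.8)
The plan is to obtain this as an immediate contrapositive of Lemma~\ref{disconnected}, once we translate ``not a ribbon shape'' into the language of that lemma. First I would recall the relevant definitions: a \emph{ribbon shape} is a connected skew--diagram with no $2\times 2$ block of boxes, and each connected component of a skew diagram $A$ is, by its very construction, connected. Hence, at the level of a single component, ``failing to be a ribbon shape'' is equivalent to ``containing a $2\times 2$ block of boxes'': connectedness is automatic, so the only obstruction to being a ribbon is the presence of such a block.

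With that observation in hand, I would argue by contradiction. Assume $A$ has two or more components, that $\sup(A)=[\ww,\nn]$, and yet some component $C$ of $A$ is not a ribbon shape. By the preceding paragraph, $C$ then contains a $2\times 2$ block of boxes. Since $A$ has at least two components and one of them (namely $C$) contains a $2\times 2$ block, Lemma~\ref{disconnected} applies and yields $\sup(A)\subsetneqq[\ww,\nn]$, contradicting the hypothesis $\sup(A)=[\ww,\nn]$. Therefore every component of $A$ is a ribbon shape.

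There is essentially no technical obstacle here: the corollary merely repackages Lemma~\ref{disconnected} using the definition of a ribbon, and the only point that deserves to be spelled out is that connectedness of a component is free, so that on a component the conditions ``is not a ribbon'' and ``contains a $2\times2$ block'' coincide, which is exactly what is needed to invoke the lemma.
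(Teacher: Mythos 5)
Your proof is correct and matches the paper's intent exactly: the corollary is stated there as an immediate consequence of Lemma~\ref{disconnected}, obtained by the same contrapositive argument, using that each component is connected so that failing to be a ribbon is equivalent to containing a $2\times 2$ block. Nothing further is needed.
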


\begin{corollary}\label{same}
Let $A$ be a skew diagram
such that $\ell(\ww)>\ell(\nn)=s$ (equivalently, it has no block of maximal width),  and the strip $V_s$ is a column of
 $A$ of length greater than, or equal to 2. Then, the support of $A$ is not $[{\ww},{\nn}]$.
\end{corollary}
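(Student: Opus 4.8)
The plan is to reduce the statement to the two-column case already handled by Corollary \ref{dis} and Proposition \ref{prop1}, by peeling off the strips $V_1,\dots,V_{s-1}$ and arguing inductively with Lemma \ref{induction}. Concretely, since $V_s$ is assumed to be a column of $A$ of length $\ge 2$, I would first observe that $A\setminus(V_1\cup\cdots\cup V_{s-2})$ is a skew diagram whose maximal filling has length $s-(s-2)=2$, so it consists exactly of the two strips $V_{s-1}$ and $V_s$; moreover $V_s$ is a column of it of length $\ge 2$. Because $\ell(\ww)>\ell(\nn)=s$, this diagram has at least $s+1-(s-1)=2$ columns more than its maximal row-length permits in a connected shape, so it cannot be connected: a connected skew shape with maximal row length $2$ is a ribbon, and a ribbon made of two ``column-steps'' of total height $\ge 3$ would force $\ell(\ww)=\ell(\nn)+$ something small; the inequality $\ell(\ww)>\ell(\nn)$ together with $V_s$ being a full column of length $\ge 2$ forces $A\setminus(V_1\cup\cdots\cup V_{s-2})$ to be disconnected with a component containing a $2\times 2$ block, OR to be one of the $F1/\widetilde F1$ configurations with parameters $a,x\ge 2$.

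The key steps, in order, are: (1) Set $A'':=A\setminus(V_1\cup\cdots\cup V_{s-2})$ and show it is a two-row-maximal skew diagram with $V_s$ as a column of length $w\ge 2$ and with $\ell(\ww(A''))>\ell(\nn(A''))=2$. (2) Using the structure theory, observe that such an $A''$ either (i) is disconnected with one component carrying a $2\times 2$ block — apply Lemma \ref{disconnected} to get $\sup(A'')\subsetneqq[\ww(A''),\nn(A'')]$ — or (ii) is, up to rotation/conjugation and a block of maximal width/depth, an $F1$ or $\widetilde F1$ with $a,x\ge 2$, in which case Remark \ref{cr} (via Proposition \ref{prop1}) gives $\sup(A'')\subsetneqq[\ww(A''),\nn(A'')]$; in both sub-cases the support fails to fill the interval. (3) Apply Lemma \ref{induction} repeatedly, peeling $V_{s-2},V_{s-3},\dots,V_1$ back on: since $\sup\!\big(A\setminus(V_1\cup\cdots\cup V_k)\big)$ is not the full interval for $k=s-2$, it is not for $k=s-3$, and so on down to $k=0$, i.e. $\sup(A)\ne[\ww,\nn]$.

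The main obstacle is step (2): making precise the dichotomy that a two-row-maximal skew diagram whose column strip $V_s$ has length $\ge 2$ and which has strictly more columns than a connected ribbon would allow is forced into exactly the $F1/\widetilde F1$ or disconnected-with-$2\times 2$ cases. One clean way around it is to avoid a full classification and instead argue directly: if $A''$ is connected it is a ribbon with maximal row length $2$, so it is a ``staircase of rows and columns''; the presence of a column $V_s$ of length $w\ge 2$ means one of its vertical steps has height $\ge 2$, and the condition $\ell(\ww)>\ell(\nn)$ translates to there being at least one more vertical step, so $A''$ (or its rotation) contains an $F1$ sub-configuration with both parameters $\ge 2$, whence Remark \ref{cr} and Lemma \ref{bloco} finish it; if $A''$ is disconnected, then one of the pieces has maximal row length $2$ and height $\ge 2$, hence contains a $2\times 2$ block unless it is itself a single column — but if every disconnected piece were a single row or column we would get $\ell(\ww(A''))=\ell(\nn(A''))$ or a contradiction with $\nn(A'')$ having a part equal to $2$, so some piece does carry a $2\times 2$ block and Lemma \ref{disconnected} applies. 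I would present the disconnected sub-case first since it is the shorter of the two, then handle the ribbon sub-case by exhibiting the $F1$ pattern explicitly.
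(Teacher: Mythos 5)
There is a genuine gap, and it sits in your step (1). You assert that $A'':=A\setminus(V_1\cup\cdots\cup V_{s-2})$ satisfies $\ell(\ww(A''))>\ell(\nn(A''))=2$, as if the hypothesis $\ell(\ww)>\ell(\nn)$ were inherited by $A''$. It is not: the columns of $A$ that witness $\ell(\ww)>s$ may lie entirely in rows of length at most $s-2$, and those rows are annihilated when you peel off $s-2$ vertical strips. Concretely, take $\lambda/\mu=(5,4,4)/(3)$: here $s=4$, $\nn=(3,3,2,2)$, $\ww=(3,2,2,2,1)$, the diagram is connected, and $V_4$ is the first column, of length $2$, so all hypotheses of the corollary hold; yet $A''=A\setminus(V_1\cup V_2)$ is a $2\times 2$ rectangle, so $\ell(\ww(A''))=\ell(\nn(A''))=2$ and $\sup(A'')$ \emph{is} its (one-element) Schur interval. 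The same happens for the disconnected example $(5,5,1)/(2,2)$. Since Lemma \ref{induction} only transfers \emph{failure} of fullness from $A\setminus V_1$ to $A$, once $A''$ has full support your step (3) has nothing to propagate, and the argument collapses. (More generally $A''$ can also be a connected two-column diagram, which has full support by Proposition \ref{2col}.) Your step (2) dichotomy is also not quite what happens: any connected diagram of the form $V_{s-1}\cup V_s$ containing the $2\times|V_s|$ rectangle has exactly two columns, so under your (unavailable) assumption $\ell(\ww(A''))>2$ the shape $A''$ would automatically be disconnected, and the $F1/\widetilde F1$ branch never occurs — but this observation cannot repair the failure of (1).

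The fix — and this is what the paper does — is to peel \emph{adaptively} rather than removing a fixed number $s-2$ of strips. Since $V_s$ is a column, the rows of length $s$ form a rectangle of width $s\ge 2$ and height $|V_s|\ge 2$, hence $A$ contains a $2\times 2$ block. If $A$ is disconnected, Lemma \ref{disconnected} applies at once. If $A$ is connected, remove $V_1,\ldots,V_k$ only until the diagram first becomes disconnected; because $\ell(\ww)>s$ forces columns outside the span of the longest rows, this happens for some $k<s-1$, so the surviving rectangle still has width $s-k\ge 2$ and a $2\times2$ block sits in one component of the disconnected diagram $A\setminus(V_1\cup\cdots\cup V_k)$. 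Then Lemma \ref{disconnected} gives that its support misses part of its Schur interval, and repeated application of Lemma \ref{induction} lifts this back to $A$. In the example $(5,4,4)/(3)$ above, stopping at $k=1$ works, whereas your prescribed $k=s-2=2$ destroys the obstruction.
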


\begin{proof} Since $\ell(\ww)>\ell(\nn)=s$, $A$ is not a partition, and since $V_s$ is a column,  $A$ has precisely $|V_s|\ge 2$ rows of length $s\ge 2$ such that they form a rectangle, therefore, containing a $2\times 2$ block of boxes. If $A$ is disconnected it is done. Otherwise, as all vertical strips $V_i$ $1\le i\le s$, transverse that rectangle, we may delete  the vertical strips $V_1,\ldots,V_k$, for some $1\le k<s-1$, until getting a disconnected skew diagram. At this point, we are in conditions of Lemma \ref{disconnected}, and, from Lemma \ref{induction}, we are done.
\end{proof}

\begin{example}
For instance, it follows from Corollary \ref{same} that the supports of
the skew diagrams
$$\begin{array}{cccccccccccccc}A=\tiny\young(
::\hfill\hfill,:\hfill\hfill,\hfill\hfill\hfill,\hfill\hfill\hfill);&&
B=\tiny\young(
::\hfill,:\hfill,\hfill\hfill,\hfill\hfill);&&
C=\tiny\young(
:::\hfill,::\hfill,:\hfill\hfill,:\hfill\hfill,\hfill);&&
 D=\tiny\young(
:::\hfill,::\hfill\hfill,\hfill\hfill\hfill,\hfill\hfill\hfill)
\end{array}$$
\noindent are strictly contained in the Schur interval $[\ww,\nn]$.  In the first, for instance, one has
$\ww=4321$ and $\nn=442$, where $V_3$ is a column of $A$ of length two, and $\ell(\ww)=\ell(\nn)+1$.
The Schur interval is $[\ww,\nn]=\{\ww=4321;\,4411;\,433;\,\nn=442\}$ and the partition $\xi=4411\notin\sup(A)=\{\ww=4321,\,433;\,\nn=442\}$.
Therefore $s_A=s_\ww+s_{433}+s_\nn$. In the last, one has
$\xi=4311\in[\ww=32^3,\nn=432]\setminus\sup(D)$.
\end{example}

\begin{lemma}\label{las}
Let $A$ be a connected skew diagram  such that
$$\ww=(w_1,\ldots,w_r)\preceq\sigma^1=(n_1)\cup\ww^1=(n_1,\overline{w}_2,\ldots,\overline{w}_{\ell},w_{\ell+1},\ldots,w_r)\preceq\nn=(n_1,\ldots,n_s)$$
for some $3\leq \ell\leq r$ such that $\overline{w}_k\leq w_k$ for $k=1,\ldots,\ell$
and $0<\overline{w}_{\ell}<w_{\ell}$. Moreover, assume the existence of two integers
$2\leq i<j\leq\ell$ such that  $\overline{w}_i\geq \overline{w}_j+2$ and $w_j> \overline{w}_j$.  Then
the support of $A$ is not the entire Schur interval.
\end{lemma}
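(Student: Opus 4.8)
The plan is to produce a single partition $\xi\in[\ww,\nn]$ which is not the conjugate content of any Littlewood--Richardson tableau of $A$; together with the known inclusion $\sup(A)\subseteq[\ww,\nn]$ this gives $\sup(A)\subsetneqq[\ww,\nn]$. The mechanism is a rigidity at the first step: $A$ has exactly $n_1$ rows, so in an LR tableau of $A$ whose conjugate content $\xi$ has first part $n_1$, the longest string has length $n_1$ and must meet every row, whence its strip is forced to be $V_1$. Deleting it leaves an LR tableau of $A\setminus V_1$ (Proposition~\ref{P:css}) whose conjugate content is $\xi$ with its first part removed. Thus, if $\xi=(n_1)\cup\delta\in\sup(A)$ then $\delta\in\sup(A\setminus V_1)$, and since $\ww^1$ is the minimum of $\sup(A\setminus V_1)$ one gets $\delta\succeq\ww^1$. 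Hence it suffices to build a partition $\delta$ with $\delta\precneqq\ww^1$ (which forces $\delta\notin\sup(A\setminus V_1)$, so $\xi\notin\sup(A)$) for which $\xi:=(n_1)\cup\delta$ still lies in $[\ww,\nn]$.

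For the bound $\xi\preceq\nn$ I will keep $\delta\preceq\ww^1$, so that $\xi=(n_1)\cup\delta\preceq(n_1)\cup\ww^1=\sigma^1\preceq\nn$. For $\ww\preceq\xi$, put $d(m):=\sum_{k\le m}\sigma^1_k-\sum_{k\le m}w_k$, which is $\ge 0$ for all $m$ by $\ww\preceq\sigma^1$. For $2\le m\le\ell$ one has $\sigma^1_m=\overline{w}_m$, so $d(m)-d(m-1)=\overline{w}_m-w_m\le 0$; thus $d$ is weakly decreasing on $\{1,\dots,\ell\}$. Since $w_j>\overline{w}_j$, $d(j-1)=d(j)+(w_j-\overline{w}_j)\ge 1$, and therefore $d(m)\ge 1$ for all $1\le m\le j-1$. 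Now if $\delta$ is obtained from $\ww^1$ by moving a single box from a row $a$ down to a row $b$ with $i-1\le a<b\le j-1$, then each partial sum of $\xi$ equals the corresponding one of $\sigma^1$ except at the indices $m$ with $a<m\le b$, where it is exactly one smaller; these $m$ satisfy $m\le j-1$, so the slack $d(m)\ge 1$ absorbs the loss and $\ww\preceq\xi$. Lowering a box gives $\delta\precneqq\ww^1$. (The first part of $\xi$ is genuinely $n_1$, since position $1$ of $\ww^1$ is $\overline{w}_2\le n_1$ and the chosen box move, having $b\ge i\ge 2$, never increases it.)

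It remains to produce such a box move, and this is where $\overline{w}_i\ge\overline{w}_j+2$ is used. In $\ww^1$ the entry at position $i-1$ is $\overline{w}_i$ and at position $j-1$ is $\overline{w}_j$, so along positions $i-1,\dots,j-1$ the value drops by at least $2$. If some single step drops by $\ge 2$, take $a$ to be that step and $b=a+1$; otherwise at least two of the steps are strict descents, and take $a$ the first and $b$ one past the second. In each case $i-1\le a<b\le j-1$ and a direct check shows the altered sequence $\delta$ is still weakly decreasing, hence a partition. Feeding this $\delta$ into the previous paragraph yields $\xi=(n_1)\cup\delta\in[\ww,\nn]\setminus\sup(A)$, as desired.

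I expect the combinatorial bookkeeping in the middle two steps to be the only real obstacle: one has to keep $\delta$ a genuine partition while moving a box strictly downward, and simultaneously confine the move to rows indexed at most $j-1$ so that the estimate $d(m)\ge 1$ is available. The three numerical hypotheses are tailored exactly for this --- $w_j>\overline{w}_j$ creates the slack $d(j-1)\ge 1$, $\overline{w}_i\ge\overline{w}_j+2$ creates room to push a box down within the window $[i-1,j-1]$, and $\overline{w}_\ell>0$ (together with $j\le\ell$) guarantees all the rows involved actually occur in $\ww^1$. Once $\delta$ is in hand, the forcing of $V_1$ and the appeal to $\sup(A\setminus V_1)\subseteq[\ww^1,\nn^1]$ are immediate.
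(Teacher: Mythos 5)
Your proposal is correct and is essentially the paper's own argument: both construct a single witness $\xi\in[\ww,\nn]$ by moving one box downward in $\sigma^1$ among the entries $\overline{w}_i,\dots,\overline{w}_j$ (the paper moves it directly from the part $\overline{w}_i$ to the part $\overline{w}_j$, using $\overline{w}_i\geq\overline{w}_j+2$), verify $\ww\preceq\xi\preceq\nn$ using the slack created by $w_j>\overline{w}_j$, and conclude via the forced placement of the $n_1$-string in $V_1$ (the argument of Lemma \ref{induction}) that $\xi\notin\sup(A)$ since $\xi$ minus its first part is strictly below $\ww^1$. Your explicit slack function $d(m)$ and the localized box move are just a more carefully bookkept version of the paper's verbal justification of $\ww\preceq\xi$.
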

\begin{proof}
Consider the partition $\xi$ obtained from $\sigma$ by
replacing the entries $\overline{w}_i$ and $\overline{w}_j$ by $\overline{w}_i-1$ and $\overline{w}_j+1$,
respectively. It is clear that $\xi\preceq
\sigma\preceq\nn$. Note also that while $\xi$ is obtained from $\sigma$ by lowering
one box, from one row of length $\overline{w}_i$ to one of length $\overline{w}_j$,
$\ww$ is obtained from $\sigma$ by lowering $n_1-w_1=w_2+\cdots+w_l-(\overline{w}_2+\cdots+\overline{w}_l\ge 1)$ boxes from the first
row to some rows in  which is included that one of length $\overline{w}_j$, since
$w_j>\overline{w}_j$. Thus, $\ww$ can be obtained from $\xi$ by lowering $k=n_1-w_1$ boxes, in particular, $w_i-\overline{w}_i+1$ boxes to row $i$ and $w_j-\overline{w}_j+1$ to row $i$.
Thus $\ww\preceq \xi\preceq \sigma$.

Moreover, $\xi^1\prec\ww^1$, where $\xi^1$ denotes
the partition obtained from $\xi$ removing the first entry. Thus,
$\xi^1\notin\sup(A\setminus V_1)$.
From Lemma \ref{induction}, we conclude that $\xi\notin\sup(A)$.
\end{proof}


\begin{example}
    We can use the lemma above to conclude that the support of the connected skew diagram $A=\tiny\young(::\hfill,:\hfill\hfill,:\hfill\hfill,:\hfill\hfill,\hfill\hfill,\hfill)$ is not the entire Schur interval, since
    $\ww=(4,4,2)\preceq\sigma^1=(6,3,1)$, and the last two entries of $\sigma^1$ differ in two unities, and they are strictly smaller than the correspondent entries of $\ww$.
\end{example}


As a consequence of the last lemma we describe below a large group of skew diagrams whose support is strictly contained in the Schur interval.
Let $F0$ be a skew diagram having two columns  with the same length, and starting and ending on the same rows, say $x$ and $y$, and such that  all columns,  to the right of those two equal columns, end at least two rows above row $y$, with at least one of these columns starting at least one row above row $x$, as illustrated by:
\begin{equation}\label{f0} F0\;\begin{tikzpicture}
\fill[color=blue!20] (0,1) rectangle (.5,3.25);
\fill[color=blue!20] (.5,1.5) rectangle (1.25,3.25);
\fill[color=blue!20] (1.25,2) rectangle (1.75,3.25);
\fill[color=blue!20] (1.75,2.25) rectangle (2.5,3.25);
\fill[color=blue!20] (2.5,2.75) rectangle (3,3.25);
\draw (0,0.5) -- (0,1) -- (.5,1) -- (.5,1.5) -- (1.25,1.5) -- (1.25,2) -- (1.75,2) -- (1.75,1) -- (1.25,1) -- (1.25,.5) -- (.75,.5) -- (.75,0.5) -- (0,0.5);
\draw (1.75,1) rectangle (2,2.25)  (2,1) rectangle (2.25,2.25);
\draw (2.25,1.5) -- (2.75,1.5) -- (2.75,1.8) -- (3.25,1.8) -- (3.25,2.4) -- (3.75,2.4) -- (3.75,3.25) -- (3,3.25) -- (3,2.75) -- (2.5,2.75) -- (2.5,2.25) -- (2.25,2.25);
\draw (0,0.5) -- (0,3.25) -- (3,3.25);
\draw[blue] (1.75,2.25) -- (4.25,2.25);
\node at (2.6,1.25) {$\geq 2$};\node at (4.2,2.75) {$\geq 1$};
\end{tikzpicture}.\end{equation}
Denote by $F0', F0^{\pi}$ and $F0^{\pi'}$ the skew diagrams which are, respectively, the conjugate, the $\pi$--rotation, and the conjugate of the $\pi$--rotation of an $F0$ skew diagram.

\begin{corollary}\label{TF3}
The support of the skew diagrams  $F0$, $F0'$, $F0^{\pi}$, or  $F0^{\pi'}$ is strictly contained in the Schur interval.
\end{corollary}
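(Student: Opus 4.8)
The strategy is to reduce the statement about the four skew diagrams $F0$, $F0'$, $F0^\pi$, $F0^{\pi'}$ to a single application of Lemma~\ref{las}, using the symmetries of the support already recorded in Section~3. First I would observe that, by the rotation symmetry \eqref{shurP2} and the conjugation symmetry \eqref{shurP1}, together with the facts recalled after the definition of $supp(A)$ (namely $supp(A)=supp(A^\pi)$ and $\nu\in supp(A)\Leftrightarrow\nu'\in supp(A')$), it suffices to prove the claim for $F0$ itself: the supports of $F0'$, $F0^\pi$, $F0^{\pi'}$ are obtained from that of $F0$ by conjugation and/or $\pi$-rotation, and likewise their Schur intervals, so $supp$ fails to fill $[\ww,\nn]$ for one exactly when it fails for all four.

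Next I would unwind the combinatorial description of $F0$ in terms of $\ww$ and the chain \eqref{chain}. By hypothesis $F0$ has two adjacent columns $W_i,W_{i+1}$ of equal length spanning exactly rows $x$ through $y$, every column strictly to their right ends at least two rows above row $y$, and at least one such column starts at least one row above row $x$. I would read off $\ww=(w_1,\dots,w_r)$ and then form $\sigma^1=(n_1)\cup\ww^1$ as in \eqref{chain}; the point is that the geometric conditions translate precisely into the numerical hypotheses of Lemma~\ref{las}. Concretely: the ``two columns of equal length starting and ending on the same rows'' forces two equal consecutive entries in $\ww$ at positions corresponding to $W_i,W_{i+1}$, and these two columns survive into $\lambda^1/\mu$ (they are not fully consumed by $V_1$) so the corresponding entry of $\ww^1$ — call it $\overline w_j$ — is strictly less than $w_j$; the ``columns to the right end at least two rows above $y$'' gives an earlier entry $\overline w_i$ of $\ww^1$ with $\overline w_i\ge\overline w_j+2$; and the ``at least one column starts at least one row above $x$'' together with connectedness of $F0$ ensures $\ell\ge 3$ and that $0<\overline w_\ell<w_\ell$ as required. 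Once this dictionary is set up, Lemma~\ref{las} applies verbatim with these $i<j\le\ell$ and yields $\xi\in[\ww,\nn]\setminus supp(F0)$.

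The step I expect to be the main obstacle is verifying carefully that the loose English description of an $F0$ configuration really does guarantee \emph{all} the inequalities of Lemma~\ref{las} simultaneously — in particular pinning down which entries of $\ww^1$ play the roles of $\overline w_i$ and $\overline w_j$, checking $\overline w_k\le w_k$ for all $k\le\ell$, checking $w_j>\overline w_j$ (this is where ``$W_i,W_{i+1}$ start on row $x$ while some column to the right starts strictly above row $x$'' is used, so that pushing up a box in the first strip genuinely lengthens that row), and confirming $\ell\ge 3$. This is a bookkeeping argument on the column lengths of $F0$ and its first vertical strip $V_1$; I would handle it by drawing the generic picture, labelling the column that starts above row $x$, and tracking exactly which boxes of row $y$ and of the rows above it get lifted when $V_1$ is filled with $12\cdots n_1$. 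After that, the conclusion is immediate: Lemma~\ref{las} gives the desired $\xi$, and the symmetry reduction from the first paragraph finishes the corollary. \qed
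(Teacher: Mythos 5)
Your overall route is exactly the paper's: reduce to the $F0$ case by the conjugation and $\pi$--rotation symmetries, and then exhibit a missing partition via Lemma~\ref{las} applied to $\sigma^1=(n_1)\cup\ww^1$. The issue is that the step you defer as ``bookkeeping'' is in fact the entire content of the paper's proof, and the tentative dictionary you sketch is not quite the right one. The paper pins it down as follows: let $a+b$ be the common length of $W_i$ and $W_{i+1}$, where $b\ge 0$ is the number of boxes $W_{i+1}$ shares with the column to its right and $a\ge 2$ is the number of its boxes with no right neighbour (this $a\ge 2$ is precisely the condition that all columns to the right end at least two rows above row $y$). Since every row meeting $W_i$ also meets $W_{i+1}$, no box of $W_i$ is the rightmost box of its row, so removing $V_1$ leaves $W_i$ at full length $a+b$, while $V_1$ removes exactly the $a$ exposed boxes of $W_{i+1}$, leaving length $b$. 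Hence $\ww^1$ contains entries $\overline w_f=a+b$ and $\overline w_g=b$ with $\overline w_f\ge \overline w_g+2$ and with a strict drop relative to $\ww$ at the second of them, which are the hypotheses of Lemma~\ref{las}; the requirement that some column to the right start strictly above row $x$ is what rules out degenerate cases (e.g.\ a straight shape, where $\sigma^1=\ww$ and the lemma has nothing to bite on) and places these two entries in positions $2\le f<g$ of $\sigma^1$.

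By contrast, your sketch says that both $W_i,W_{i+1}$ ``survive'' into $\lambda^1/\mu$ and that the gap of $2$ is supplied by ``an earlier entry'' coming from the columns to the right of the pair. The first claim is inaccurate: $W_i$ is not shortened at all, only $W_{i+1}$ is, and when $b=0$ it is wholly consumed by $V_1$. The second would fail in general: after removing $V_1$ the columns to the right of $W_{i+1}$ may become arbitrarily short or disappear entirely, so they cannot be relied on to provide the large entry; the large entry must be the untouched column $W_i$ itself. So the plan is the correct one and the symmetry reduction is fine, but the verification of Lemma~\ref{las}'s hypotheses --- the only nontrivial point of the corollary --- is both left unexecuted and, as sketched, aimed at the wrong columns.
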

\begin{proof}
Thanks to the conjugate symmetry in  \eqref{shurP1} and to the $\pi$--rotation symmetry \eqref{shurP2}, it is enough to consider the $F0$ configuration. Denote by $a+b$ the length of column $W_i$, which is also the length of column $W_{i+1}$, of $F0$,  where $b\geq 0$ is the number of boxes that column $W_{i+1}$ shares with the column to its right, and $a\geq 2$ is the number of boxes of $W_{i+1}$ with no right neighbour.

Consider $F0\setminus V_1$  and $\ww^1=(\overline{w}_2,\ldots,\overline{w}_{\ell},w_{\ell+1},\ldots,w_q)$ with $\overline{w}_f=a+b$ and $\overline{w}_g=b$ satisfying $\overline{w}_f\geq \overline{w}_g+2$, for some integers $2\leq f<g\leq \ell\leq q$. By Lemma \ref{las}, it follows that $\sup(F0)$ is strictly contained in the Schur interval.
\end{proof}

\begin{example} (a)
To illustrate the previous corollary, consider the skew diagram $$A=\tiny\young(::::\hfill\hfill,::\hfill\hfill\hfill,:\hfill\hfill\hfill,\hfill\hfill\hfill\hfill,\hfill),$$
\noindent  with  $\ww=(3,3,2,2,2,1)$ and $\nn=(5,4,3,1)$. Clearly,  $A$ is a $F0$ configuration since the third and fourth columns have the same length and they start in the same row,  to its right all columns end two rows above the last row of these columns, and there are columns that start one row above the first row of these columns. Thus, by the previous corollary, the support of $A$ is not the entire Schur interval. Moreover, following the proof of Lemma \ref{las}, we construct the partition  ${\xi}=(5,2^3,1^2)$ which  belongs to the interval $[\ww,\nn]$ but is not an element of $\sup(A)$.

(b)  Consider now the skew--diagram
 $$B=\tiny\young(::::\hfill\hfill,:\hfill\hfill\hfill\hfill\hfill,\hfill\hfill\hfill\hfill\hfill,\hfill\hfill\hfill,\hfill\hfill\hfill).$$

\noindent One has $\ww=443322\preceq \xi= 533322\preceq \sigma=543222$ and $\xi\notin \sup(B)$.

\end{example}
We  now are in conditions to conclude from Remark \ref{cr} and Corollary \ref{dis}

\begin{proposition}\label{rib} If $A$ is a skew diagram with two or more components and the support of $A$ is the whole Schur interval, then the components of $A$ are ribbon shapes. In particular, the Schur function product $s_\mu s_\nu$
  has Littlewood-Richardson coefficients always positive over the full interval only if  $\mu$ and $\nu$ are either rows or columns, or one of the following holds: both hooks, a one-line rectangle and a hook or {\em vice-versa}.

\end{proposition}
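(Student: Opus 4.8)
The plan is to prove Proposition \ref{rib} as a straightforward synthesis of the machinery already assembled. First I would establish the first sentence: let $A$ be a disconnected skew diagram whose support is the full Schur interval $[\ww,\nn]$. If some connected component of $A$ contained a $2\times 2$ block of boxes, then Lemma \ref{disconnected} would immediately give $\sup(A)\subsetneqq[\ww,\nn]$, a contradiction. Hence no component contains a $2\times2$ block; combined with the fact that each component is by definition connected, each component is a ribbon shape (this is exactly the definition of ribbon recalled in Subsection 2.1). This is in fact already recorded as Corollary \ref{dis}, so the first sentence is essentially a restatement; I would phrase it as "combining Corollary \ref{dis} with Remark \ref{cr}" to match the lead-in sentence of the proposition.

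Next I would translate this into the statement about products $s_\mu s_\nu$. The key observation, recalled in the excerpt just before the figure, is that the product $s_\mu s_\nu$ is the skew Schur function of the disconnected diagram $A$ whose two components are the Young diagrams $\mu$ and $\nu$ (placed in the standard anti-diagonal configuration). So if $s_\mu s_\nu$ has all Littlewood--Richardson coefficients positive over the full interval, then $\sup(A)=[\ww,\nn]$, and by the first part both components $\mu$ and $\nu$ must be ribbon shapes. But a ribbon that happens to be a \emph{straight} shape (a Young diagram $\lambda$, not skew) has no $2\times2$ block iff it is a hook. Indeed, a partition $\lambda$ with $\lambda_2\geq 2$ and $\ell(\lambda)\geq 2$ contains the $2\times2$ block in its top-left corner; so a ribbon-shaped partition has either at most one row of length $\geq 2$ (i.e.\ $\lambda$ is a hook, including the degenerate cases of a single row or single column) — that is the complete list. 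Therefore each of $\mu,\nu$ is a hook (allowing rows and columns as degenerate hooks).

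Then I would refine the list to match the stated trichotomy. Having $\mu$ and $\nu$ both hooks is necessary, but the proposition asserts the sharper alternatives: both are rows or columns; or both are hooks; or one is a (one-line) rectangle — i.e.\ a row or column — and the other a hook. In fact these three bullets together just say "$\mu$ and $\nu$ are both hooks," since a row/column is a degenerate hook, so the three cases are not mutually exclusive refinements but a redundant spelling-out; I would simply note that "hook" here includes one-row and one-column rectangles and that the three displayed alternatives exhaust exactly the pairs of hooks, hence the claim follows. (If one wants the cleaner route, one can instead invoke that this necessary condition is also consistent with Corollary \ref{stemp}: among the multiplicity-free configurations $P0$--$P4$, only those with both factors hooks survive the ribbon constraint, but for the present proposition only the necessity direction — a "only if" — is claimed, so Corollary \ref{stemp} is not even needed.)

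The main obstacle, such as it is, is purely bookkeeping: making sure the degenerate cases (empty partition, single row, single column) are handled uniformly under the word "ribbon" and "hook," and being careful that "the components of $A$ are ribbon shapes" genuinely forces, for a \emph{straight}-shaped component, the hook condition — i.e.\ verifying the small claim that a partition is ribbon-shaped exactly when it is a hook. Everything else is a direct appeal to Lemma \ref{disconnected}, Corollary \ref{dis}, Remark \ref{cr}, and the elementary identification $s_\mu s_\nu = s_A$ for the two-component diagram $A$. I would keep the proof to a few lines, citing those results, and spend one sentence on the partition-is-ribbon-iff-hook remark.
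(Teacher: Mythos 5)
Your first sentence is handled exactly as in the paper: it is Corollary \ref{dis}, i.e.\ Lemma \ref{disconnected} applied to a disconnected shape, plus the observation that a connected component with no $2\times 2$ block is by definition a ribbon; and your remark that a straight shape is a ribbon precisely when it is a hook (allowing a single row or column) is correct and is the right bridge to the product case. Up to that point there is nothing to object to.

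The gap is in how you dispose of the second sentence. You read the displayed list as a redundant spelling-out of ``both $\mu$ and $\nu$ are hooks in the broad sense'' and accordingly decide that Remark \ref{cr} need only be cited cosmetically. That is not what the proposition (nor the paper's one-line derivation ``from Remark \ref{cr} and Corollary \ref{dis}'') is asserting: the list is meant to \emph{exclude} the mixed pair in which $\mu$ is a one-row shape and $\nu$ a one-column shape, both with at least two boxes (read ``$\mu$ and $\nu$ are either rows or columns'' as ``both rows or both columns''; compare the corollaries in Section 5, where a row paired with a column never occurs). That exclusion is genuine extra content: the disconnected union of a row of length $a\ge 2$ and a column of length $x\ge 2$ is exactly an $F1$ configuration, and Remark \ref{cr} (via Proposition \ref{prop1}) shows its support is only $\{\ww,\nn\}$ while, e.g., $(x,2,1^{a-2})$ lies strictly inside $[\ww,\nn]$, so this pair does not have all Littlewood--Richardson coefficients positive on the full interval. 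Your argument, which only invokes the ribbon/hook constraint, cannot rule this pair out, so as written it proves a strictly weaker necessary condition than the one stated and later used. The repair is small: after concluding that $\mu$ and $\nu$ are rows, columns or hooks, add the $F1$ argument of Remark \ref{cr} to eliminate the row--column combination with both factors of size at least two (a single box being simultaneously a row and a column, all remaining pairs fall under the stated alternatives).
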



\subsection{Recognition of full  configurations}
Next we examine some particular configurations  which are  needed  to characterise the multiplicity--free skew Schur functions that achieve the full interval. We start with the two row (column) skew diagram. (The one row (column) case is a partition and, therefore, the support is $[\ww,\nn]=\{\ww=\nn\}$.)

\begin{proposition}\label{2col}
{\em (~\cite{az}, Theorem 15)}
If $\lambda/\mu$ has exactly two rows, then $\sup(\lambda/\mu)=[\ww,\nn]$ and it is a saturated chain.
More generally, if $\lambda/\mu$ is a skew diagram with one of the configurations \eqref{ini}, then the support of $\lambda/\mu$ is the entire interval $[\ww,\nn]$, and it is a saturated chain.
\end{proposition}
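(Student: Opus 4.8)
The plan is to settle the two-row case by an explicit description of its Littlewood--Richardson fillings, and then to recover the remaining configurations of \eqref{ini} from it by means of the symmetries \eqref{shurP1}, \eqref{shurP2} and the block reductions of Lemma \ref{bloco}. Throughout I use one elementary remark: an order--interval $[\ww,\nn]$ which happens to be a chain is automatically \emph{saturated}, for if $\gamma\prec\delta\prec\gamma'$ with $\gamma,\gamma'$ consecutive in the chain then $\delta\in[\ww,\nn]$, a contradiction. Hence, once we know that $\sup(A)$ equals $[\ww,\nn]$, it suffices to check that $[\ww,\nn]$ is a chain.

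\emph{The two-row case.} Let $\lambda/\mu$ be basic with both rows nonempty, the top row of length $m$, the bottom row of length $n$, and the two rows overlapping in exactly $k\ge 0$ columns, so that $\ww=(2^{k},1^{m+n-2k})$ and $\nn=(2^{\min(m,n)},1^{|m-n|})$. Every partition in $[\ww,\nn]$ is $\preceq\nn$, hence has all parts $\le 2$, hence equals $\gamma^{j}:=(2^{j},1^{m+n-2j})$ for a unique $j$ with $k\le j\le\min(m,n)$, and $\gamma^{j}\preceq\gamma^{j'}$ iff $j\le j'$; thus $[\ww,\nn]$ is already the chain $\gamma^{k}\prec\gamma^{k+1}\prec\cdots\prec\gamma^{\min(m,n)}$. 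It remains to place every $\gamma^{j}$ in $\sup(\lambda/\mu)$, i.e.\ to show $c^{\lambda}_{\mu,\nu}\ge 1$ for $\nu=(\gamma^{j})'=(m+n-j,j)$. Here an LR filling of $\lambda/\mu$ of content $\nu$ is completely forced: the rightmost box of the top row is the first letter of the reading word, hence carries $1$, and, the top row being weakly increasing, the whole top row is $1^{m}$; column--strictness on the $k$ overlap columns (which are the $k$ rightmost boxes of the bottom row) together with weak increase then force the bottom row to be $1^{\,n-j}2^{\,j}$, forcing $j\ge k$, while the lattice condition on the reading word $1^{m}\,2^{j}\,1^{\,n-j}$ forces $j\le m$; since also $j\le n$, the contents occurring are exactly those with $k\le j\le\min(m,n)$, each with a single (hence LR) filling. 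Therefore $c^{\lambda}_{\mu,\nu}=1$ precisely when $\nu'\in[\ww,\nn]$ and $=0$ otherwise, so $\sup(\lambda/\mu)=[\ww,\nn]$ is a saturated chain; the disconnected instance $k=0$ is just the Pieri expansion $s_{m}s_{n}=\sum_{j=0}^{\min(m,n)}s_{(m+n-j,j)}$. (The same conclusion follows from the Jacobi--Trudi identity $s_{\lambda/\mu}=h_{m}h_{n}-h_{m+n-k+1}h_{k-1}$ with Pieri's rule, or from Algorithm \ref{Proc:alg} applied to the minimum LR filling, since the only strings one may prolong there are the $k$ strings of length $2$ sitting in the overlap columns, each prolongation lifting exactly one box.)

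\emph{The general configurations.} The remaining configurations of \eqref{ini} are obtained from two-row (equivalently, via \eqref{shurP1}, two-column) skew diagrams by $\pi$--rotations, conjugations, and adjunctions of blocks of maximal width or maximal depth. The Schur interval and the support of a diagram coincide with those of its $\pi$--rotation \eqref{shurP2}, and are carried to those of the conjugate diagram by conjugation \eqref{shurP1}, and both operations take chains to chains; so it is enough to treat a single block adjunction and, by conjugation, one of maximal depth, say $B=[u^{\pi}+(x^{n})]\bullet_{n}v$ above $A=u^{\pi}\bullet_{n}v$. Then Lemma \ref{bloco}(b) gives $\sup(B)=\{\,b\cup(n^{x}):b\in\sup(A)\,\}$ and Lemma \ref{bloco}(a) gives $[\ww(B),\nn(B)]=[\,n^{x}\cup\ww(A),\;n^{x}\cup\nn(A)\,]$, so $\sup(A)=[\ww(A),\nn(A)]$ forces $\sup(B)=[\ww(B),\nn(B)]$, which is again a chain, hence saturated. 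Iterating back to the two-row base case completes the proof.

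\emph{Main obstacle.} The genuinely delicate point is the forced-filling argument in the two-row case: one must verify that no content with a nonzero third part can occur --- again because the reading word must begin with $1$ and every column has height $\le 2$, so every $2$ lies in an overlap column and is preceded in the word by enough $1$'s --- and that for each $k\le j\le\min(m,n)$ the unique candidate filling is a genuine LR tableau. Everything afterwards (the chain structure of $[\ww,\nn]$ when all parts are $\le 2$, the transfer across blocks via Lemma \ref{bloco}, and the automatic saturation) is routine bookkeeping.
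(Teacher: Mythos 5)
Your proof is correct, and it is genuinely more self-contained than what the paper does: the paper offers no proof of the two-row case at all (it is quoted from \cite{az}, Theorem 15), and then handles the configurations \eqref{ini} exactly as you do, via Lemma \ref{bloco} together with conjugation and $\pi$--rotation; your explicit forced-filling analysis (top row necessarily $1^m$, bottom row necessarily $1^{n-j}2^{j}$ with $k\le j\le\min(m,n)$, each such content realized by a unique LR tableau) replaces that citation and in fact yields slightly more than the statement asks, namely that a two-row skew Schur function is multiplicity--free with exactly $\min(m,n)-k+1$ terms, in agreement with the Jacobi--Trudi/Pieri aside you give. Two small points are worth tightening. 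In the block-transfer step, Lemma \ref{bloco}(a),(b) alone do not literally give $\sup(B)=[\ww(B),\nn(B)]$: you also need the (easy) remark that every $c\in[n^x\cup\ww(A),\,n^x\cup\nn(A)]$ automatically has its first $x$ parts equal to $n$ --- since all parts of $\ww(A)$ and $\nn(A)$ are at most $n$, dominance forces $c_1+\cdots+c_i=ni$ for $i\le x$ --- so that $c=b\cup(n^x)$ with $b\in[\ww(A),\nn(A)]$ and Lemma \ref{bloco}(b) applies; the paper glosses this in the same way in the paragraph following the proposition, but the one-line justification belongs in a complete argument. Second, the parenthetical claim in your closing paragraph that ``every $2$ lies in an overlap column'' is false for a general LR filling (when $j>k$ some $2$'s occupy non-overlap columns) and is not needed: the absence of a third part in the content follows either from the lattice condition (in the weakly increasing bottom row any $3$ would be read before every $2$) or, more cheaply, from the inclusion $\sup(\lambda/\mu)\subseteq[\ww,\nn]$ already recalled in Section 3, which bounds all parts of the conjugate content by $2$; in any case only the reverse inclusion $[\ww,\nn]\subseteq\sup(\lambda/\mu)$ required proof.
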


Let $\lambda/\mu=(2^a,1^b)/(1^c)$ be a two column skew diagram.
Lemma \ref{bloco} shows that the support of a skew diagram $A=((x+2)^{a+b},(x+1)^c)/(1^a)$,  obtained by inserting a block $(x^{a+b+c})$ of maximal length between the two columns of $\lambda/\mu$, or a skew diagram $B=((n+2)^{a+b},1^c)/((x+1)^a)$ obtained adding the partition $(n^{a+b})$ to  $\lambda$ and  $(n^{a})$ to $\mu$, is again equal to its entire Schur interval.  This construction, together with its conjugates, yields four cases, whose schematic representations  are shown below.  These diagrams have been arranged so that the ones in the right column are the conjugates of those in the left column.

\begin{align}\label{ini}
\parbox{4cm}{\qquad\begin{tikzpicture}
\fill[color=blue!20] (0,1) rectangle (.25,1.75);
\draw (0,0) rectangle (.25,1)  (.25,0) rectangle (1,1.75) (1,1.75) rectangle (1.25,.6);
\draw (0,0) -- (0,1.75) -- (.75,1.75);
\end{tikzpicture}}
\parbox{4cm}{\qquad\begin{tikzpicture}
\fill[color=blue!20] (0,1) rectangle (.75,1.25);
\draw (0,0) rectangle (1.15,.25)  (0,.25) rectangle (1.75,1) (1.75,1) rectangle (.75,1.25);
\draw (0,0) -- (0,1.25) -- (1.75,1.25);
\end{tikzpicture}}\\\nonumber
\\
\parbox{4cm}{\qquad\begin{tikzpicture}
\fill[color=blue!20] (0,1.5) rectangle (1,2);
\draw (0,0) rectangle (0.25,1.5)  (0,.75) rectangle (1.25,1.5) (1,.75) rectangle (1.25,2);
\draw (0,0) -- (0,2) -- (1.25,2);
\end{tikzpicture}}
\parbox{4cm}{\qquad\begin{tikzpicture}
\fill[color=blue!20] (0,.25) rectangle (.5,1.25);
\draw (0,0) rectangle (1.25,.25)  (.5,0) rectangle (1.25,1.25) (.5,1) rectangle (2,1.25);
\draw (0,0) -- (0,1.25) -- (2,1.25);
\end{tikzpicture}}.\nonumber
\end{align}

{\em  A two column or a two row disconnected skew diagram  is called an $A1$ configuration},

\begin{align}
\parbox{4cm}{\qquad
\begin{tikzpicture}
\fill[color=blue!20] (0,.25) rectangle (1,.5);\draw (0,.25) rectangle (1,.5);
\draw (0,0) rectangle (1,.25);\draw (1,.25) rectangle (2.25,.5);
\end{tikzpicture}.}
\end{align}

Skew Schur functions whose  shapes are strips made either of columns or rows always attain the full interval.
Let $e_n$ be the elementary symmetric function and $h_n$ the complete homogeneous symmetric function of degree $n$. Then
$s_{(1^n)}=\sum_{{i_1}<\ldots<i_n} x_{i_1}\ldots x_{i_n}=e_n,\quad s_n=\sum_{{i_1}\le \ldots\le i_n} x_{i_1}\ldots x_{i_n}=h_n.$
Let $\mu=(\mu_1,\ldots,\mu_l)$ be a partition  and $A=(1^{\mu_1})\oplus\cdots \oplus(1^{\mu_l})$. The Schur interval of $A$ is $[\mu,({|\mu|})]$.
It is known \cite{ful,stanley} that
$s_A=e_\mu:=e_{\mu_1}e_{\mu_2}\ldots e_{\mu_l}=\sum_{\lambda}K_{\lambda,\mu}s_{\lambda'}.$
As the Kostka number $K_{\lambda,\mu}\neq 0$ if and only if $\mu\preceq \lambda$, equivalently, $\lambda\in[\mu,({|\mu|})]$. Then
$s_A=\sum_{\lambda\in[\mu,(1^{|\mu|})]}K_{\lambda,\mu}s_{\lambda'},$ and $\sup(A)$ equals its Schur interval. Thus $\sup(A')$ or the support of $h_\mu$
is its Schur interval $[(1^{|\mu|}), (\mu')]$. Therefore, {\em if
 $A$ is either a direct sum of columns or of rows,
 the support of $A$ equals its Schur interval.}

Consider now skew diagrams $\lambda/\mu$  with three rows (columns)  where $\mu=(d+c)$ is a one row (column) rectangle and $\lambda^*=(a+b+c,a)$ is a fat hook, or vice versa, for some integers $a,d\geq 1$ and $b,c\geq 0$. There are four cases, as illustrated by:

\begin{align}\label{a3}
\parbox{6cm}{$F2$\quad\begin{tikzpicture}
\fill[color=blue!20] (0,.5) rectangle (1.5,.75);
\draw (0,0) rectangle (.75,.5)  (0,.25) rectangle (2.25,.5) (1.5,.25) rectangle (2.25,.75) (2.25,.5) rectangle (3,.75);
\draw (0,0) -- (0,.75) -- (2,.75);
\node at (2.6,.9) {$a$};\node at (1.9,.95) {$b$};\node at (.4,.95) {$d$};\node at (1.1,.9) {$c$};
\end{tikzpicture}}
\parbox{6cm}{$F2^{\pi}$\quad\begin{tikzpicture}
\fill[color=blue!20] (0,.25) rectangle (.75,.75);
\fill[color=blue!20] (.75,.5) rectangle (2.25,.75);
\draw (0,0) rectangle (1.5,.25)  (.75,.25) rectangle (3,.5) (2.25,.25) rectangle (3,.75) (.75,0) rectangle (1.5,.5);
\draw (0,0) -- (0,.75) -- (2.25,.75);
\node at (2.6,.95) {$d$};\node at (1.9,.9) {$c$};\node at (.4,.9) {$a$};\node at (1.1,.95) {$b$};
\end{tikzpicture}}\nonumber\\
\\
\parbox{6cm}{$F2'$\quad\begin{tikzpicture}
\fill[color=blue!20] (0,1.5) rectangle (.25,3);
\draw (0,0) rectangle (0.25,1.5)  (.25,1.5) rectangle (.5,3) (.25,2.25) rectangle (.75,3) (0,.75) rectangle (.5,1.5);
\draw (0,0) -- (0,3) -- (.75,3);
\node at (.4,.4) {$a$};\node at (.65,1.1) {$b$};\node at (.65,1.85) {$c$};\node at (.9,2.6) {$d$};
\end{tikzpicture}}
\parbox{6cm}{$F2^{\pi'}$\quad\begin{tikzpicture}
\fill[color=blue!20] (0,.75) rectangle (.25,3);
\fill[color=blue!20] (.25,2.25) rectangle (.5,3);
\draw (0,0) rectangle (0.5,.75)  (.25,0) rectangle (.5,2.25) (.5,1.5) rectangle (.75,3) (.25,1.5) rectangle (.75,2.25);
\draw (0,0) -- (0,3) -- (.75,3);
\node at (.65,.4) {$d$};\node at (.65,1.1) {$c$};\node at (.9,1.85) {$b$};\node at (.9,2.6) {$a$};
\end{tikzpicture}.}\nonumber
\end{align}
The diagrams have been arranged so that those on the right hand are the $\pi$--rotations of the diagrams in the left hand, and the diagrams in the second row are the conjugates of the ones in the first row.
\medskip

{\em If these diagrams \eqref{a3} satisfy the additional conditions $a\leq c+1$ and $d\leq b+1$, then they are called $A2, A2^{\pi},  A2'$ and $A2^{\pi'}$ configurations, as illustrated above, replacing the letter $F$ by $A$.}

\medskip

\begin{proposition}\label{prop3}
Let $\lambda/\mu$ be one of the skew diagrams \eqref{a3}. Then, the support of $\lambda/\mu$  coincides with its Schur interval if and only if it is an $A2, A2^{\pi},  A2'$ or a $A2^{\pi'}$ configuration.
\end{proposition}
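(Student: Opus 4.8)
The plan is first to reduce to the single configuration $F2$ and then to prove the two implications separately. Conjugation and $\pi$--rotation preserve the property ``$\sup=$ Schur interval'' and permute the four diagrams of \eqref{a3}, carrying the constraint $a\le c+1,\ d\le b+1$ to its analogue for $F2^{\pi},F2'$ and $F2^{\pi'}$; so it is enough to treat an $F2$ diagram, and I will write $\ww,\nn$ for the extremes of its Schur interval and use the chain $\ww\preceq\sigma^1=(n_1)\cup\ww^1\preceq\nn$ of \eqref{chain}.

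For the ``if'' direction I would assume $a\le c+1$ and $d\le b+1$ and strip $\lambda/\mu$ down to a shape whose support is already known. Using Lemma \ref{bloco}, one deletes every maximal--depth block of $F2$ (the full--height columns in the fat--hook part) without changing whether $\sup$ equals the Schur interval. The two inequalities then force the residual diagram to have its longest columns at most one box taller than the next longest, and --- after also sliding any disconnected components, which is free for skew Schur functions --- what remains is a two--row skew diagram, a two--column skew diagram, or one of the configurations \eqref{ini}. By Proposition \ref{2col} each of these has support equal to its Schur interval, and by Lemma \ref{bloco}(a) the deleted blocks transport the Schur interval coordinatewise back up; hence $\sup(\lambda/\mu)=[\ww,\nn]$. (Alternatively one may run Algorithm \ref{Proc:alg} on the residual core directly: the constraints $a\le c+1,\ d\le b+1$ keep the single fat--hook step shallow enough that at every branch of the algorithm the string prolongation one needs is available, so every partition of $[\ww,\nn]$ gets produced.)

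For the ``only if'' direction I would argue the contrapositive: if the $A2$ constraint fails, say $a\ge c+2$ (the case $d\ge b+2$ being symmetric through the conjugate configuration), then $\sup(\lambda/\mu)\subsetneqq[\ww,\nn]$. Reading $\ww=cols(\lambda/\mu)$ off the diagram, I would produce an explicit $\xi$ --- obtained from $\ww$ by lifting a single box so as to lengthen one of the long columns by one --- and check, by comparing partial sums, that $a\ge c+2$ is exactly what keeps $\xi$ below $\nn$, so that $\ww\preceq\xi\preceq\nn$. To see $\xi\notin\sup(\lambda/\mu)$ one verifies that $\xi'$ cannot be the content of any LR filling of $\lambda/\mu$: in every attempt the lattice word is forced to begin with a letter greater than $1$ or, equivalently, in Algorithm \ref{Proc:alg} the string that would have to be prolonged across the fat--hook step finds no vacant vertical space and no rearrangement of the strings already placed creates any. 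The degenerate sub--cases where the fat hook collapses to a rectangle, or where $\ww=\nn$, are already covered by Proposition \ref{lemma} and Proposition \ref{prop1}.

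The hard part is the ``if'' direction: one must certify, uniformly over the whole two--dimensional interval $[\ww,\nn]$, that \emph{every} partition there is realised --- not merely those lying on some saturated chain from $\ww$ to $\nn$, which is strictly weaker. The bookkeeping in the reduction to \eqref{ini} is where this bites: for each of the four $F2$--variants one has to identify precisely which maximal blocks to remove and then verify the residual core is on the list of Proposition \ref{2col}, and it is exactly here that the inequalities $a\le c+1$ and $d\le b+1$ are used.
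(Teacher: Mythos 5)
Your ``if'' direction rests on a reduction that is not available. An $A2$ configuration (say in the $F2$ orientation, rows of lengths $a+b$, $b+c+d$, $d$ with the top row indented by $c+d$ and $a,d\geq 1$) has \emph{no} blocks of maximal depth or maximal width: no column meets all three rows (that would force $c\leq -1$) and no row has full width $a+b+c+d$ (that would force $a=0$ or $c+d=0$). So Lemma \ref{bloco} removes nothing, the shape stays a genuinely three--row (three--column) diagram, and it is neither a two--row/two--column diagram nor one of the configurations \eqref{ini}; Proposition \ref{2col} does not apply to it directly. Your parenthetical fallback --- ``run Algorithm \ref{Proc:alg} and note that every needed prolongation is available'' --- is exactly the statement to be proved, not an argument. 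What the paper actually does (in the $F2'$ orientation) is: write any $\xi=(\xi_1,\xi_2,\xi_3)\in[\ww,\nn]$ with $\ww=(d+c+b,b+a,d)$, $\nn=(d+c+b+a,d+b)$, parametrize $\xi_1=d+c+b+k$ with $0\le k\le a$, place the forced $\xi_1$--string, observe that what remains is a \emph{two--column} shape to which Proposition \ref{2col} does apply, and then verify the compatibility bound $\xi_2\le d+a+b-k<d+c+b+1$, which is precisely where $a\le c+1$ enters. That one--string--stripping step, reducing to the two--column case, is the missing idea in your proposal.

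The ``only if'' direction also has a gap: the two failure cases are not exchanged by any of the allowed symmetries. Conjugation and $\pi$--rotation carry the labels $a,b,c,d$ along with the four pictures in \eqref{a3}, so each of the inequalities $a\le c+1$ and $d\le b+1$ is preserved separately; the case $d\ge b+2$ cannot be deduced ``through the conjugate configuration'' from $a\ge c+2$ and needs its own argument (the paper applies Lemma \ref{las} to $\sigma^1=(a+b+c+d,d,b)$, using $d\ge b+2$). Moreover your witness for $a\ge c+2$, a single box lifted from $\ww$, is not the right one in general: the partition that actually falls outside the support is $\xi=(d+c+b+k,\,d+b+c+1)$ with $k=a-(c+1)$ (equivalently, its conjugate for the row version), which differs from $\ww$ by $c+1$ boxes, and you give no verification that your one--box lift fails to be an LR content. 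So both implications need to be reworked along the lines above.
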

\begin{proof} By assumption $a, d\ge 1$ and $b,c\ge 0$. When $a=0$ or $d=0$ we are in the case of two columns (rows) were already studied in Proposition \ref{2col}.
Thanks to the rotation and conjugation symmetry, we only consider case $F2'$. We will start by showing that when $a>c+1$ or $d>b+1$, the support of $\lambda/\mu$ is not the entire Schur interval.
For the first case,  $a>c+1$, just note that with $k:=a-(c+1)$, the partition
$$\xi=(d+c+b+k,d+b+c+1)$$
belongs to the Schur interval of $\lambda/\mu$, since the first entry of $\ww$ and $\nn$ is $\min\{d+c+b,b+a\}$ and $d+c+b+a$, respectively.
Moreover, $\xi\notin\sup(\lambda/\mu)$, since when placing in $\lambda/\mu$ the string of length $d+c+b+k$ we must place the integer $d+b+c+1$ in the first column of the diagram, leaving no room to place the second string.

For the case $d>b+1$ note that the entries of $\ww$ are $b+c+d, a+b$ and $d$, and that $\sigma=(a+b+c+d,d,b)$ with $d\geq b+2$. By Lemma \ref{las} it follows that the support of $\lambda/\mu$ is not the entire Schur interval.

Thus, when $a>c+1$ or $d>b+1$ the support of $\lambda/\mu$ does not coincide with its Schur interval.
For the rest of the proof assume $a\leq c+1$ and $d\leq b+1$. Then,
$$\ww=(d+c+b,b+a,d)\preceq\nn=(d+c+b+a,d+b).$$
Let $\xi=(\xi_1,\xi_2,\xi_3)\in[\ww,\nn]$. We will show that $\xi\in\sup(\lambda/\mu)$. Since $\ww\preceq \xi\preceq \nn$, we must have $$d+c+b\leq\xi_1\leq d+c+b+a\quad\text{ and }\quad d+c+2b+a\leq\xi_1+\xi_2\leq 2d+c+2b+a.$$
Then $\xi_1=d+c+b+k$ and  $a+b-k\leq\xi_2\leq d+a+b-k$, for some $k\in\{0,\ldots,a\}$.
 From the Algorithm \ref{Proc:alg},
 this means that, for each $k\in\{0,\ldots,a\}$, after placing (in the unique possible way) the string of length $\xi_1$ in the diagram $\lambda/\mu$, we must insert the strings of length $\xi_2,\xi_3$ in the skew diagram $\tilde \lambda/\mu$, obtained by removing the boxes of the string of length $\xi_1$. The Schur interval of $\tilde \lambda/\mu$ is $[\tilde\ww=(b+a-k,d),\tilde\nn=(d+b+a-k)]$. Since $\tilde \lambda/\mu$ has two columns, from Proposition \ref{2col}, $[\tilde\ww=(b+a-k,d),\tilde\nn=(d+b+a-k)]=\sup(\tilde \lambda/\mu)$.
 On the other hand, $\tilde\ww\preceq(\xi_2,\xi_3)\preceq\tilde\nn$, we have $(\xi_2,\xi_3)\in\sup(\tilde\lambda/\mu)$. Finally, note that when $k\geq 0$,
from the inequality $a-k<c+1$, it follows that $\xi_2\leq d+b+a-k<d+c+b+1$.
Therefore we also have $\xi\in\sup(\lambda/\mu)$.
\end{proof}

\begin{example}
The skew diagram $\lambda/\mu=\tiny\young(::::::\hfill,::\hfill\hfill\hfill\hfill\hfill,\hfill\hfill\hfill\hfill)$ is an $A2^{\pi}$ configuration with
$d=1$ and $c=b=a=2$. Therefore, its Schur interval coincides with its support.
\end{example}


The next configuration that we consider is the ribbon skew diagram $\lambda/\mu$, with $\mu=((a+b+1)^{x},a^y)$ and $\lambda^*=((b+1)^{y+1})$, for some integers $a,b,x,y\geq1$, as illustrated by:
\begin{equation}\label{snake}
F3\quad\begin{tikzpicture}
\fill[color=blue!20] (0,.25) rectangle (.75,1.75);
\fill[color=blue!20] (.75,1) rectangle (1.75,1.75);
\draw (0,0) rectangle (1,.25)  (.75,0) rectangle (1,1) (.75,.75) rectangle (2,1) (1.75,.75) rectangle (2,1.75);
\draw (0,0) -- (0,1.75) -- (2,1.75);
\node at (2.15,1.4) {$x$};\node at (1.15,.47) {$y$};\node at (.4,.4) {$a$};\node at (1.35,1.2) {$b$};
\end{tikzpicture}.
\end{equation}
\medskip

{\em A $F3$ configuration \eqref{snake} with  $a=x=1$, or $a=1$ and $x\leq y+1$, or $a\leq b+1$ and $x=1$,   is called an $A3$ configuration.}
\medskip

\begin{proposition}\label{f3}
Let $\lambda/\mu$ be a skew diagram with a configuration \eqref{snake}. Then its support equals its Schur interval if and only if it is an $A3$ configuration. Moreover, when $a=x=1$,  the support of $A3$ is $[\ww,\nn]=\{\ww,\xi_2=(y+2,2,2,1^{b-1}),\xi_3=(y+2,3,1^b),\xi_1=(y+3,1^{b+2}),\nn\}$, and the skew Schur function $s_{\lambda/\mu}=s_{\ww'}+s_{\xi_1'}+s_{\xi_2'}+s_{\xi_3'}+s_{\nn'}$ has exactly five components all with multiplicity $1$.
\end{proposition}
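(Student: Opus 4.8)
The plan is to prove the two implications of the stated equivalence separately and then to obtain the ``Moreover'' clause from the case $a=x=1$ together with Theorem \ref{schurfree}. Everything rests on one routine preliminary computation: for the shape \eqref{snake} one writes down $\ww=cols(\lambda/\mu)$, $\nn=rows(\lambda/\mu)'$ and the first step $\sigma^1=(n_1)\cup\ww^1$ of the chain \eqref{chain}, by reading off the rows and columns of $F3$ and of $F3\setminus V_1$.

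For the direction ``$\lambda/\mu$ not an $A3$ configuration $\Rightarrow \sup(\lambda/\mu)\subsetneqq[\ww,\nn]$'' I negate the clauses defining $A3$ and obtain three mutually exclusive cases: (i) $x\ge 2$ and $a\ge 2$; (ii) $x=1$ and $a\ge b+2$; (iii) $x\ge 2$, $a=1$ and $x\ge y+2$. In case (iii), precisely because $x\ge y+2$, the two entries of $\ww^1$ coming from the width-$a$ part and from the height-$x$ part of the ribbon differ by at least $2$ while both remain strictly below the corresponding entries of $\ww$; Lemma \ref{las} then applies and produces $\xi\in[\ww,\nn]\setminus\sup(\lambda/\mu)$. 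In case (ii) one argues as in the degenerate subcases $a>c+1$ / $d>b+1$ in the proof of Proposition \ref{prop3}: the longest string of $\sigma^1$ admits a unique placement in $F3$, which either forces its successor into the first column (so a partition of $m^n$--shortness $2$ is missed) or puts $\sigma^1$ into the position required by Lemma \ref{las}. In case (i), since both $a\ge 2$ and $x\ge 2$, deleting the vertical strips $V_1,\dots,V_k$ for a suitable $k$ (Lemma \ref{induction}) leaves either a disconnected diagram one of whose components contains a $2\times 2$ block (Lemma \ref{disconnected}) or an $F0$ configuration (Corollary \ref{TF3}); alternatively one peels off a maximal block via Lemma \ref{bloco} and inspects a minimal instance directly.

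For the converse, ``$A3 \Rightarrow \sup(\lambda/\mu)=[\ww,\nn]$'', I use that $F3$ is a ribbon and that, by Lemma \ref{bloco}, maximal blocks may be removed without affecting whether the support exhausts the interval. When $x=1$ and $a\le b+1$ the residual diagram is a ribbon made of rows, and I invoke the criterion of Theorem \ref{ribb}: because $x=1$ and $a\le b+1$, no partition of $[s]$ into $S\sqcup B\sqcup\{k\}$ with $\mathcal{I}(S)\ge 1$ can satisfy the three displayed inequalities, so the support is the whole interval. When $a=1$ and $x\le y+1$ I run Algorithm \ref{Proc:alg} from the minimum LR filling: the only steps whose output fails to cover its input in dominance are those rebalancing strings inside the height-$x$ block, and the bound $x\le y+1$ ensures that every partition of each such ``suspicious'' sub-interval is in fact produced; equivalently, after a maximal-block strip this case reduces to a two-row diagram or to an $A2$-configuration, already settled in Propositions \ref{2col} and \ref{prop3}.

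Finally, let $a=x=1$. Then $\mu=(b+2,1^y)$ is a hook and $\lambda^*$ is a rectangle, so $s_{\lambda/\mu}$ is multiplicity--free by Theorem \ref{schurfree}; equivalently, each admissible conjugate content is carried by a unique LR tableau of shape $\lambda/\mu$. By the first part the support is all of $[\ww,\nn]$, so it only remains to describe this interval. Since $\xi_2\preceq\xi_3$ while $\xi_1$ is comparable to neither, and the interval is closed under meet and join, its endpoints are $\xi_1\wedge\xi_2$ and $\xi_1\vee\xi_3$, which compute to $\ww=(y+2,2,1^{b+1})$ and $\nn=(y+3,2,1^b)$; a short dominance check then shows that a partition $\nu\vdash y+b+5$ lies in $[\ww,\nn]$ iff $\nu_1\le y+3$, $\nu_1+\nu_2\le y+5$, $\ell(\nu)\le b+3$, $\nu_1\ge y+2$, $\nu_1+\nu_2\ge y+4$ and $\nu_1+\nu_2+\nu_3\ge y+5$, leaving exactly $\ww,\xi_2,\xi_3,\xi_1,\nn$. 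Combined with multiplicity--freeness, this gives the asserted five-term expansion. The step I expect to be the real obstacle is case (i) of the ``only if'' direction: unlike (ii) and (iii), it is not a single appeal to one lemma but requires choosing how far to cut the ribbon so that the residual diagram is visibly one of the bad configurations of Subsection 4.1, and then verifying that it genuinely is.
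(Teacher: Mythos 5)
Your case split of the negation of $A3$ is correct, and the final ``Moreover'' computation of the five-element interval is fine, but the arguments you give for almost every case do not go through for the shape $F3$. The root problem in cases (ii) and (iii) is that Lemma \ref{las} is simply inapplicable here: removing the first vertical strip $V_1$ from $F3$ deletes the whole top column, all the single boxes of the left column, and the rightmost boxes of the two long rows, so $A\setminus V_1$ is a disjoint union of two rows and $\ww^1=(1^{a+b+1})$ has all parts equal to $1$. There are no two entries of $\ww^1$ differing by $2$ (in particular your claim that the ``width--$a$ part and the height--$x$ part'' contribute entries differing by at least $2$ is false when $a=1$, since the top column is entirely contained in $V_1$), and since $A\setminus V_1$ is a direct sum of rows it has full support, so Lemma \ref{induction} yields nothing either. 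The paper instead handles $a=1,\ x>y+1$ by restricting to the three-column subdiagram $B$ (first two columns plus the last), which is an $F2^{\pi'}$ configuration, invoking Proposition \ref{prop3} to get a bad pair $(\sigma_1,\sigma_2)$, and padding it to $(\sigma_1,\sigma_2,1^b)$; the case $x=1,\ a>b+1$ is then the conjugate. Case (i) fails for a similar reason: $F3$ is a ribbon, so it contains no $2\times 2$ block, it is not an $F0$, $F0'$, $F0^\pi$ or $F0^{\pi'}$ configuration, peeling vertical strips only ever produces disjoint unions of rows, and there is no maximal block to remove via Lemma \ref{bloco}; ironically this is the \emph{easy} case in the paper, settled by exhibiting the explicit partition $\xi=(w_1,w_2,3,1^{a+b-3})$, which lies in $[\ww,\nn]$ because $\min\{b+1,a\}\ge 2$ but cannot be an LR content.

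The converse direction has gaps of the same kind. Theorem \ref{ribb} applies only to ribbons made solely of columns or solely of rows (every row, respectively column, of length at most $2$), and $F3$ has a row of length $b+2\ge 3$ \emph{and} a column of length $y+2\ge 3$, so the criterion cannot be invoked for the case $x=1,\ a\le b+1$; nor is your alternative reduction available, since an $A3$ shape with $a=1$ is not ``a two-row diagram or an $A2$ configuration up to a maximal block'' (no such block exists in the ribbon), and the assertion that the only non-covering steps of Algorithm \ref{Proc:alg} are those ``inside the height-$x$ block'' is stated without proof. The paper's argument for $a=1,\ x\le y+1$ is a genuine placement analysis: every $\xi$ in the interval has the shape $(\xi_1,\xi_2,\xi_3,1^b)$ with $\xi_1+\xi_2\in\{x+y+3,x+y+4\}$, and one realises $(\xi_1,\xi_2)$ inside either the two-column subdiagram (Proposition \ref{2col}) or the $F2^{\pi'}$ subdiagram $B$ (Proposition \ref{prop3}), which is exactly where the hypothesis $x\le y+1$ is used. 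So while your overall skeleton (negate $A3$, treat the three cases, use conjugation, finish the $a=x=1$ case by multiplicity-freeness plus an interval count) matches the paper's, the core steps as written would fail and need to be replaced by the subdiagram arguments above.
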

\begin{proof}
We start by noticing that when both integers $a$ and $x$ are strictly greater than $1$, then the minimum and maximum of the $\sup(\lambda/\mu)$
are given by   $$\ww=(w_1,w_2,1^{a+b})\preceq\nn=(x+y+2,2^{\min\{b+1,a\}},1^{a+b+1-2\min\{b+1,a\}}),$$
with $w_1=\max\{y+2,x+1\}$, $w_2=\min\{y+2,x+1\}$ and $\min\{b+1,a\}\geq 2$. Therefore, we can consider the partition
$$\xi:=(w_1,w_2,3,1^{a+b-3}).$$ It is straightforward to check that $\ww\preceq\xi\preceq\nn$ and that $\xi$ is not in the support of $\lambda/\mu$.

In the case $a=x=1$, in which the minimum and maximum of the support are given by
$$\ww=(y+2,2,1^{b+1})\preceq\nn=(y+3,2,1^b),$$
the Schur interval is  $[\ww,\nn]=\{\ww,(y+3,1^{b+2}),(y+2,2,2,1^{b-1}),(y+2,3,1^b),\nn\}$, and we can check directly that this interval is equal to $\sup(\lambda/\mu)$.

So we are left with the case $a=1$ and $x>1$, since the remaining case is obtained by the conjugation symmetry. The minimum and maximum  of the support are
$$\ww=(w_1,w_2,1^{b+1})\preceq\nn=(x+y+2,2,1^b),$$ where $w_1$ and $w_2$ are defined as in the initial case. Thus, if $\xi$ is a partition  in the Schur interval, it
must satisfy $\xi=(\xi_1,\xi_2,\xi_3,1^b)$, for some integers $\xi_i$. Moreover, we must have
$$\begin{matrix}
\max\{y+2,x+1\}&\le&\xi_1&\le&x+y+2,\\
x+y+3&\le&\xi_1+\xi_2&\le&x+y+4,\\
&&\text{and}&&\\
x+y+4&\le&\xi_1+\xi_2+\xi_3&\le &x+y+5.
\end{matrix}$$
There are two possibilities for the sum $\xi_1+\xi_2$. As $\xi_3\ge 1$, when this sum equals $x+y+3$, it follows that $\xi_3$ must be either 1 or 2, and when $\xi_1+\xi_2=x+y+4$ then $\xi_3=1$. Consider $\xi_1+\xi_2=x+y+3$.  We have $\tilde\ww_1:=(w_1,w_2)\preceq (\xi_1,\xi_2)\preceq \tilde\nn_1:=(x+y+2,1)$, with $\tilde\ww_1$ and $\tilde\nn_1$, respectively, the minimum and maximum LR fillings of the two column skew diagram $\tilde A$ obtained from $\lambda/\mu$ by removing all except the second and last columns. Since, by Proposition \ref{2col}, we have $\sup(\tilde A)=[\tilde\ww_1,\tilde\nn_1]$, we conclude that we may place  strings of length $\xi_1$ and $\xi_2$ using only the second and last columns of $\lambda/\mu$. As the remaining entries of $\xi$ are equal to $1$, and,  at most one, equal to 2,  it follows that $\xi\in\sup(\lambda/\mu)$.

Assume now that $\xi_1+\xi_2=x+y+4$. In this case, $(\xi_1,\xi_2)\in[\tilde\ww_2,\tilde\nn_2]$, where $\tilde\ww_2=(w_1,w_2,1)$ and $\tilde\nn_2=(x+y+2,2)$ are the minimum and maximum LR fillings of the skew diagram $B$ obtained from $\lambda/\mu$ removing all columns except  the first two and the last one.  Note that $B$ is an $F2^{\pi'}$ \eqref{a3}, and, by Proposition  \ref{prop3}, $\sup(B)=[\tilde\ww_2,\tilde\nn_2]$ if and only if $x\leq y+1$.
Thus, when $x\leq y+1$ we find that $(\xi_1,\xi_2)\in\sup(B)$ and similarly,  as before, it follows that $\xi\in\sup(\lambda/\mu)$. If, on the other hand, we have $x>y+1$, then, by Proposition \ref{prop3}, we can consider a partition $(\sigma_1,\sigma_2)\in[\tilde\ww_2,\tilde\nn_2]$ which is not in the set $\sup(B)$. It follows that $\sigma:=(\sigma_1,\sigma_2,1^b)\in[\ww,\nn]$ but $\sigma\notin\sup(\lambda/\mu)$.
\end{proof}


In the next lemmas we analyse some families of skew diagrams  needed in the sequel.
We start with skew diagrams $\lambda/\mu$ of types $F4$ and $\widetilde{F}4$ defined respectively by the partitions
$\lambda=((a+2)^x,a+1,1^y)$  and $\mu=((a+1)^x)$ for some $a,\,x,\,y\geq 1$ such that not both $x$ and $y$ are equal to 1,
and  by the partitions
$\lambda=((a+b+1)^x,a+b,a) $ and $\mu=((a+b)^x)$, for some integers $b\geq 1$ and $a,x>1$,  as illustrated by:

\begin{equation}\label{pc1}
\parbox{5cm}{$F4$\quad\begin{tikzpicture}
\fill[color=blue!20] (0,1) rectangle (1,1.75);
\draw (0,0) rectangle (.25,1)  (0,.75) rectangle (1,1) (1,1) rectangle (1.25,1.75);
\draw (0,0) -- (0,1.75) -- (1.25,1.75);
\node at (1.4,1.4) {$x$};\node at (.4,.4) {$y$};\node at (.6,1.15) {$a$};
\end{tikzpicture}}
\parbox{5cm}{$\widetilde{F}4$\quad\begin{tikzpicture}
\fill[color=blue!20] (0,.5) rectangle (2,1.75);
\draw (0,0) rectangle (1,.5)  (0,.25) rectangle (2,.5) (2,.5) rectangle (2.25,1.75);
\draw (0,0) -- (0,1.75) -- (2.25,1.75);
\node at (2.4,1.1) {$x$};\node at (.5,.65) {$a$};\node at (1.5,.7) {$b$};
\end{tikzpicture}.}
\end{equation}
Note that if we let $x=y=1$ in an $F4$ configuration, or $x=1$ in an $\widetilde{F}4$ configuration, then we get an $F2$ configuration.
 \medskip

 {\em An $F4$ configuration with $a=1$ and $x\leq y+1$, or $a\geq 2$ and $x=1$, is called an $A4$ configuration.}

\medskip

\begin{proposition}\label{f4}
$(i)$ If $\lambda/\mu$ is a skew diagram with configuration $F4$, then its support is equal to  the Schur interval if and only if it is an $A4$ configuration. Moreover, when $a\geq 2$ and $x=1$, the support of $A4$ is $[\ww,\nn]=\{\ww,\xi=(y+1,2,1^{a-1}),\nn\}$ and the skew Schur function
$s_{\lambda/\mu}=s_{\ww'}+s_{\xi'}+s_{\nn'}$ has exactly three components all with multiplicity $1$.

$(ii)$ The support of  $\widetilde{F}4$  is strictly contained in the Schur interval.
\end{proposition}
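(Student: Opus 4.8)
The plan is to reduce both configurations to honest products of Schur functions via disconnectedness, and then to use the Pieri rule together with a direct comparison of the (small) support with the Schur interval.

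First I would record the decompositions. In an $F4$ configuration the top $x$ rows contribute one box each in column $a+2$, and this column of length $x$ has no box directly below its last box (row $x+1$ has length only $a+1$); hence $\lambda/\mu$ is disconnected with components the column $(1^{x})$ and the hook $(a+1,1^{y})$, so $s_{\lambda/\mu}=s_{(1^{x})}\,s_{(a+1,1^{y})}$. By the Pieri rule $supp(\lambda/\mu)$ is exactly the set of conjugates of the partitions obtained from $(a+1,1^{y})$ by adjoining a vertical strip of $x$ boxes, and all Littlewood--Richardson coefficients equal $1$ (in accordance with Corollary~\ref{stemp}, since $(1^x)$ is a one--line rectangle); concretely these $\theta$ are the $(a+1+\varepsilon,2^{j},1^{y-j+m})$ with $\varepsilon\in\{0,1\}$, $0\le j\le y$, $m\ge0$, $\varepsilon+j+m=x$, whose conjugates are $(y+m+1,\,j+1,\,1^{a-1+\varepsilon})$. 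Likewise $\widetilde F4$ is disconnected with components the column $(1^{x})$ and the fat hook $(a+b,a)$. This makes part $(ii)$ immediate: since $a>1$ and $b\ge1$, the component $(a+b,a)$ contains a $2\times2$ block of boxes, so by Lemma~\ref{disconnected} the support of $\widetilde F4$ is strictly contained in its Schur interval.

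For part $(i)$ I would first read off $\ww$ and $\nn$: the columns of $F4$ have lengths $x$, $y+1$ and $1$ (with multiplicity $a$), so $\ww$ is the decreasing rearrangement of $(x,y+1,1^{a})$, while the rows have lengths $a+1$ and $1$ (with multiplicity $x+y$), so $\nn=(x+y+1,1^{a})$. For the ``only if'' direction, assume $\lambda/\mu$ is not an $A4$ configuration, i.e.\ either $a\ge2$ and $x\ge2$, or $a=1$ and $x\ge y+2$. In the first case $\xi=(x+y-1,2,2,1^{a-2})$ lies in $[\ww,\nn]$ but has three parts $\ge2$, whereas every element of $supp(\lambda/\mu)$ has at most two (all parts from the third on are $1$); in the second case $\xi=(x,y+2)$ lies in $[\ww,\nn]$ but its second part is $y+2$, whereas every element of $supp(\lambda/\mu)$ has second part $\le y+1$ (adjoining a vertical strip to the hook $(2,1^{y})$ adds at most one box to each of its $y$ rows of length one). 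In either case $\xi\notin supp(\lambda/\mu)$, so $supp(\lambda/\mu)\subsetneq[\ww,\nn]$. For the ``if'' direction I would split the $A4$ case. If $a\ge2$ and $x=1$, then $\lambda/\mu=(1)\oplus(a+1,1^{y})$ and Pieri gives $s_{\lambda/\mu}=s_{(a+2,1^{y})}+s_{(a+1,2,1^{y-1})}+s_{(a+1,1^{y+1})}$; conjugating, $supp(\lambda/\mu)=\{(y+1,1^{a+1}),(y+1,2,1^{a-1}),(y+2,1^{a})\}$, and one checks that $[\ww,\nn]=[(y+1,1^{a+1}),(y+2,1^{a})]$ consists of exactly these three partitions, giving both $supp=[\ww,\nn]$ and the stated three components with $\xi=(y+1,2,1^{a-1})$. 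If $a=1$ and $x\le y+1$, then $\ww=(y+1,x,1)$, so any $\xi\in[\ww,\nn]$ has at most three parts with $\xi_{3}\le1$; a short case split on $\xi_{3}\in\{0,1\}$ --- using $x\le y+1$ to see that $\xi_{1}\ge\xi_{2}$ forces $\xi_{2}\le y+1$ --- exhibits each such $\xi$ as the conjugate of some $\theta=(2+\varepsilon,2^{j},1^{y-j+m})$ as above, hence in $supp(\lambda/\mu)$, whence $supp(\lambda/\mu)=[\ww,\nn]$.

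The main obstacle is the ``if'' direction of $(i)$, in particular the family $a=1$, $x\le y+1$, where the Schur interval is no longer a chain and one must pin down precisely which partitions of the prescribed shape are reached by a vertical--strip enlargement of $(2,1^{y})$; the bookkeeping among the parameters $\varepsilon,j,m$, the conjugation, and the exact form of $\ww,\nn$ (which depends on the comparison of $x$ with $y+1$) is where care is needed, though each individual verification is routine.
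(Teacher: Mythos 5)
Your proposal is correct, but it reaches the result by a genuinely different route than the paper. The paper's proof never exploits the fact that $F4$ and $\widetilde{F}4$ are disconnected: it computes $\ww$ and $\nn$, exhibits explicit witnesses in the Schur interval (namely $(w_1,w_2,2,1^{a-2})$ when $a,x\geq 2$, $(x,y+2)$ when $a=1$ and $x>y+1$, and $(x+2,2^{a-2},1^{b+2})$ for $\widetilde{F}4$), argues their non-membership by placing strings in the diagram, and proves the positive cases $A4$ by string placements as well. You instead observe the decompositions $s_{F4}=s_{(1^x)}s_{(a+1,1^y)}$ and $s_{\widetilde{F}4}=s_{(1^x)}s_{(a+b,a)}$, which makes $(ii)$ an immediate consequence of Lemma~\ref{disconnected} (the component $(a+b,a)$ has a $2\times 2$ block since $a\geq 2$), and turns $(i)$ into an application of the dual Pieri rule: the support is exactly the set of conjugates $(y+m+1,\,j+1,\,1^{a-1+\varepsilon})$ with $\varepsilon+j+m=x$, $0\le j\le y$, all with coefficient $1$. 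With this closed form the remaining checks are purely arithmetic, and they do go through: your witnesses $(x+y-1,2,2,1^{a-2})$ (three parts $\geq 2$) and $(x,y+2)$ (second part exceeding $y+1$) lie in $[\ww,\nn]$ and visibly miss the support; in the $A4$ cases $\ww\preceq\xi\preceq\nn$ forces $\ell(\xi)\leq 3$ and $\xi_3\leq 1$, and the assignment $\varepsilon=\xi_3$, $m=\xi_1-y-1$, $j=\xi_2-1$ (with $\xi_2\leq y+1$ forced by $\xi_2\leq\xi_1$ together with $x\leq y+1$) lands in the Pieri list, the three-element interval for $a\geq 2$, $x=1$ being checked directly as in the paper. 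What the paper's approach buys is uniformity: it stays inside the string/Algorithm~\ref{Proc:alg} framework that is iterated for all the other configurations in Section~4. What your approach buys is economy: the multiplicity-one claim and the exact three-term expansion for $a\geq 2$, $x=1$ come for free from Pieri, and no reasoning about LR fillings of the skew shape is needed anywhere in the argument.
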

\begin{proof}
We start with an $F4$ configuration. If $a\geq 2$ then the minimum and maximum  of $\sup(\lambda/\mu)$ are
$$\ww=(w_1,w_2,1^a)\preceq\nn=(x+y+1,1^a),$$
with $w_1=\max\{x,y+1\}$, $w_2=\min\{x,y+1\}$ and $\ell(\ww)=\ell(\nn)+1$. When $x\geq 2$ the partition $\xi:=(w_1,w_2,2,1^{a-2})$ satisfy $\ell(\xi)=\ell(\nn)$, and thus $\ww\preceq\xi\preceq\nn$, but is not in  the support of $\lambda/\mu$, since the strings of length $w_1$ and $w_2$ must fill the first and last columns, leaving no space for the string of length 2. On the other hand, if $x=1$, then the Schur interval of $\lambda/\mu$ is $$[\ww,\nn]=\{\ww,(y+1,2,1^{a-1}),\nn\}=\sup(\lambda/\mu).$$

Assume now that $a=1$. If $x>y+1$ then $\ww=(x,y+1,1)\preceq\nn=(x+y+1,1),$
and $\xi:=(x,y+2)\in[\ww,\nn]$ but clearly $\xi\notin\sup(\lambda/\mu)$. If otherwise, we have $x\leq y+1$, then
the minimum and the maximum  of $\lambda/\mu$ are given by
$$\ww=(y+1,x,1)\preceq\nn=(x+y+1,1).$$
A partition $\xi=(\xi_1,\xi_2,\xi_3)\in[\ww,\nn]$ must satisfy $y+1\leq \xi_1\leq x+y+1$ and $x+y+1\leq \xi_1+\xi_2\leq x+y+2$ with $\xi_3\in\{0,1\}$. Let $\xi_1=y+1+k$, for some $k\in\{0,\ldots,y+1\}$. Then, we get $x-k\leq\xi_2\leq x+1-k$, and since $x\leq y+1$ it follows that the partition $\xi$ belongs to  the support of $\lambda/\mu$.

\medskip

Finally, suppose next that $\lambda/\mu$ is an $\widetilde{F}4$ configuration. Then the minimum and maximum of $\sup(\lambda/\mu)$ are
$$\ww=(x,2^a,1^b)\preceq\nn=(x+2,2^{a-1},1^b),$$
and, since $a\ge 2$, $\xi:=(x+2,2^{a-2},1^{b+2})$ is a partition and satisfy $\ww\preceq\xi\preceq\nn$, but $\xi\notin\sup(\lambda/\mu)$.
\end{proof}

The next skew diagrams $\lambda/\mu$ are respectively  $F5$, $\widetilde{F}5$ and $\widehat{F}5$, defined by the partitions: $\mu=((a+b)^{x+y})$ and $\lambda^*=(b+2,1^{y+1})$, with $a,x\geq 2$ and  $b,y\geq 0$;  $\mu=((a+1)^{x+y})$ and $\lambda^*=((a+2)^z,1^{y+1})$, with $a\geq 2, y\geq 0$ and  $x,z\geq 1$; and
$\mu=((a+b+c)^x,a)$ and $\lambda^*=(c+1)$, with $x\geq 2$ and either  $a,b,c\geq 1$ or $b=0$ and $a,c\geq 1$ with $a+c\geq 3$, as illustrated by:

\begin{equation}\label{pc2}
\parbox{5cm}{$F5$\quad\begin{tikzpicture}
\fill[color=blue!20] (0,.5) rectangle (2,2);
\draw (0,0) rectangle (1,.5)  (0,.25) rectangle (2.25,.5) (2,.25) rectangle (2.25,2) (2,1) rectangle (2.5,2);
\draw (0,0) -- (0,2) -- (2.5,2);
\node at (2.4,.75) {$y$};\node at (2.65,1.5) {$x$};\node at (.5,.65) {$a$};\node at (1.5,.7) {$b$};
\end{tikzpicture}}
\parbox{4cm}{$\widetilde{F}5$\quad\begin{tikzpicture}
\fill[color=blue!20] (1,.5) rectangle (2,2);
\draw (1,-.5) rectangle (1.25,.5)  (1,.25) rectangle (2.25,.5) (2,.25) rectangle (2.25,2) (2,1) rectangle (2.5,2);
\draw (1,0) -- (1,2) -- (2.5,2);
\node at (1.4,-.15) {$z$};\node at (2.4,.75) {$y$};\node at (2.65,1.5) {$x$};\node at (1.65,.65) {$a$};
\end{tikzpicture}}
\parbox{4cm}{$\widehat{F}5$\quad\begin{tikzpicture}
\fill[color=blue!20] (0,.25) rectangle (.6,1.25);\fill[color=blue!20] (.6,.5) rectangle (1.75,1.25);
\draw (0,0) rectangle (1,.25)  (.6,0) rectangle (1,.5) (1,.25) rectangle (2,.5) (1.75,.25) rectangle (2,1.25);
\draw (0,0) -- (0,1.25) -- (2,1.25);
\node at (2.15,.9) {$x$};\node at (.35,.4) {$a$};\node at (.8,.7) {$b$};\node at (1.4,.65) {$c$};
\end{tikzpicture}.}
\end{equation}

\begin{lemma}\label{f5}
The support of  $F5, \,\widetilde{F}5$ or  $\widehat{F}5$  is strictly contained in the Schur interval.
\end{lemma}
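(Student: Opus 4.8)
The plan is to dispose of the three shapes one at a time, in each case producing an explicit partition $\xi$ with $\ww\preceq\xi\preceq\nn$ but $\xi\notin\sup(\lambda/\mu)$. First I would replace each configuration by a basic representative (deleting empty rows and columns) and read off $\ww=cols(\lambda/\mu)$ and $\nn=rows(\lambda/\mu)'$ from the minimal and maximal LR fillings, equivalently from the $V$--sequence. A short computation shows that in the generic parameter range $\ell(\ww)=\ell(\nn)+1$, so the shape is not ``of maximal width'', and it carries a $2\times2$ block separated from its unique longest row by the vertical strips $V_i$ --- precisely the situation handled by the tools of Subsection 4.1.

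The main engine is Lemma \ref{induction}. I would delete $V_1$ and observe that for each of $F5$, $\widetilde F5$ and $\widehat F5$ the diagram $\lambda/\mu\setminus V_1$ is disconnected: a vertical strip together with a two--row diagram (an L--shape) or a small near--rectangle. Hence $\sup(\lambda/\mu\setminus V_1)$ is a short list obtained from Pieri's rule, one chooses $\xi^1\in[\ww^1,\nn^1]\setminus\sup(\lambda/\mu\setminus V_1)$, and sets $\xi=(n_1)\cup\xi^1$; as the string $n_1\cdots21$ can only fill $V_1$, Lemma \ref{induction} yields $\xi\notin\sup(\lambda/\mu)$. For instance, for $\widetilde F5$ one has $\lambda/\mu\setminus V_1=(1^x)\oplus(a+1)$, whose support is $\{(x,1^{a+1}),(x+1,1^a)\}$, so for $a\ge2$ one may take $\xi^1=(x,2,1^{a-1})$ and $\xi=(x+y+z+1,x,2,1^{a-1})$. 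Several sub--ranges can be short--circuited instead: when the L--shaped component itself contains a $2\times2$ block, Lemma \ref{disconnected} applied to $\lambda/\mu\setminus V_1$ already concludes; when $\ell(\ww)>\ell(\nn)$ and $V_s$ is a column of length $\ge2$, Corollary \ref{same} applies; and when $\sigma^1$ exhibits two entries in positions $2\le i<j\le\ell$ with $\overline w_i\ge\overline w_j+2$ and $w_j>\overline w_j$, Lemma \ref{las} (equivalently Corollary \ref{TF3}) applies.

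For $\widehat F5$, in the parameter regime where it is a genuine ribbon made of columns I would instead invoke Theorem \ref{ribb} directly, exhibiting a partition $[s]=S\cup B\cup\{k\}$ --- with $S$ collecting the columns that produce the length--$2$ rows and $\{k\}$ roughly the column of length $c+1$ --- and verifying the three displayed inequalities of that theorem from $x\ge2$ and the non--degeneracy hypotheses ($a,b,c\ge1$, or $b=0$ with $a+c\ge3$). The remaining regimes of $\widehat F5$ are either disconnected with a component carrying a $2\times2$ block, or have the column pattern of an $F0$, and thus fall under Subsection 4.1.

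The hard part will be the width of the case split rather than any individual estimate: $F5$, $\widetilde F5$ and $\widehat F5$ specialise differently when $b=0$, $y=0$, $a=2$, or $x$ (or $z$) is small, and in a handful of these the partition $\xi$ chosen above happens to lie in the support (e.g.\ $\widetilde F5$ with $x=1$, or $F5$ with $a=2,b=0$), forcing either a second choice of $\xi$, a further descent to $\lambda/\mu\setminus(V_1\cup V_2)$, or a passage to a rotation or conjugate of the shape. Making the list of sub--cases exhaustive and checking $\ww\preceq\xi\preceq\nn$ together with $\xi\notin\sup$ in each is routine but must be carried out with care.
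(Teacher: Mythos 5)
Your main engine---remove $V_1$ and apply Lemma \ref{induction}---does reproduce the paper's argument for $\widetilde F5$ (your $\xi=(x+y+z+1,x,2,1^{a-1})$ is exactly the paper's choice, and the justification is the same), and it gives a clean alternative for most of $F5$: there $F5\setminus V_1=(1^x)\oplus(a+b,a-1)$, so Lemma \ref{disconnected} plus Lemma \ref{induction} finishes whenever $a\ge 3$, and a Pieri computation finishes most of $a=2$. But the approach has two genuine gaps. First, for $\widehat F5$ the engine fails in principle: the whole last column (length $x+1$) lies in $V_1$, so $\widehat F5\setminus V_1$ is just a two--row skew diagram, and by Proposition \ref{2col} every two--row skew diagram has full interval support; hence no $\xi^1$ exists at that level. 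Your fallbacks for $\widehat F5$ also do not apply in the stated parameter range: Theorem \ref{ribb} requires all interior columns to have length at least $2$, which fails because $c\ge 1$ forces interior unit columns (and the conjugate has interior unit columns since $x\ge 2$); when $b=0$ the shape is disconnected but neither component contains a $2\times 2$ block, so Lemma \ref{disconnected} is silent; the $F0$ pattern needs the columns to the right to end at least two rows above the repeated columns, whereas here they end exactly one row above; and Lemma \ref{las} cannot fire since $\ww^1$ has only parts $1$ and $2$. The paper handles $\widehat F5$ by exhibiting $\xi=(x+1,3,2^{b-1},1^{a+c-1})$ (resp.\ $(x+1,3,1^{a+c-3})$ for $b=0$) and arguing directly that a $3$--string cannot be placed; some such hands--on argument is unavoidable and is absent from your plan.

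Second, the $F5$ subfamily you flag ($a=2$, $b=0$, $x=2$, any $y$) cannot be repaired by the remedies you list: there $F5\setminus V_1=(1^2)\oplus(2,1)$ has full interval support, so no second choice of $\xi^1$ exists, and ``further descent'' is useless because Lemma \ref{induction} only transports \emph{non}--fullness upward --- once $A\setminus V_1$ is full, nothing about deeper truncations says anything about $A$. Here too one must argue directly on $F5$ itself, as the paper does with $\xi=(x+y+1,x+1,3,2^{a-2},1^{b})$. (Your worry about $\widetilde F5$ with $x=1$ is fair but not held against you: the paper's own $\xi$ is not a partition there either, so both arguments implicitly assume $x\ge 2$ in that configuration.) In summary: same idea as the paper for $\widetilde F5$, a viable variant for most of $F5$, but the $\widehat F5$ case and the exceptional $F5$ subcase are real gaps that your quoted lemmas cannot close.
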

\begin{proof}
The minimum and maximum  of the support of $F5$ are respectively
$$\ww=(x+y+1,x,2^a,1^b)\preceq\nn=(x+y+2,x+2,2^{a-2},1^{b+1}),$$
with $\ell(\ww)=\ell(\nn)+1$. Since $a,\,x>1$, we may consider the partition $$\xi:=(x+y+1,x+1,3,2^{a-2},1^b)$$ which is an element of the Schur interval but it is not in  the support of $F5$.

In the case of $\widetilde{F}5$, we have
$$\ww=(w_1,w_2,w_3,1^a)\preceq\nn=(x+y+z+1,x+1,1^a),$$
\noindent with $w_1,w_2,w_3$ the lengths of the first and the two last columns of $\widetilde{F}5$ by decreasing order. Take $$\xi:=(x+y+z+1,x,2,1^{a-1})\in[\ww,\nn].$$
Since after placing the strings of length $x+y+z+1$ and $x$, in the unique possible positions in $\widetilde{F}5$, we are left with only a single row, it follows that $\xi\notin\sup(\widetilde{F}5)$.

Finally,  we are in the case of $\widehat{F}5$. If $b\geq 1$, the minimum and the maximum of the support are given by
$$\ww=(x+1,2^b,1^{a+c})\preceq\nn=(x+2,2^b,2^{\min\{a-1,c\}},1^{a-1+c-2\min\{a-1,c\}}).$$
It is easy to check that the partition $\xi:=(x+1,3,2^{b-1}1^{a+c-1})$ is an element of the Schur interval but is not in  the support of $\widehat{F}5$. Similarly, when $b=0$ the partition $\xi=(x+1,3,1^{a+c-3})$ belongs to the Schur interval but not to the support of $\widehat{F}5$.
\end{proof}

The next family of skew diagrams $\lambda/\mu$ is designated by $F6$ and it is defined by partitions $\lambda=(a+b+1,(a+1)^x,1^y)$ and $\mu=(1)$, for some
integers $a,x>0$ and $b,y\geq 1$,
\medskip

\begin{equation}\label{f6}
F6\quad\begin{tikzpicture}
\fill[color=blue!20] (0,1.2) rectangle (.25,1.45);
\draw (0,1.2) -- (0,1.45) -- (.25,1.45);
\draw (0,0) rectangle (.25,1.2) (.25,1.2) rectangle (1.4,1.45) (0,.8) rectangle (1,1.45);
\node at (-0.15,0.4) {$y$};\node at (-0.15,1) {$x$};\node at (.6,1.6) {$a$};\node at (1.15,1.65) {$b$};
\end{tikzpicture}.
\end{equation}

{\em  When $b=y=1$ an $F6$ configuration  is called an $A6$ configuration.}

\begin{proposition}
The support of a skew diagram $\lambda/\mu$ with an $F6$ configuration equals the Schur interval if and only if it is an $A6$ configuration.
Moreover, in this case, the support is $[\ww,\nn]=\{\ww=((x+1)^{a+1},1),\xi=(x+2,(x+1)^{a-1},x,1),\nn=(x+2,(x+1)^a):a,\,x\ge 1\}$, and the skew Schur function $s_{\lambda/\mu}=s_{\ww'}+s_{\xi'}+s_{\nn'}$ has three components all with multiplicity $1$.
\end{proposition}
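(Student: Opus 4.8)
The plan is to exploit that every $F6$ configuration has $\mu=(1)$, which forces $s_{\lambda/\mu}$ to be multiplicity--free and makes its support trivial to list. By \eqref{shurP1} and the Littlewood--Richardson rule, $c_{(1)\nu}^{\lambda}=c_{\nu(1)}^{\lambda}$ is the number of LR tableaux of shape $\lambda/\nu$ with content $(1)$, hence equals $1$ if $\lambda/\nu$ is a single box and $0$ otherwise; so $s_{\lambda/\mu}=\sum_{\nu}s_{\nu}$ where $\nu$ runs over the diagrams obtained from $\lambda=(a+b+1,(a+1)^{x},1^{y})$ by deleting a removable corner. Since $a,b,x,y\ge 1$, the partition $\lambda$ has exactly three distinct part sizes, hence exactly three (pairwise distinct) corners, at the ends of rows $1$, $x+1$, $x+1+y$; I would then compute the conjugates of the three resulting partitions, obtaining respectively $\nn$, $\widetilde\xi$ and $\ww$, where
$$\ww=cols(\lambda/\mu)=(x+y,(x+1)^{a},1^{b}),\qquad \nn=rows(\lambda/\mu)'=(x+y+1,(x+1)^{a},1^{b-1}),$$
and $\widetilde\xi=(x+y+1,(x+1)^{a-1},x,1^{b})$. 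Thus for \emph{every} $F6$ configuration one has $\sup(\lambda/\mu)=\{\ww,\widetilde\xi,\nn\}$ with all three Littlewood--Richardson coefficients equal to $1$; and since this three--element set always lies inside $[\ww,\nn]$, the statement reduces to deciding when $|[\ww,\nn]|=3$.

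Next I would settle the ``if'' direction, i.e. the case $b=y=1$, where $\ww=((x+1)^{a+1},1)$, $\nn=(x+2,(x+1)^{a})$, and $\widetilde\xi$ becomes the $\xi=(x+2,(x+1)^{a-1},x,1)$ of the statement. A partial--sum computation does it: for $\ww\preceq\nu\preceq\nn$ the $i$--th partial sum of $\nu$ must equal $i(x+1)$ or $i(x+1)+1$ for $1\le i\le a+1$, and the requirement that $\nu$ be a partition forces the vector of ``$+1$'' offsets $(\epsilon_1,\dots,\epsilon_{a+1})$ to be $(0,\dots,0)$, $(1,\dots,1,0)$ or $(1,\dots,1)$, which gives $\nu=\ww,\xi,\nn$ respectively. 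Hence $[\ww,\nn]=\{\ww,\xi,\nn\}=\sup(\lambda/\mu)$ and $s_{\lambda/\mu}=s_{\ww'}+s_{\xi'}+s_{\nn'}$ has three components, each of multiplicity $1$.

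For the converse I would use conjugation. A direct check on diagrams shows that if $\lambda/\mu$ has configuration $F6$ with parameters $a,b,x,y$ then $(\lambda/\mu)'$ has configuration $F6$ with parameters $x,y,a,b$; since $\sup(A)=[\ww(A),\nn(A)]$ is equivalent to the same statement for $A'$, it is enough to rule out $y\ge 2$ (which then also covers $b\ge 2$). Assuming $y\ge 2$, I would exhibit $\eta:=(x+y,x+2,(x+1)^{a-1},1^{b-1})$. It is a genuine partition; it satisfies $\ww\preceq\eta$ because it is obtained from $\ww$ by raising one box from the last (length~$1$) row to the second row, which is legitimate exactly because $y\ge 2$ makes $x+y\ge x+2$; and $\eta\preceq\nn$ because the partial sums of $\eta$ and $\nn$ agree from the second index on. Since $\eta_1=x+y$ and $\eta_2=x+2$, the partition $\eta$ is none of $\ww,\widetilde\xi,\nn$, so $|[\ww,\nn]|\ge 4>3$ and $\sup(\lambda/\mu)\subsetneqq[\ww,\nn]$.

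The partial--sum comparisons and the conjugate computations are routine. The step I expect to need the most care is the first one: verifying that the ``delete a corner'' list of $\sup(\lambda/\mu)$ is complete and has exactly three distinct elements for all admissible $a,b,x,y\ge 1$, since that is what pins $|\sup(\lambda/\mu)|=3$ and turns everything else into a comparison of cardinalities. (An alternative, slightly longer, route is to run Algorithm~\ref{Proc:alg} from the minimum filling of $\lambda/\mu$: the single admissible string prolongation produces the tableau of content $\widetilde\xi'$, and no further prolongation is possible, reproving $\sup(\lambda/\mu)=\{\ww,\widetilde\xi,\nn\}$ with all coefficients $1$.)
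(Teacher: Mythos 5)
Your proposal is correct, and it is worth noting where it diverges from the paper. The paper's own proof has the same skeleton — reduce by conjugation to one of the two symmetric cases, exhibit a partition of the Schur interval outside the support when the configuration is not $A6$, and enumerate the three-element interval when $b=y=1$ — but it leaves the decisive facts unjustified ("$\xi$ clearly does not belong to the support", "$\xi\in\sup(\lambda/\mu)$"). Your opening step supplies exactly the missing justification, and in a stronger form: since every $F6$ shape has $\mu=(1)$, the symmetry $c_{(1)\nu}^{\lambda}=c_{\nu(1)}^{\lambda}$ and the Littlewood--Richardson (Pieri) rule show that $\sup(\lambda/\mu)$ consists precisely of the conjugates of the three corner-deletions of $\lambda$, all with coefficient $1$, for \emph{every} $F6$ configuration. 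This simultaneously proves multiplicity-freeness, yields the "Moreover" clause, and reduces the whole proposition to the cardinality of $[\ww,\nn]$, which your partial-sum argument (offsets $\epsilon_i\in\{0,1\}$ forced to be $(0^{a+1})$, $(1^a,0)$ or $(1^{a+1})$) settles cleanly in the $A6$ case. The only cosmetic difference in the negative direction is that you normalize to $y\ge 2$ while the paper normalizes to $b\ge 2$; your witness $\eta=(x+y,x+2,(x+1)^{a-1},1^{b-1})$ is exactly the conjugate of the paper's $\xi=(x+y,(x+1)^a,2,1^{b-2})$ under the parameter swap $(a,b,x,y)\mapsto(x,y,a,b)$, and your verifications that $\eta$ is a partition of the right size lying in $[\ww,\nn]$ but distinct from the three support elements are accurate. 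In short, both arguments are valid; yours buys a complete, self-contained determination of the support (and of the Schur expansion) for all $F6$ shapes at once, at the modest cost of the explicit conjugate computations.
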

\begin{proof}
We consider the case $b\geq 2$. Notice that the case $y\geq 2$ is the conjugate of the former. In our case,
 the minimum and the maximum of the support are  respectively $$\ww=(x+y,(x+1)^a,1^b)\preceq\nn=(x+y+1,(x+1)^a,1^{b-1}),$$
\noindent and we can consider the partition $\xi:=(x+y,(x+1)^a,2,1^{b-2})\in[\ww,\nn]$, which clearly does not belong to the support of $\lambda/\mu$.

For the remaining  case $y=b=1$, notice that the Schur interval is given by
$$[\ww,\nn]=\{\ww=((x+1)^{a+1},1),\xi=(x+2,(x+1)^{a-1},x,1),\nn=(x+2,(x+1)^a)\}.$$
Since $\xi\in\sup(\lambda/\mu)$ the result follows.
\end{proof}



In the next lemma we analyse the support of a skew diagram $\lambda/\mu$, with $\ell+1\geq 4$ columns, having the form
\begin{equation}\label{cf8}
F7\quad\begin{tikzpicture}
\fill[color=blue!20] (0,.4) rectangle (.25,2.1);
\fill[color=blue!20] (.25,1) rectangle (.5,2.1);
\fill[color=blue!20] (.5,1.7) rectangle (.75,2.1);
\draw (0,0) -- (0,.4) -- (.25,.4) -- (.25,1) -- (.5,1) -- (.5,1.7) --
(.75,1.7) -- (.75,0) -- (0,0);
\draw (.75,.5) rectangle (1,2.1);
\draw (0,0) -- (0,2.1) -- (1,2.1);
\node at (0.9,0.2) {$y$};\node at (1.15,1.2) {$x$};
\end{tikzpicture},
\end{equation}
where the first $\ell$ columns end in the same row and have pairwise
distinct lengths, $x$ is the length of the last column and
$\lambda^*=(1^y)$. Moreover by Lemma \ref{bloco}, we assume without loss of
generality that the last but one column starts at least one row below the
topmost box of the last column, and that the first column has length $\leq
y$. Denote by $(w_1,\ldots,w_{\ell})$ the partition formed by the first
$\ell$ columns of the diagram, and let $k\geq 0$ be the number of rows
that the last two columns share.
\medskip

{\em A skew diagram $F7$ such that $\ell=3,\,
x=w_2,\, w_3=1$ and $k=w_2-1$ is called an $A7$ configuration.}
\vskip 0.3cm
 We
distinguish two cases: either $(i)$ $w_2\leq y$, or $(ii)$ $w_2>y$.
In the
first case, the last column shares rows only with column $\ell$, and, in
the second case, the last column shares rows with at least columns $\ell$
and $\ell-1$. Examples of $A7$ configurations of types $(i)$ and $(ii)$
are shown below:

 $$(i)\quad\tiny\young(:::\hfill,::\hfill\hfill,::\hfill,:\hfill\hfill,\hfill\hfill\hfill),\qquad\qquad
(ii)\quad
\tiny\young(:::\hfill,::\hfill\hfill,:\hfill\hfill\hfill,:\hfill\hfill,\hfill\hfill\hfill).$$

In the next two lemmas we show that the support of an $A7$ configuration
is the entire Schur interval.

\begin{lemma}\label{f81}
Let $\lambda/\mu$ be an $F7$ configuration \eqref{cf8} such that $w_2\leq
y$. Then, the support of $\lambda/\mu$ equals the Schur interval if and
only if $\ell=3,\, w_3=1,\, x=w_2$ and $k=w_2-1$.
\end{lemma}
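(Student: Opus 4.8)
The plan is to prove the two implications separately, after normalising $\lambda/\mu$ as in \eqref{cf8} and recording the extreme partitions. Write $C_1,\dots,C_\ell$ for the first $\ell$ columns, with lengths $w_\ell<\dots<w_1$ and common bottom row $R$, and $C_{\ell+1}$ for the last column of length $x$; let $k\ge 1$ be the number of rows shared by $C_\ell$ and $C_{\ell+1}$, so $k\le x-1$ by the normalisation. Since $w_2\le y$ the last column meets only $C_\ell$, so $V_1$ is $C_{\ell+1}$ stacked above the lower $w_1-k$ boxes of $C_\ell$, $n_1=w_1+x-k$ equals the number of rows, and $\lambda/\mu\setminus V_1$ consists of $C_1,\dots,C_{\ell-1}$ unchanged together with the top $k$ boxes of $C_\ell$; thus $\ww^1=\varsigma(w_2,\dots,w_\ell,k)$ and $\sigma^1=(n_1)\cup\ww^1$. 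First I would record $\ww=\varsigma(w_1,\dots,w_\ell,x)$ and, by reading off the row lengths, $\nn=(rows(\lambda/\mu))'$; in the configuration that will have full support these are $\ww=(w_1,w_2,w_2,1)$ and $\nn=(w_1+1,2w_2-1,1)$.

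\medskip\noindent\textbf{The ``only if'' direction} I would prove contrapositively: assuming $\lambda/\mu$ is an $F7$ with $w_2\le y$ which is \emph{not} of the stated form, I would exhibit $\xi\in[\ww,\nn]\setminus\sup(\lambda/\mu)$. If $\ell\ge4$, or $\ell=3$ with $w_3\ge2$, then $\ell(\ww)=\ell+1>\ell=\ell(\nn)$ in the generic range of $x$: when $w_\ell\ge2$ the last strip $V_s$ ($s=\ell(\nn)$) is the column $C_1$, of length $w_\ell\ge2$, and Corollary \ref{same} applies; when $w_\ell=1$ (which within this case forces $\ell\ge4$) the diagram $\lambda/\mu\setminus V_1$ has the $2\times2$ block formed by $C_{\ell-2},C_{\ell-1}$ in one component and the detached top of $C_\ell$ in another, so Lemma \ref{disconnected} together with Lemma \ref{induction} finishes (the case where $x$ is so large that the top of $C_\ell$ does not detach, or a row reaches length $\ell+1$, is absorbed into the direct argument below). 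If $\ell=3$ and $w_3=1$ but $x\ne w_2$, or $k\le w_2-2$, I would compare $\sigma^1=(n_1)\cup\varsigma(w_2,1,k)$ with $\ww=\varsigma(w_1,w_2,w_2,1)$: the mismatch between $x$ and $w_2$, or the smallness of $k$, forces two entries of $\sigma^1$ to differ by at least $2$ while the corresponding entries of $\ww$ are strictly larger, which is exactly the hypothesis of Lemma \ref{las} (equivalently one recognises an $F0$-type subconfiguration and cites Corollary \ref{TF3}); and in the remaining subcase ($x>w_2$ with $C_{\ell+1}$ long) placing the longest string forces $C_{\ell+1}$ to be entirely filled, leaving too little vertical room for the next string, and a direct Algorithm \ref{Proc:alg} argument as in the proofs of Propositions \ref{prop3} and \ref{f4} produces $\xi$.

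\medskip\noindent\textbf{The ``if'' direction.} Here $\ell=3$, $w_3=1$, $x=w_2$, $k=w_2-1$, so $\lambda/\mu$ is an $A7$ of type $(i)$, and $w_2\le y$ forces $w_1\ge2w_2-1$; moreover $\ww=(w_1,w_2,w_2,1)$ and $\nn=(w_1+1,2w_2-1,1)$. Given $\xi\in[\ww,\nn]$, the relations $\ww\preceq\xi\preceq\nn$ pin down $\xi_1\in\{w_1,w_1+1\}$, $\ell(\xi)\le4$ and $\xi_4\le1$. I would run Algorithm \ref{Proc:alg} from the minimum LR filling, placing the first string of length $\xi_1$ as $V_1$ when $\xi_1=w_1+1$ (the only possibility, since the only $n_1$-string fills $V_1$) and straight down the column $C_3$ when $\xi_1=w_1$. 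In both cases the residual skew diagram, after deleting empty rows and columns, is the disjoint union of the ribbon hook $(2,1^{w_2-1})$ coming from $C_1\cup C_2$ with a single vertical strip $(1^m)$, $m\in\{w_2-1,w_2\}$; it is precisely the hypothesis $w_2\le y$ (via $w_1\ge2w_2-1$) that lets the leftover top piece of $C_3$ detach as that strip. Its skew Schur function is $s_{(2,1^{w_2-1})}\,e_m$, whose support is the whole of its Schur interval: by Pieri's rule its Schur expansion runs over the partitions obtained from $(2,1^{w_2-1})$ by adding a vertical $m$-strip, and one checks these conjugates exhaust the interval — this is the hook-times-one-column instance appearing in Proposition \ref{rib}, treated in Section 5. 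Since $(\xi_2,\xi_3,\dots)$ lies in that interval it is realised by an LR filling of the residual, which extends the first string to an LR filling of $\lambda/\mu$ of conjugate content $\xi$; hence $\xi\in\sup(\lambda/\mu)$, and together with membership of the two extreme partitions this gives $\sup(\lambda/\mu)=[\ww,\nn]$.

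\medskip\noindent\textbf{Main obstacle.} The hard part will be the ``if'' direction: one must verify that for \emph{both} admissible values of $\xi_1$ the first string can legitimately be placed and that its removal always produces the same ribbon-hook-plus-strip residual (so that $w_2\le y$, not merely the $A7$ shape, is the decisive hypothesis), and one must establish the full-support statement for $s_{(2,1^{w_2-1})}e_m$ together with the fact that every LR filling of the residual glues back to a genuine LR filling of $\lambda/\mu$. The ``only if'' direction, by contrast, is mostly bookkeeping: matching each way in which an $F7$ with $w_2\le y$ can fail to be an $A7$ against a bad configuration already disposed of in Subsection 4.1.
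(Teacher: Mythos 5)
Your plan follows the paper's own route almost step for step: the same extreme partitions $\ww=\varsigma(w_1,\dots,w_\ell,x)$ and $\nn=(w_1+x-k,w_2+k,w_3,\dots,w_\ell)$, the same elimination of non--$A7$ shapes via Corollary \ref{same}, Lemma \ref{las} and disconnectedness, and for the ``if'' part the same split $\xi_1\in\{w_1,w_1+1\}$ followed by removal of the $w_1$--column (resp.\ of $V_1$) and the observation that the residual three--column diagram has full interval support. The only genuinely different ingredient is how you certify that last fact: the paper recognises the residual as an $A2^{\pi'}$ configuration and quotes Proposition \ref{prop3}, while you view it as the product $s_{(2,1^{w_2-1})}e_m$ and argue by the Pieri rule; both work, and your identification of the residual (hook plus detached column of length $m\in\{w_2-1,w_2\}$, using $w_1\ge 2w_2-1$ from $w_2\le y$) is correct.

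Three points need repair. First, $k\ge 1$ does \emph{not} follow from the normalisation (which only gives $k\le x-1$); the case $k=0$ must be treated, and it matters, because Lemma \ref{las} requires a connected diagram and your ``$A\setminus V_1$ is disconnected'' trick for $\ell\ge 4$, $w_\ell=1$ also presupposes $k\ge1$. The fix is the paper's: for $k=0$ the diagram itself is disconnected with a $2\times2$ block, so Lemma \ref{disconnected} applies. Second, for $x>w_2$ you only gesture at ``a direct Algorithm \ref{Proc:alg} argument produces $\xi$''; Lemma \ref{las} genuinely fails there, and one must exhibit a witness — the paper takes $\xi=(w_1,w_2+1)\cup(x)$, which lies in $[\ww,\nn]$ but cannot be filled because the string of length $w_1$ forces the last column and the long column to be consumed incompatibly. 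Third, justifying the residual's full support by ``the hook--times--one--column instance \dots treated in Section 5'' is circular in the paper's architecture, since the Section 5 corollaries are deduced from Theorem \ref{main}, whose proof uses this lemma via Corollary \ref{f8}; either complete the Pieri computation you sketch (checking that the conjugates of the partitions obtained by adding a vertical $m$--strip to $(2,1^{w_2-1})$ exhaust $[\varsigma(w_2,1,m),\,\ww^{\,1}\text{-type top}]$), or cite Proposition \ref{prop3} (the residual is an $A2^{\pi'}$) or Proposition \ref{f4} (it is an $F4$ with $a=1$, $x=m\le y+1$), both of which precede this lemma. With these repairs, and a word on why the glued filling is again an LR tableau (the paper is equally terse here), your argument is sound.
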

\begin{proof}
The condition $w_2\leq y$ means that the last column of $\lambda/\mu$
shares rows with at most the last but one column.  Then,
the minimum and maximum of the support of $\lambda/\mu$  are given respectively by
$$\ww=(w_1,\ldots,w_{\ell})\cup\{x\}\preceq\nn=(w_1+x-k,w_2+k,w_3,\ldots,w_{\ell}),$$
with $\ell(\ww)=\ell(\nn)+1$.

Note that when $w_{\ell}\geq 2$, it follows, from Corollary \ref{same}, that
$\sup(\lambda/\mu)\varsubsetneq[\ww,\nn]$, and when $\ell\geq 4$ and
$w_{\ell}=1$, the partition
$\xi:=(w_1+x-k,w_2+k,w_3,\ldots,w_{\ell-1}-1,1,1)$ shows that
$\sup(\lambda/\mu)\varsubsetneq[\ww,\nn]$. Moreover, if $k=0$ the skew
diagram is disconnected, and, by Lemma \ref{disconnected}, the support of
$\lambda/\mu$ is not the entire Schur interval.

So, assuming $\ell=3,\, k>0$ and $w_3=1$, we have
$$\ww=(w_1,w_2,1)\cup\{x\}\preceq\nn=(w_1+x-k,w_2+k,1).$$

If $x<w_2$ then $1\leq k<x<w_2$, and, in particular, we get $w_2\geq k+2$.
Since $w_2$ and $k$ are the lengths of the second and third columns of
 $\lambda^1/\mu$ (see page \pageref{deflambda1} for the definition of $\lambda^1/\mu$), it follows, from Lemma \ref{las}, that
$\sup(\lambda/\mu)\varsubsetneq[\ww,\nn]$. The same situation happens if
$x>w_2$, since in this case the partition $\xi:=(w_1,w_2+1)\cup(x)$
satisfies $\ww\preceq\xi\preceq\nn$, but it does not belong to the support
of $\lambda/\mu$.

We are, therefore, left with the case $\ell=3, k>0, x=w_2$ and $w_3=1$.
Note that $1\leq k\leq w_2-1$. When $k<w_2-1$, the second and third
columns of $\lambda^1/\mu$ have lengths $w_2$ and $k$, respectively, and by
Lemma \ref{las}, we find that the support of $\lambda/\mu$ is not the
entire Schur interval.
So we must also consider $k=w_2-1$. In this case, the minimum and the maximum
 of the support are given by
$$\ww=(w_1,w_2,w_2,1)\preceq\nn=(w_1+1,w_2+w_2-1,1).$$ Let
$\xi:=(\xi_1,\xi_2,\xi_3,\xi_4)\in[\ww,\nn]$, and note that
$w_1\leq\xi_1\leq w_1+1$.

If $\xi_1=w_1$, then from the inequalities $\ww\preceq\xi\preceq\nn$ it
follows that $\alpha\preceq(\xi_2,\xi_3,\xi_4)\preceq\beta$, where
$\alpha$ and $\beta$ are the minimum and the maximum  of the support of the skew
diagram $A$ obtained from $\lambda/\mu$ by removing the third column. Since
$A$ is an $A2^{\pi'}$ configuration, it follows that
$(\xi_2,\xi_3,\xi_4)\in\sup(A)$, and therefore $\xi\in\sup(\lambda/\mu)$.

For the remaining case $\xi_1=w_1+1$ the situation is analogous, since in
this case we have $\ww^1\preceq(\xi_2,\xi_3,\xi_4)\preceq\nn^1$,  where
$\ww^1$ and $\nn^1$ give the minimum and the maximum LR filling of
$A\setminus V_1$. Since this diagram is also an $A2^{\pi'}$ configuration,
we find that $\xi\in\sup(\lambda/\mu)$.
\end{proof}

\begin{lemma}\label{f82}
Let $\lambda/\mu$ be an $F7$ configuration \eqref{cf8} such that $w_2>y$.
Then, the support of $\lambda/\mu$ equals the Schur interval if and only
if $\ell=3,\, w_3=1, \,x=w_2$ and $k=w_2-1$.
\end{lemma}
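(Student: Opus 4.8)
The plan is to run the argument exactly as in the proof of Lemma~\ref{f81}, the only structural change being geometric: since now $w_2>y$, the rightmost column of $\lambda/\mu$ shares rows with at least the two rightmost of the first $\ell$ columns, rather than with at most the last but one. First I would record the extremal members of $\sup(\lambda/\mu)$ in this regime; one still gets $\ell(\ww)=\ell(\nn)+1$, and the placement of the long strings prescribed by Algorithm~\ref{Proc:alg} is still forced into a unique position.

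For the ``only if'' direction I would rule out every shape that is not an $A7$ configuration, in the same order as before. If $w_\ell\ge 2$, then the last vertical strip $V_s$ is a column of length $\ge 2$ and, since $\ell(\ww)>\ell(\nn)$, Corollary~\ref{same} gives $\sup(\lambda/\mu)\varsubsetneq[\ww,\nn]$. If $\ell\ge 4$ and $w_\ell=1$, the partition obtained from $\nn$ by lowering one box out of column $\ell-1$ lies in $[\ww,\nn]$ but not in the support, the forced filling of the long columns leaving no room for it. If $k=0$ the diagram is disconnected and Lemma~\ref{disconnected} applies. If $\ell=3$, $w_3=1$ but $x\ne w_2$: when $x<w_2$ one has $1\le k<x<w_2$, so $w_2\ge k+2$, and since $w_2$ and $k$ are the lengths of the second and third columns of $\lambda^1/\mu$, Lemma~\ref{las} applies; when $x>w_2$ the partition $\xi=(w_1,w_2+1)\cup(x)$ satisfies $\ww\preceq\xi\preceq\nn$ but is not in the support. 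Finally, if $\ell=3$, $w_3=1$, $x=w_2$ but $k<w_2-1$, then again the second and third columns of $\lambda^1/\mu$ have lengths $w_2$ and $k$ with $w_2\ge k+2$, so Lemma~\ref{las} applies once more.

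For the ``if'' direction, assume $\ell=3$, $w_3=1$, $x=w_2$ and $k=w_2-1$. The extremal members of the support turn out to be the same as in Lemma~\ref{f81}, namely $\ww=(w_1,w_2,w_2,1)\preceq\nn=(w_1+1,2w_2-1,1)$, so any $\xi=(\xi_1,\xi_2,\xi_3,\xi_4)\in[\ww,\nn]$ satisfies $w_1\le\xi_1\le w_1+1$. I would split on the value of $\xi_1$: in each case the $\xi_1$-string admits a unique position, and deleting its boxes leaves a skew diagram which, up to a block of maximal width or depth (Lemma~\ref{bloco}), is an $A2^{\pi'}$ configuration; by Proposition~\ref{prop3} its support is its whole Schur interval, so the shortened tail $(\xi_2,\xi_3,\xi_4)$ lies in it and hence $\xi\in\sup(\lambda/\mu)$. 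The main obstacle I expect is precisely this last reduction: because the rightmost column now overlaps two of the staircase columns, one must check carefully that the skew diagram remaining after the $\xi_1$-string is removed still obeys the inequalities $a\le c+1$ and $d\le b+1$ of Proposition~\ref{prop3}, i.e.\ is a genuine $A2^{\pi'}$ and not merely an $F2^{\pi'}$ configuration.
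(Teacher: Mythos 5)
There is a genuine gap, and it comes precisely from transplanting the $w_2\le y$ formulas into the $w_2>y$ regime without recomputing the geometry. Your first ``only if'' case ($w_\ell\ge 2$, dispatched via Corollary~\ref{same}) does not go through: when $w_2>y$ the rows of maximal length $\ell$ are no longer just the bottom $w_\ell$ rows, because the $w_2-y\ge 1$ rows in which columns $2,\dots,\ell+1$ (for $\ell=3$: the second, third and last columns) all meet also have length $\ell$. Hence the strip $V_s$ consists of boxes lying in two different columns (the bottom $w_\ell$ boxes of the first column together with boxes of the second column) and is \emph{not} a column, so the hypothesis of Corollary~\ref{same} fails. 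This is not a repairable redundancy: the subcase $\ell=3$, $w_3\ge 2$, $x=w_2$, $k=w_2-1$ is covered by none of your other cases, and the paper has to treat it by exhibiting the explicit partition $(w_1,w_2+1,w_2+1,w_3-2)$ (and, for $\ell\ge 4$ after reducing to $x=w_2$, the partition $(w_1,w_2,w_2,\dots,w_{\ell-1}+1,w_\ell-1)$, a perturbation of $\ww$ rather than of $\nn$).

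The ``if'' direction has the same defect in a more visible form: the extremal elements are \emph{not} the same as in Lemma~\ref{f81}. When $w_2>y$ one has $n_1=x+y$ and $n_2=w_1$, so in the $A7$ case ($\ell=3$, $w_3=1$, $x=w_2$, $k=w_2-1$, which forces $w_1=w_2+y-1$ and $y\ge 2$) the maximum is $\nn=(w_1+1,\,w_1,\,w_2-y+1)$, not $(w_1+1,\,2w_2-1,\,1)$; your candidate even fails $\xi_1+\xi_2\le n_1+n_2$ as soon as $w_2>y$, so it is not in the Schur interval at all. Consequently the family of $\xi$'s to be swept, and the parameters of the three-column shapes left after placing the $\xi_1$-string, are different from what you verify. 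Your closing strategy is nevertheless the right one and is what the paper's terse appeal to Lemma~\ref{f81} amounts to: for $\xi_1=w_1+1$ the diagram $A\setminus V_1$, and for $\xi_1=w_1$ the diagram with the third column removed, are $F2^{\pi'}$ shapes with parameters $(a,b,c,d)=(y-1,\,w_2-y,\,y-1,\,1)$ and $(y,\,w_2-y,\,y-1,\,1)$ respectively, and the inequalities $a\le c+1$, $d\le b+1$ hold (using $y\ge 2$), so Proposition~\ref{prop3} applies — but this check must be carried out against the correct $\ww$ and $\nn$, which your write-up does not do.
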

\begin{proof}
We start the proof by showing that if the conditions $\ell=3,\, w_3=1,\,
x=w_2$ and $k=w_2-1$ are not satisfied then the support of $\lambda/\mu$
is not the entire Schur interval. Consider the minimal and maximal
fillings of the diagram
$$\ww=(w_1,w_2,\ldots,w_{\ell})\cup(x)\preceq\nn=(n_1,n_2,\ldots,n_s),$$
where $n_1=x+y,\, n_2=w_1$ and $\ell(\ww)>\ell(\nn)$.

Note that when $k=0$, the diagram $\lambda/\mu$ is disconnected, with one
of the connected components having a 2 by 2 block. In this case, by Lemma
\ref{disconnected}, $\sup(\lambda/\mu)$ is not the entire Schur
interval. So, assume $k>0$.
If $x>w_2$ then the partition
$\xi:=(w_1,\ldots,w_{\ell-2},w_{\ell-1}+1,w_{\ell}-1)\cup(x)$ shows that
$\sup(\lambda/\mu)$ is strictly contained in the Schur interval
$[\ww,\nn]$. If, on the other hand, $x<w_2$ then $1\leq k<x<w_2$, and this
implies $w_2\geq k+2$. Since  $k$ and $w_2$ are the lengths of two columns
of $\lambda^1/\mu$, it follows, from Lemma \ref{las}, that also in this case
the support of $\lambda/\mu$ is not the entire Schur interval.

So, for the rest of the proof we assume $x=w_2$, and therefore
$$\ww=(w_1,w_2,w_2,w_3,\ldots,w_{\ell})\preceq\nn=(n_1,n_2,\ldots,n_s),$$
with $n_1=w_2+y$ and $n_2=w_1$. Note that this implies $y\geq 2$.

If $\ell\geq 4$ then the partition
$\xi:=(w_1,w_2,w_2,\ldots,w_{\ell-2},w_{\ell-1}+1,w_{\ell}-1)$ clearly
shows that $\sup(\lambda/\mu)$ is not the entire Schur interval.
Thus, consider $\ell=3$ and note that since $1\leq k\leq w_2-1$, it
follows, from Lemma \ref{las}, that $\sup(\lambda/\mu)\subsetneq[\ww,\nn]$
except if $k=w_2-1$.

So, assume now that $\ell=3, x=w_2$ and $k=w_2-1$. Then,
$n_1=w_2+y=w_1+1$, and the minimal and maximal fillings are now
$$\ww=(w_1,w_2,w_2,w_3)\preceq\nn=(w_1+1,w_1,w_3+h),$$ where $h>0$ is the
number of rows that the second and the last columns share.
It follows that if $w_3\geq 2$ the partition
$\xi:=(w_1,w_2+1,w_2+1,w_3-2)$ satisfies $\ww\preceq\xi\preceq\nn$ but it is
not in the support of $\lambda/\mu$.

To finish the proof consider  $\ell=3, w_3=1, x=w_2$ and $k=w_2-1$, and let
 $\xi=(\xi_1,\xi_2,\xi_3,\xi_4)$ be a partition in the Schur interval
$[\ww,\nn]$. Using the same argument  used in the proof of the previous
Lemma \ref{f81}, it is easy to show that $\xi$ belongs to the support of
$\lambda/\mu$, and it follows that in this case the support of
$\lambda/\mu$ is the entire Schur interval.
\end{proof}

From  lemmas \ref{f81} and \ref{f82} we deduce the following result.

\begin{corollary}\label{f8}
If the skew diagram $\lambda/\mu$ is an $F7$ configuration, then its support is the Schur interval if and only if $\lambda/\mu$ is an $A7$ configuration.
\end{corollary}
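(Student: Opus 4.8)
The plan is simply to combine Lemma~\ref{f81} and Lemma~\ref{f82}, which together exhaust all $F7$ configurations. Given an $F7$ configuration~\eqref{cf8}, recall that $w_2$ is the length of the second longest among the first $\ell$ columns, that $x$ is the length of the last column, that $k$ is the number of rows shared by the last two columns, and that $\lambda^*=(1^y)$; the normalisations made after~\eqref{cf8} (legitimate by Lemma~\ref{bloco}) already allow us to assume that the first column has length $\le y$ and that the last-but-one column starts at least one row below the topmost box of the last column. First I would observe that exactly one of the conditions $w_2\le y$ or $w_2>y$ holds, so every $F7$ diagram falls under the hypothesis of precisely one of the two lemmata.

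Next, in the case $w_2\le y$, Lemma~\ref{f81} asserts that $\sup(\lambda/\mu)=[\ww,\nn]$ if and only if $\ell=3$, $w_3=1$, $x=w_2$ and $k=w_2-1$; and in the case $w_2>y$, Lemma~\ref{f82} yields the identical list of conditions. Hence, independently of the case, $\sup(\lambda/\mu)=[\ww,\nn]$ if and only if $\ell=3$, $w_3=1$, $x=w_2$ and $k=w_2-1$. I would then invoke the definition of an $A7$ configuration (an $F7$ skew diagram with $\ell=3$, $x=w_2$, $w_3=1$ and $k=w_2-1$) to conclude that these four conditions hold precisely when $\lambda/\mu$ is an $A7$ configuration, which is exactly the assertion of the corollary.

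The only point requiring a modicum of care is confirming that the dichotomy $w_2\le y$ versus $w_2>y$ is genuinely exhaustive and that the characterising conditions appearing in the two lemmata are verbatim the same; once this is noted there is nothing further to do, so there is no real obstacle here, the substance having been carried out in Lemmata~\ref{f81} and~\ref{f82}.
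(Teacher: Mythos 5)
Your proposal is correct and matches the paper exactly: the paper deduces Corollary~\ref{f8} precisely by combining Lemma~\ref{f81} (case $w_2\le y$) with Lemma~\ref{f82} (case $w_2>y$), noting that the two cases are exhaustive and yield the same characterising conditions $\ell=3$, $w_3=1$, $x=w_2$, $k=w_2-1$, i.e.\ the definition of an $A7$ configuration. Nothing further is needed.
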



\section{Full interval linear expansion of multiplicity--free skew Schur functions. }

\medskip

We are now ready to identify the basic multiplicity--free skew Schur functions whose support is the entire interval $[{\bf w},{\bf n}]$. Our strategy and terminology follows closely the one used in the proof of Lemma $7.1$ in \cite{DTK}.

\noindent {\bf Proof of Theorem \ref{main}}.
    If $\lambda/\mu$ satisfy one or more of the conditions listed in the theorem, then the corresponding Schur function is multiplicity--free.
    They are particular instances of the configurations  ${R0-R4}$ described in Theorem \ref{schurfree} as follows : $A1$ is in $R4$; $A2$ is in $R1$, or $R3$; $A3$ and $A4$ are in $R3$; $A6$ is in $R1$ or $R3$; and $A7$ is in $R1$.
    The strategy for the reciprocal is to use  Corollary \ref{TF3}  to analyse the support of the basic multiplicity--free skew Schur function $s_{\lambda/\mu}$ listed under cases ${ R0-R4}$ in Theorem \ref{schurfree}.  We next consider, in bold, the five cases, ${ \bf R0-R4}$, in turn.

\medskip

 ${\bf R0}$. If $\mu$ or $\lambda^*$ is the zero partition $0$ then either $\lambda/\mu$ or $(\lambda/\mu)^{\pi}$ is a partition. This means that the minimum and maximum LR fillings of $\lambda/\mu$ coincide and therefore, $\sup(\lambda/\mu)=[{\bf w},{\bf n}]=\{\nn=\ww\}$.

\bigskip

 ${\bf R4}$. In this case both $\mu$ and $\lambda^*$ are rectangles, and thus $\lambda$ is a fat hook. Thanks to Lemma \ref{bloco}, we may assume that $\lambda=((a+b)^x,b^y)$ and $\mu=(a^x,0^y)$ with $a,b,x,y\geq 1$, as illustrated below:
  $$\lambda/\mu=\begin{tikzpicture}
\fill[color=blue!20] (0,.6) rectangle (1,1.7);
\draw (0,0) rectangle (1,1.7)  (0,.6) rectangle (2,1.7);
\node at (1.15,0.2) {$y$};\node at (2.15,1.2) {$x$};\node at (.5,1.85) {$a$};\node at (1.5,1.9) {$b$};
\end{tikzpicture}.$$

Since $\lambda/\mu$ has two disconnected components, by Lemma \ref{disconnected}, it follows that if any of the components has a 2 by 2 block then its support is not the entire Schur interval. Thus, we are left with four cases to analyse. When $a=b=1$ or when $x=y=1$ we get a two column or a two row diagram. In both cases, by Proposition \ref{2col}, the Schur interval coincides with the support of $\lambda/\mu$.  In any other case we get either an $F1$ or an $F1^{\pi}$ configuration, and, by Proposition \ref{prop1}, we find that the support of  $\lambda/\mu$ is strictly contained in its Schur interval.

Therefore, by Lemma \ref{bloco}, we find that if both $\mu$ and $\lambda^*$ are rectangles, then $\sup(\lambda/\mu)=[\ww,\nn]=\{\nn,\ww\}$ if and only if  $\lambda/\mu$ satisfy conditions $(ii)$ of the theorem.

\bigskip

 ${\bf R2}$. The two main subcases, $\mu$ a rectangle of $m^n$--shortness 2 and $\lambda^*$ a fat hook, and
vice versa, are denoted by ${\bf S2}$ and ${\bf S2^{\pi}}$, respectively. Each  has four subcases, as
illustrated by:

\begin{align*}
\begin{tikzpicture}
\fill[color=blue!20] (0,.5) rectangle (1,1.75);
\draw (0,0) rectangle (1,.25)  (0,.25) rectangle (1,.5);
\draw (1,0) -- (1.3,0) -- (1.3,.75) -- (1.8,.75) -- (1.8,1.25) -- (2.3,1.25) -- (2.3,1.75) -- (1,1.75) -- (1,.5);
\draw (0,.5) -- (0,1.75) -- (1,1.75);
\node at (1,2) {${\scriptstyle S2(a)}$};
\fill[color=blue!20] (3.5,.5) rectangle (4,1.75);
\fill[color=blue!20] (4,1) rectangle (4.5,1.75);
\draw (4.8,1.25) rectangle (5.8,1.5)  (4.8,1.5) rectangle (5.8,1.75);
\draw (3.5,0) -- (4.8,0) -- (4.8,1.75) -- (4.5,1.75) -- (4.5,1) -- (4,1) -- (4,.5) -- (3.5,.5) -- (3.5,0);
\draw (3.5,.5) -- (3.5,1.75) -- (4.5,1.75);
\node at (4.5,2) {${\scriptstyle S2^{\pi}(a)}$};
\fill[color=blue!20] (7,.75) rectangle (8.25,1.75);
\draw (8.25,.75) rectangle (8.5,1.75)  (8.5,.75) rectangle (8.75,1.75);
\draw (7,-.55) -- (7.5,-.55) -- (7.5,-.05) -- (8,-.05) -- (8,.45) -- (8.75,.45) -- (8.75,.75) -- (7,.75) -- (7,-.55);
\draw (7,.75) -- (7,1.75) -- (8.75,1.75);
\node at (8,2) {${\scriptstyle S2(a')}$};
\fill[color=blue!20] (10,.75) rectangle (10.75,1.75);
\fill[color=blue!20] (10.75,1.25) rectangle (11.25,1.75);
\draw (10,-.55) rectangle (10.25,.45)  (10.25,-.55) rectangle (10.5,.45);
\draw (10.5,.45) -- (11.75,.45) -- (11.75,1.75) -- (11.25,1.75) -- (11.25,1.25) -- (10.75,1.25) -- (10.75,.75) -- (10,.75) -- (10,.45);
\draw (10,.75) -- (10,1.75) -- (11.25,1.75);
\node at (10.9,2) {${\scriptstyle S2^{\pi}(a')}$};
\end{tikzpicture}\\
\begin{tikzpicture}
\fill[color=blue!20] (0,1) rectangle (.5,1.75);
\draw (.25,0) rectangle (.5,1)  (0,0) rectangle (.25,1);
\draw (.5,0) -- (.8,0) -- (.8,.75) -- (1.3,.75) -- (1.3,1.25) -- (1.8,1.25) -- (1.8,1.75) -- (.5,1.75) -- (.5,.5);
\draw (0,1) -- (0,1.75) -- (.5,1.75);
\node at (1,2) {${\scriptstyle S2(b)}$};
\fill[color=blue!20] (3.5,.5) rectangle (4,1.75);
\fill[color=blue!20] (4,1) rectangle (4.5,1.75);
\draw (4.8,1.75) rectangle (5.05,.75)  (5.05,1.75) rectangle (5.3,.75);
\draw (3.5,0) -- (4.8,0) -- (4.8,1.75) -- (4.5,1.75) -- (4.5,1) -- (4,1) -- (4,.5) -- (3.5,.5) -- (3.5,0);
\draw (3.5,.5) -- (3.5,1.75) -- (4.5,1.75);
\node at (4.5,2) {${\scriptstyle S2^{\pi}(b)}$};
\fill[color=blue!20] (7,1.25) rectangle (7.75,1.75);
\draw (7.75,1.25) rectangle (8.75,1.5)  (7.75,1.5) rectangle (8.75,1.75);
\draw (7,-.05) -- (7.5,-.05) -- (7.5,.45) -- (8,.45) -- (8,.95) -- (8.75,.95) -- (8.75,1.25) -- (7,1.25) -- (7,-.05);
\draw (7,1.25) -- (7,1.75) -- (8.75,1.75);
\node at (8,2) {${\scriptstyle S2(b')}$};
\fill[color=blue!20] (10,.75) rectangle (10.75,1.75);
\fill[color=blue!20] (10.75,1.25) rectangle (11.25,1.75);
\draw (10,.45) rectangle (11,.2)  (10,.2) rectangle (11,-.05);
\draw (10.5,.45) -- (11.75,.45) -- (11.75,1.75) -- (11.25,1.75) -- (11.25,1.25) -- (10.75,1.25) -- (10.75,.75) -- (10,.75) -- (10,.45);
\draw (10,.75) -- (10,1.75) -- (11.25,1.75);
\node at (10.9,2) {${\scriptstyle S2^{\pi}(b')}$};
\end{tikzpicture}.
\end{align*}

These skew Young diagrams are arranged so that those of type $S2^{\pi}$
are  the $\pi$--rotations of those of type $S2$, and the right--hand block of four is  the
conjugate of the left--hand block of four. Thanks to the rotation symmetry and the
conjugate symmetry, one has only  to consider two cases, which we
select to be ${\bf S2(a)}$ and ${\bf S2(b)}$.

\medskip

${\bf S2(a)}$. In this case $\lambda=((a+b+c+d)^x,(a+b+c)^y,(a+b)^z)$ and $\mu=(a^{x+y+z-2})$ with $x+y+z\geq 4$, for some integers such that $a\geq 2$, $c,d,x,y,z\geq 1$ and $b\geq 0$, as illustrated in the following figure:

$$\begin{tikzpicture}
\fill[color=blue!20] (0,.5) rectangle (1,1.75);
\draw (0,0) rectangle (1,.25)  (0,.25) rectangle (1,.5);
\draw (1,0) -- (1.3,0) -- (1.3,.75) -- (1.8,.75) -- (1.8,1.25) -- (2.3,1.25) -- (2.3,1.75) -- (1,1.75) -- (1,.5);
\draw (0,.5) -- (0,1.75) -- (1,1.75);
\draw (1.3,0) -- (1.3,1.75);
\draw (1.8,.75) -- (1.8,1.75);
\node at (.5,1.9) {$a$}; \node at (1.15,1.95) {$b$}; \node at (1.55,1.9) {$c$}; \node at (2.05,1.95) {$d$};
\node at (2.45,1.5) {$x$}; \node at (1.95,1) {$y$}; \node at (1.45,.4) {$z$};
\end{tikzpicture}.$$
By Lemma \ref{bloco}, we may assume that $b=0$.
We start by noticing that when $z=2$ the skew diagram $\lambda/\mu$ is disconnected with a 2 by 2 block,  in  which case we have $\sup(\lambda/\mu)\varsubsetneq[\ww,\nn]$, by Lemma \ref{disconnected}. Assume now that $z=1$, and note that by the hypothesis on $\lambda/\mu$, the length of the diagram is greater than, or equal to 4. Therefore, an $F0^{\pi}$ or an $F0'$ configuration appears when $c\geq 2$ or when $x,\,d\geq 2$, respectively. Again in this cases the support of $\lambda/\mu$ is strictly contained in the Schur interval by Corollary \ref{TF3}.  Now, when $c=x=1$, we get an $F5$ configuration, and when $c=d=1$, we get the conjugate of an ${F}5$ configuration. By Lemma \ref{f5}, $\sup(\lambda/\mu)\varsubsetneq[\ww,\nn]$. Therefore, in all subcases, the support of the skew diagram ${\bf S2(a)}$ is strictly contained in its Schur interval.

The analysis of case ${\bf S2(b)}$ is completely analogous to the previous one.


\bigskip

 ${\bf R3.}$ The two main subcases, $\mu$  a rectangle and $\lambda^*$ a fat hook of $m^n$--shortness 1, and
vice versa,  are denoted by ${\bf S3}$ and ${\bf S3^{\pi}}$, respectively. Each has six subcases, as
illustrated by:

\begin{align*}
\begin{tikzpicture}
\fill[color=blue!20] (0,1) rectangle (.5,1.75);
\draw (.5,1.5) rectangle (2,1.75);
\draw (0,0) -- (1,0) -- (1,.75) -- (1.5,.75) -- (1.5,1.5) -- (.5,1.5) -- (.5,1) -- (0,1) -- (0,0);
\draw (0,1) -- (0,1.75) -- (.5,1.75);
\node at (1,2) {${\scriptstyle S3(a)}$};
\fill[color=blue!20] (3.5,.25) rectangle (4,1.75);
\fill[color=blue!20] (4,1) rectangle (4.5,1.75);
\draw (3.5,0) rectangle (5,.25);
\draw (4,.25) -- (4,1) -- (4.5,1) -- (4.5,1.75) -- (5.5,1.75) -- (5.5,.75) -- (5,.75) -- (5,.25);
\draw (3.5,0) -- (3.5,1.75) -- (4.5,1.75);
\node at (4.5,2) {${\scriptstyle S3^{\pi}(a)}$};
\fill[color=blue!20] (7,1.25) rectangle (7.75,1.75);
\draw (7,-.25) rectangle (7.25,1.25);
\draw (7.25,.25) -- (8,.25) -- (8,.75) -- (8.75,.75) -- (8.75,1.75) -- (7.75,1.75) -- (7.75,1.25) -- (7.25,1.25);
\draw (7,1.25) -- (7,1.75) -- (8,1.75);
\node at (8,2) {${\scriptstyle S3(a')}$};
\fill[color=blue!20] (10.25,.75) rectangle (11,1.75);
\fill[color=blue!20] (11,1.25) rectangle (11.75,1.75);
\draw (11.75,.25) rectangle (12,1.75);
\draw (10.25,-.25) -- (11.25,-.25) -- (11.25,.25) -- (11.75,.25) -- (11.75,1.25) -- (11,1.25) -- (11,.75) -- (10.25,.75) -- (10.25,-.25);
\draw (10.25,.75) -- (10.25,1.75) -- (11.75,1.75);
\node at (10.9,2) {${\scriptstyle S3^\pi(a')}$};
\end{tikzpicture}\\
\begin{tikzpicture}
\fill[color=blue!20] (0,1.25) rectangle (.75,1.75);
\draw (0,0) rectangle (1,.75) (0,.75) rectangle (1.5,1);
\draw (1.5,1) -- (2,1) -- (2,1.75) -- (.75,1.75) -- (.75,1.25) -- (0,1.25) -- (0,1);
\draw (0,1) -- (0,1.75) -- (.75,1.75);
\node at (1,2) {${\scriptstyle S3(b)}$};
\fill[color=blue!20] (3.5,.75) rectangle (4,1.75);
\fill[color=blue!20] (4,1) rectangle (4.5,1.75);
\draw (4,.75) rectangle (5.5,1) (4.5,1) rectangle (5.5,1.75);
\draw (3.5,0) -- (4.75,0) -- (4.75,.5) -- (5.5,.5) -- (5.5,.75) -- (3.5,.75) -- (3.5,0);
\draw (3.5,0) -- (3.5,1.75) -- (4.5,1.75);
\node at (4.5,2) {${\scriptstyle S3^{\pi}(b)}$};
\fill[color=blue!20] (7,1) rectangle (7.5,1.75);
\draw (7.75,.25) rectangle (8,1.75) (8,.75) rectangle (8.75,1.75);
\draw (7,-.25) -- (7.75,-.25) -- (7.75,1.75) -- (7.5,1.75) -- (7.5,1) -- (7,1) -- (7,-.25);
\draw (7,1) -- (7,1.75) -- (7.5,1.75);
\node at (8,2) {${\scriptstyle S3(b')}$};
\fill[color=blue!20] (10.25,.75) rectangle (11,1.75);
\fill[color=blue!20] (11,1.25) rectangle (11.25,1.75);
\draw (10.25,-.25) rectangle (11,.75) (11,-.25) rectangle (11.25,1.25);
\draw (11.25,1.25) -- (11.25,1.75) -- (12,1.75) -- (12,.5) -- (11.5,.5) -- (11.5,-.25) -- (11.25,-.25);
\draw (10.25,.75) -- (10.25,1.75) -- (11.25,1.75);
\node at (10.9,2) {${\scriptstyle S3^{\pi}(b')}$};
\end{tikzpicture}\\
\begin{tikzpicture}
\fill[color=blue!20] (0,.75) rectangle (.75,1.75);
\draw (0,0) rectangle (1,.25);
\draw (1,.25) -- (1.5,.25) -- (1.5,1) -- (2,1) -- (2,1.75) -- (.75,1.75) -- (.75,.75) -- (0,.75) -- (0,0);
\draw (0,0) -- (0,1.75) -- (1.75,1.75);
\node at (1,2) {${\scriptstyle S3(c)}$};
\fill[color=blue!20] (3.5,.75) rectangle (4,1.75);
\fill[color=blue!20] (4,1.5) rectangle (4.5,1.75);
\draw (4.5,1.5) rectangle (5.5,1.75);
\draw (3.5,0) -- (4.75,0) -- (4.75,1) -- (5.5,1) -- (5.5,1.5) -- (4,1.5) -- (4,.75) -- (3.5,.75) -- (3.5,0);
\draw (3.5,0) -- (3.5,1.75) -- (4.5,1.75);
\node at (4.5,2) {${\scriptstyle S3^{\pi}(c)}$};
\fill[color=blue!20] (7,1) rectangle (8,1.75);
\draw (8.5,.75) rectangle (8.75,1.75);
\draw (7,-.25) -- (7.75,-.25) -- (7.75,.25) -- (8.5,.25) -- (8.5,1.75) -- (8,1.75) -- (8,1) -- (7,1) -- (7,-.25);
\draw (7,1) -- (7,1.75) -- (8,1.75);
\node at (8,2) {${\scriptstyle S3(c')}$};
\fill[color=blue!20] (10.25,.75) rectangle (10.5,1.75);
\fill[color=blue!20] (10.5,1.25) rectangle (11.25,1.75);
\draw (10.25,-.25) rectangle (10.5,.75);
\draw (10.5,-.25) -- (11,-.25) -- (11,.5) -- (12,.5) -- (12,1.75) -- (11.25,1.75) -- (11.25,1.25) -- (10.5,1.25) -- (10.5,-.25);
\draw (10.25,.75) -- (10.25,1.75) -- (11.25,1.75);
\node at (10.9,2) {${\scriptstyle S3^{\pi}(c')}$};
\end{tikzpicture}.
\end{align*}
As before, these skew Young diagrams are arranged so that those of type ${\bf S3^{\pi}}$ are
 the $\pi$--rotations of those of type ${\bf S3}$. This time the right--hand block of six is  the
conjugation of the left--hand block of six. Thanks to the rotation symmetry and the
conjugation symmetry, we have  only to consider three cases. We
choose to be ${\bf S3(a)}$, ${\bf S3(b')}$ and ${\bf S3(c')}$.

\medskip

${\bf S3(a)}$. There are two subcases which, by Lemma \ref{bloco}, may be reduced to $(i)$ $\mu=(a^{x+1})$ and $\lambda^*=((b+c)^z,c^{x+y})$ for some integers $a,b,c,z\geq 1$ and $x,y\geq 0$ such that $x+y\geq 1$;  and $(ii)$ $\mu=((a+b)^{x+1})$ and $\lambda^*=((b+c+d)^z,c^{y+x})$ where $a,b,c,z,y\geq 1$ and $d,x\geq 0$, as illustrated below:

\begin{align*}
\parbox{4cm}{$(i)$\;\begin{tikzpicture}
\fill[color=blue!20] (0,1) rectangle (.5,1.75);
\draw (0,0) rectangle (.5,1) (0,.5) rectangle (1,1) (.5,.5) rectangle (1,1.75) (.5,1.5) rectangle (1.75,1.75);
\draw (0,0) -- (0,1.75) -- (1.75,1.75);
\node at (.25,1.9) {$a$};\node at (.75,1.95) {$b$};\node at (1.35,1.9) {$c$};
\node at (1.15,1.25) {$x$};\node at (1.15,.75) {$y$};\node at (.65,.25) {$z$};
\end{tikzpicture}}
\parbox{4cm}{$(ii)$\;\begin{tikzpicture}
\fill[color=blue!20] (0,1) rectangle (1,1.75);
\draw (0,0) rectangle (.5,1) (0,.5) rectangle (1.5,1) (1,.5) rectangle (1.5,1.75) (1,1.5) rectangle (2.25,1.75);
\draw (0,0) -- (0,1.75) -- (2.25,1.75);
\node at (.25,1.9) {$a$};\node at (.75,1.95) {$b$};\node at (1.3,1.95) {$d$};\node at (1.9,1.9) {$c$};
\node at (1.65,1.25) {$x$};\node at (1.65,.75) {$y$};\node at (.65,.25) {$z$};
\end{tikzpicture}}.
\end{align*}

In the subcase ${\bf S3(a)} (i)$ we start by identifying an $F0, F0^{\pi}$ or an $F0^{\pi'}$ configuration in the diagram whenever $a,z\geq 2$ or when $x\geq 1$ and $b\geq 2$, or even when $a,y\geq 2$. In these cases, the support of the skew diagram is not the entire Schur interval by Corollary \ref{TF3}. There remains thus six cases to consider.

 When $a=1$ and $x=0$ we get an $F6$, and by Lemma \ref{f6} $\sup(\lambda/\mu)=[\ww,\nn]$ if and only if $\lambda/\mu$ is an $A6$ configuration. When $a=b=1$ and  $x>0$, we get the conjugate of the $\pi$--rotation of an $\widehat{F}5$ configuration if $c\ge 2$, and  an $F2'$ configuration if $c=1$. By Proposition \ref{prop3} and Lemma \ref{f5}, we find that in these cases, $\sup(\lambda/\mu)=[\ww,\nn]$ if and only if $\lambda/\mu$ is an $A2$ configuration.

Assume now that $a\geq 2$ and $z=1$. Then, if $y=x=0$ we get a two row skew diagram, the configuration $(ii)$ of the statement under proof, and thus its support equals the Schur interval. If $y=0$ and $b=1$ we get the conjugate of an $F4$ configuration, and, by Lemma \ref{f4}, its support  is equal to  the Schur interval if and only if $\lambda/\mu$ is an $A4$ configuration. Finally, if $y=1$ and $x=0$ we get an $F2$ configuration, and when $y=1$ and $b=1$ we get the transpose of an $\widetilde{F}5$ configuration. By Proposition \ref{prop3} and Lemma \ref{f5}, we find that the support  is equal to  the Schur interval if and only if $\lambda/\mu$ is an $A2$ configuration.

Consider now the subcase ${\bf S3(a)}(ii)$. When $a,z\geq 2$, we have an $F0$ configuration, and when $x\geq 1$ and $d\geq 2$, we have an $F0^{\pi}$ configuration. Note also that if $y\geq 2$, we have an $F0'^\pi$ configuration. So we are left with six cases to analyse, all having $y=1$.

If $d=0$ and $z=1$, we get an $F2$ configuration (recall that one  restricts to  basic skew Schur functions); if $d=0$  and $a=1$, we get either an $F2$ or an $\widehat{F}5$ configuration; if $d=1$  and $z=1$, we also get the conjugate of an $\widetilde{F}5$ configuration; if $d=1$  and $a=1$, we also get the $\pi$--rotation of an $F3$ configuration; if $x=0$  and $z=1$, we also get an $F2$ configuration; and finally, if $x=0$  and $a=1$, we also get the $\pi$--rotation of an $\widehat{F}5$ configuration.
Using propositions  \ref{prop3}, \ref{f3} and Lemma \ref{f5}, it follows that the support of $\lambda/\mu$ is the full Schur interval if and only if $\lambda/\mu$ is an $A2$ or an $A3$  configuration.

\medskip

${\bf S3(b')}$. Using Lemma \ref{bloco}, we may assume that $\lambda=((a+b+1)^x,(a+1)^{y+z},a^t)$ and $\mu=(a^{x+y})$, for some integers $a,b,x,t\geq 1$, and $y,z\geq 0$ such that $y+z\geq 1$, as illustrated by
$$\lambda/\mu=\begin{tikzpicture}
\fill[color=blue!20] (0,1) rectangle (.5,2);
\draw (0,0) rectangle (.5,1) (0,.5) rectangle (.75,1) (.5,.5) rectangle (.75,2) (.5,1.5) rectangle (1.25,2);
\draw (0,0) -- (0,2) -- (1.25,2);
\node at (.25,2.15) {$a$};\node at (1,2.2) {$b$};
\node at (1.4,1.75) {$x$};\node at (.9,1.25) {$y$};\node at (.9,.75) {$z$};\node at (.65,.25) {$t$};
\end{tikzpicture}.$$
When $a,t\geq 2$ or $a,z\geq 2$ we get an $F0$ or an $F0^{\pi'}$ configuration, in which cases we know from Corollary \ref{TF3} that its support is not the entire Schur interval. An $F0'$ configuration also appears whenever $b,x\geq 2$, and again the support of $\lambda/\mu$ is strictly contained in its Schur interval. So we are left with six cases to analyse.

If $a=b=1$, we get an $F2'$ configuration, and if $a=x=1$, we get either an $F2'$ configuration (when $b=1$), or the conjugate of the $\pi$--rotation of an $\widehat{F}5$ configuration (otherwise). By Proposition \ref{prop3} and Lemma \ref{f5}, it follows that in this cases, the support of $\lambda/\mu$ is equal to the entire Schur interval if and only if $\lambda/\mu$ is an $A2$ configuration.

Assume now $a\geq 2$ and $t=z=1$. If also $b=1$, we get respectively an $F5$,  and if $x=1$, we get the conjugate of  an $\widetilde{F}5$ configuration. Again by Lemma \ref{f5}, it follows that in these cases the support of $\lambda/\mu$ is strictly contained in the Schur interval.
For the remaining two cases, assume that $a\geq 2$, $t=1$ and $z=0$. When $b=1$, we get the conjugate of an $\widetilde{F}4$ configuration, and when $x=1$, we get the conjugate of an $F4$ configuration.  In these cases, by Lemma \ref{f4}, the support of $\lambda/\mu$ is the full Schur interval if and only if $\lambda/\mu$ is the conjugate of an $A4$ configuration.

\medskip

${\bf S3(c')}$. Thanks to Lemma \ref{bloco} there are only two subcases to study: $(i)$ $\mu=(a^{x+y})$ and $\lambda^*=((b+1)^t,1^{z+y})$ for some integers $a,b,x,t\geq 1$ and $y,z\geq 0$ such that $y+z\geq 1$; and $(ii)$  $\mu=((a+b)^{x+y})$ and $\lambda^*=((b+c+1)^t,1^{z+y})$ with $a,b,x,t,z\geq 1$ and $y,c\geq 0$, as illustrated below:

\begin{align*}
\parbox{4cm}{$(i)$\,\begin{tikzpicture}
\fill[color=blue!20] (0,1) rectangle (.5,2);
\draw (0,0) rectangle (.5,1) (0,.5) rectangle (1,1) (.5,.5) rectangle (1,2) (.5,1.5) rectangle (1.25,2);
\draw (0,0) -- (0,2) -- (1.25,2);
\node at (.25,2.15) {$a$};\node at (.8,2.2) {$b$};
\node at (1.4,1.75) {$x$};\node at (1.15,1.25) {$y$};\node at (1.15,.75) {$z$};\node at (.65,.25) {$t$};
\end{tikzpicture}}
\parbox{4cm}{$(ii)$\,\begin{tikzpicture}
\fill[color=blue!20] (-.5,1) rectangle (.5,2);
\draw (-.5,0) rectangle (0,1) (-.5,.5) rectangle (1,1) (.5,.5) rectangle (1,2) (.5,1.5) rectangle (1.25,2);
\draw (-.5,0) -- (-.5,2) -- (1.25,2);
\node at (-.25,2.15) {$a$};\node at (.25,2.2) {$b$};\node at (.8,2.15) {$c$};
\node at (1.4,1.75) {$x$};\node at (1.15,1.25) {$y$};\node at (1.15,.75) {$z$};\node at (.15,.25) {$t$};
\end{tikzpicture}}.
\end{align*}
In subcase ${\bf S3(c')} (i)$, we have $F0$ and $F0^{\pi}$ configurations whenever $a,t\geq 2$ or $b,x+y\geq 2$, respectively. In these cases, by Corollary
\ref{TF3}  we have $\sup(\lambda/\mu)\varsubsetneq[\ww,\nn]$. It remains to consider four cases. When $a=b=1$, we get an $F2'$ configuration, and when $a=x+y=1$, we get an $F6$ configuration. Then, Proposition \ref{prop3} and Lemma \ref{f6} show that, in each case, the support of $\lambda/\mu$ is equal to the Schur interval if and only if $\lambda/\mu$ is an $A2'$ or an $A6$ configuration.

Assume now $a\geq 2$ and $t=1$. If $b=1$ we get the diagram
$$\begin{tikzpicture}
\fill[color=blue!20] (-.25,.75) rectangle (.5,1.75);
\draw (-.25,0) rectangle (0.5,.75) (-.25,.25) rectangle (.75,.75) (.5,.25) rectangle (.75,1.75) (.5,1.25) rectangle (1,1.75);
\draw (-.25,0) -- (-.25,1.75) -- (1,1.75);
\node at (.15,1.9) {$a$};
\node at (1.15,1.55) {$x$};\node at (.9,1) {$y$};\node at (.9,.55) {$z$};
\end{tikzpicture}.$$
Now, an $F0'$ appears if $z\geq 2$, and, when $z=1$, we get the conjugate of an $\widetilde{F}5$ configuration. By Theorem \ref{TF3} and Lemma \ref{f5}, it follows that in these cases the Schur interval contains strictly the  support of $\lambda/\mu$. On the other hand, if $x+y=1$, then we must have $x=1$ and $y=0$, and thus $\lambda/\mu$ has the form
$$\begin{tikzpicture}
\fill[color=blue!20] (0,.75) rectangle (.75,1);
\draw (0,0) rectangle (0.75,.75) (0,.25) rectangle (1.75,.75) (.75,.75) rectangle (2,1);
\draw (1.75,.75) -- (1.75,1);
\draw (0,0) -- (0,1) -- (2,1);
\node at (.35,1.15) {$a$};\node at (1.25,1.2) {$b$};
\node at (1.9,.5) {$z$};
\end{tikzpicture}.$$
As before, an  $F0'$ appears if $z\geq 2$, and, when $z=1$, we get an $F2$ configuration, in which case, respectively, by Corollary \ref{TF3} and Proposition \ref{prop3}, we find that the Schur interval equals the support of $\lambda/\mu$ if and only it is an $A2$ configuration.

Consider now the subcase ${\bf S3(c')} (ii)$. If $a,t\geq 2$ or $z\geq 2$ or $c,x+y\geq 2$ we get respectively $F0, F0^{\pi'}$ or $F0^{\pi}$ configurations on $\lambda/\mu$. In all these cases, using Corollary \ref{TF3}, we find that $\sup(\lambda/\mu)\varsubsetneq[\ww,\nn]$.
So we may assume $z=1$.

 If  we have $c=0$, we get an $F4$ or an $\widetilde{F}4$ configuration, respectively, when $a=1$ and $t=1$, and, by Lemma \ref{f4}, the support of $\lambda/\mu$ is equal to its entire Schur interval if and only if the $\lambda/\mu$ is an $A4$ configuration.

If $c=1$ then we get either an $F5$, or an $\widetilde{F}5$ or an $F3$ configuration, respectively, when $a,x\geq 2$ and $t=1$, or when $a=1$ and $t,x\geq 2$, or when $a=t=x=1$. In these cases, by Proposition \ref{f3} and Lemma \ref{f5}, the support of the skew diagram is equal to the Schur interval if and only if $\lambda/\mu$ is an $A3$ configuration.

Finally, assume $c\geq 2$ and $x+y=1$. When $t=1$, we get an $F3$ configuration, and, when $a=1$ and $t\geq 2$, we get the $\pi$--rotation of an $\widehat{F}5$. By Proposition \ref{prop3} and Lemma \ref{f5}, we find that, in these cases, $\sup(\lambda/\mu)=[\ww,\nn]$ if and only if $\lambda/\mu$ is an $A3$ configuration.

\bigskip

 ${\bf R1.}$ There are two main subcases. We denote them by $S1$ and $S1^{\pi}$ in which $\mu$ and
$\lambda^*$, respectively, are rectangles of $m^n$--shortness 1. Each has four subcases, as illustrated
by:

\begin{align*}
\begin{tikzpicture}
\fill[color=blue!20] (0,.25) rectangle (.5,2);
\draw (0,0) rectangle (1,.25);
\draw (.5,.25) -- (.5,2) -- (2,2) -- (2,1.5) -- (1.5,1.5) -- (1.5,1) -- (1,1) -- (1,.25);
\draw (0,0) -- (0,2) -- (2,2);
\node at (1.2,2.2) {${\scriptstyle S1(a)}$};
\fill[color=blue!20] (3.5,.5) rectangle (4,2);
\fill[color=blue!20] (4,1) rectangle (4.5,2);
\draw (4.5,1.75) rectangle (5.5,2);
\draw (3.5,0) -- (5,0) -- (5,1.75) -- (4.5,1.75) -- (4.5,1) -- (4,1) -- (4,.5) -- (3.5,.5) -- (3.5,0);
\draw (3.5,0) -- (3.5,2) -- (5.5,2);
\node at (4.5,2.2) {${\scriptstyle S1^{\pi}(a)}$};
\fill[color=blue!20] (7,1.5) rectangle (8.75,2);
\draw (8.75,1) rectangle (9,2);
\draw (7,0) -- (7.5,0) -- (7.5,.5) -- (8,.5) -- (8,1) -- (8.75,1) -- (8.75,1.5) -- (7,1.5) -- (7,0);
\draw (7,0) -- (7,2) -- (9,2);
\node at (8,2.2) {${\scriptstyle S1(a')}$};
\fill[color=blue!20] (10.5,1) rectangle (11.5,2);
\fill[color=blue!20] (11.5,1.5) rectangle (12,2);
\draw (10.5,0) rectangle (10.75,1);
\draw (10.75,.5) -- (12.5,.5) -- (12.5,2) -- (12,2) -- (12,1.5) -- (11.5,1.5) -- (11.5,1) -- (10.5,1);
\draw (10.5,0) -- (10.5,2) -- (12.5,2);
\node at (11.5,2.2) {${\scriptstyle S1^{\pi}(a')}$};
\end{tikzpicture}\\
\begin{tikzpicture}
\fill[color=blue!20] (0,1) rectangle (.25,2);
\draw (0,0) rectangle (.25,1);
\draw (.25,1) -- (.25,2) -- (2,2) -- (2,1.5) -- (1.5,1.5) -- (1.5,.75) -- (1,.75) -- (1,.3) -- (.6,.3) -- (.6,0) -- (0,0);
\draw (0,0) -- (0,2) -- (2,2);
\node at (1.1,2.2) {${\scriptstyle S1(b)}$};
\fill[color=blue!20] (3.5,.5) rectangle (4,2);
\fill[color=blue!20] (4,1.25) rectangle (4.5,2);
\fill[color=blue!20] (4.5,1.7) rectangle (4.9,2);
\draw (5.25,1) rectangle (5.5,2);
\draw (5.25,2) -- (4.9,2) -- (4.9,1.7) -- (4.5,1.7) -- (4.5,1.25) -- (4,1.25) -- (4,.5) -- (3.5,.5) -- (3.5,0) -- (5.25,0) -- (5.25,1);
\draw (3.5,0) -- (3.5,2) -- (5.5,2);
\node at (4.5,2.2) {${\scriptstyle S1^{\pi}(b)}$};
\fill[color=blue!20] (7,1.75) rectangle (8,2);
\draw (8,1.75) rectangle (9,2);
\draw (7,0) -- (7,1.75) -- (9,1.75) -- (9,1.4) -- (8.7,1.4) -- (8.7,1) -- (8.25,1) -- (8.25,.5) -- (7.5,.5) -- (7.5,0) -- (7,0);
\draw (7,0) -- (7,2) -- (9,2);
\node at (8,2.2) {${\scriptstyle S1(b')}$};
\fill[color=blue!20] (10.5,.6) rectangle (10.8,2);
\fill[color=blue!20] (10.8,1) rectangle (11.25,2);
\fill[color=blue!20] (11.25,1.5) rectangle (12,2);
\draw (10.5,0) rectangle (11.5,.25);
\draw (11.5,.25) -- (12.5,.25) -- (12.5,2) -- (12,2) -- (12,1.5) -- (11.25,1.5) -- (11.25,1) -- (10.8,1) -- (10.8,.6) -- (10.5,.6) -- (10.5,.25);
\draw (10.5,0) -- (10.5,2) -- (12.5,2);
\node at (11.5,2.2) {${\scriptstyle S1^{\pi}(b')}$};
\end{tikzpicture}.
\end{align*}
These skew Young diagrams are arranged so that those of type $S1^{\pi}$ are
the $\pi$--rotations of their left--hand neighbour of type $S1$. Moreover, the right--hand block of
four are  the conjugates of the left--hand block.
Thanks to the rotation symmetry and the
conjugation symmetry, it is therefore only necessary to consider two cases. We
select to be ${\bf S1^{\pi}(a')}$ and ${\bf S1^\pi(b)}$.

\medskip

${\bf S1^{\pi}(a')}$. Using Lemma \ref{bloco}, we may assume that $$\lambda/\mu=\begin{tikzpicture}
\fill[color=blue!20] (0,.7) rectangle (.25,2.2);
\fill[color=blue!20] (.25,1.2) rectangle (.7,2.2);
\fill[color=blue!20] (.7,1.7) rectangle (1.2,2.2);
\draw (0,0) rectangle (.25,.7);
\draw (.25,.7) -- (.25,1.2) -- (.7,1.2) -- (.7,1.7) -- (1.2,1.7) -- (1.2,2.2) -- (1.7,2.2) -- (1.7,.7) -- (.25,.7);
\draw (0,0) -- (0,2.2) -- (1.7,2.2);
\end{tikzpicture}
.$$
Moreover, we may assume that the diagram has at least 4 columns, three amongst the last but one have different sizes,  otherwise we are in case ${\bf R3}$ or ${\bf R4}$. In this case, $\lambda/\mu$ is disconnected and the largest component as a 2 by 2 block. By Lemma \ref{disconnected}, it follows that the support of $\lambda/\mu$ is not the full Schur interval.

\medskip

${\bf S1^{\pi}(b)}$. We may use again Lemma \ref{bloco} to reduce our study to the skew diagrams of the form $$\lambda/\mu=\begin{tikzpicture}
\fill[color=blue!20] (1.2,2.2) rectangle (1.7,2.8);
\fill[color=blue!20] (.25,1.2) rectangle (.7,2.8);
\fill[color=blue!20] (.7,1.7) rectangle (1.2,2.8);
\draw (1.7,1.2) rectangle (1.95,2.8);
\draw (.25,.7) -- (.25,1.2) -- (.7,1.2) -- (.7,1.7) -- (1.2,1.7) -- (1.2,2.2) -- (1.7,2.2) -- (1.7,.7) -- (.25,.7);
\draw (.25,.7) -- (.25,2.8) -- (1.7,2.8);
\node at (1,2.95) {$a$};\node at (2.1,2) {$x$}; \node at (1.85,.9) {$y$};
\end{tikzpicture},$$
\noindent where $a+1$ is the number of columns of the diagram, which we assume that amongst the first $a$ there is at least three distinct lengths (otherwise we are in previous cases), $x$ is the length of the last column and  $\lambda^*=(y)$. Moreover, we assume without loss of generality that the last but one column starts at least one row below the topmost box of the last column, and that the first column has length $\leq y$.
Note that if there are at least two columns, among the first $a$ ones, having the same length and $y\geq 2$, then $\lambda/\mu$ is an $F0$ configuration.  We are left with two cases: either $(i)$ $y=1$, or $(ii)$ the first $a$ columns have pairwise distinct lengths.
In the first case, the minimal and maximal fillings of $\lambda/\mu$ are
$$\ww=(x,n_1^{s_1},\ldots,n_r^{s_r},1^{s_{r+1}})\preceq\nn=(x+1,n_1^{s_1},\ldots,n_r^{s_r},1^{s_{r+1}-1}),$$
with $x\geq n_1>\cdots>n_r>1$ and $r\geq 2$.
Therefore, the partition $$\xi:=(x,n_1^{s_1},\ldots,n_r+1,n_r^{s_r-1},1^{s_{r+1}-1})$$ shows that the support of $\lambda/\mu$ is not the entire admissible interval.

Finally, assume that amongst the first $a$ columns of $\lambda/\mu$ there are no two with the same length, and that $y\geq 2$.
By Corollary \ref{f8}  it follows that the support of $\lambda/\mu$ is equal to the Schur interval if and only if $\lambda/\mu$ is an $A7$ configuration.
$\square$

\medskip

\begin{example}
Let $\lambda/\mu=((b+3)^2,2^{y+1})/(b+2,1^{y+1})$ be an $A3$ configuration
with $\ww=(y+2,2,1^{b+1})\preceq\nn=(y+3,2,1^b)$.
Then
$$s_{\lambda/\mu}=s_{(y+2,2,1^{b+1})'}+s_{(y+2,2,2,1^{b-1})'}+s_{(y+2,3,1^{b})'}+s_{(y+3,1^{b+2})'}+s_{(y+3,2,1^b)'}.$$
\end{example}

\begin{example}
Let $\lambda/\mu=(4^4,3^2)/(3,2,1^3)$ be an $A7$ configuration with
$\ww=(5,4,4,1)\preceq\nn=(6,5,3)$.
Then
$$s_{\lambda/\mu}=s_{(5,4,4,1)'}+s_{(5,5,3,1)'}+s_{(6,4,3,1)'}+s_{(5,5,4)'}+s_{(6,5,2,1)'}+s_{(6,4,4)'}+s_{(6,5,3)'}.$$
\end{example}

\bigskip

The characterisation of the multiplicity--free  Schur function products that attain the full interval, given in Corollary \ref{MainC}, is now a consequence of  Theorem  \ref{main},  and of Corollary \ref{stemp}.

\medskip
We list now explicitly the partitions $(\mu,\nu,\lambda)$ for which $c_{\mu\,\nu}^\lambda=1$ for all  $\lambda\in[\mu\cup\nu,\,\mu+\nu]$.
Recall that the Pieri rule expresses the product of a Schur function and a single row (column) Schur function in terms of Schur
functions \cite{Pieri, ful,stanley}. These are precisely the cases where the Hasse diagram of the interval $[\mu\cup\nu,\,\mu+\nu]$ is given by the Pieri rule.
\begin{corollary}
Let $(\mu,\,\nu,\lambda)$ be a triple of partitions.
\begin{itemize}
\item[$(a)$] If $\mu$ or $\nu$ is the zero partition,
$c_{\mu,0}^\lambda=1$ if and only if $\mu=\lambda$

\item[$(b)$] If $\mu=(1^x)$ and $\nu=(1^y)$ (or { vice versa}), with $x\ge y\ge 1$,

$c_{\mu,\nu}^\lambda=1$ if and only if $\lambda\in [(1^{x+y});\,(2^y,\,1^{x-y})]$.

\item[$(b')$] If $\mu=(x)$ and $\nu=(y)$ (or { vice versa}), with $x\ge y\ge 1$,

$c_{\mu,\nu}^\lambda=1$ if and only if $\lambda\in [({x,\,y});\,(x+y)]$.

\item[$(c)$] If $\mu=(1^x)$ and  $\nu=(2,1^y)$
is  such that  $1\le x\leq y+1$ (or { vice versa}),

$c_{\mu,\nu}^\lambda=1$ if and only if $\lambda\in [(2,\,1^{x+y});\,(3,\,2^{x-1},\,1^{y-x+1})]$.

\item[$(c')$]  If $\mu=(x)$  and  $\nu=(z,1)$
is such that $1\le x\leq z$,
 (or { vice versa}),

$c_{\mu,\nu}^\lambda=1$ if and only if $\lambda\in [(z,\,x,\,1);\,(z+x+1,1)]$.

\item[$(d)$] If $\mu=(1)$ and  $\nu=(a,1^y)$
is  such that $a\ge 3$, $y\ge 1$ (or { vice versa}),

$c_{\mu,\nu}^\lambda=1$ if and only if $\lambda\in [(a,\,1^{y+1});\,(a+1,\,1^y)]$.

\item[$(d')$]  If $\mu=(1)$  and  $\nu=(z,1^a)$
is such that $a\ge 2$, $z\ge 1$
 (or { vice versa}),

$c_{\mu,\nu}^\lambda=1$ if and only if $\lambda\in [(z,\,1^{a+1});\,(z+1,\,1^a)]$.
\end{itemize}

\end{corollary}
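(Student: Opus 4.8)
The plan is to obtain this corollary directly from the classification, established just above, of the multiplicity--free products $s_\mu s_\nu$ whose support is the whole Schur interval, together with two facts recalled earlier in the paper: $s_\mu s_\nu=\sum_{\lambda\in[\mu\cup\nu,\,\mu+\nu]}c_{\mu\nu}^\lambda\, s_\lambda$, and $c_{\mu\nu}^\lambda=0$ for every $\lambda\notin[\mu\cup\nu,\,\mu+\nu]$. The requirement ``$c_{\mu\nu}^\lambda=1$ for all $\lambda\in[\mu\cup\nu,\,\mu+\nu]$'' is equivalent to saying that $s_\mu s_\nu$ is multiplicity--free and has support equal to the whole interval: no interior coefficient is $\ge 2$, none is $0$, and the coefficients outside the interval vanish automatically. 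Hence the pairs $(\mu,\nu)$ to be listed are exactly those of that classification, and the only remaining work is to write the endpoints $\mu\cup\nu$ and $\mu+\nu$ explicitly in each case.

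I would then run through the cases. If $\nu=0$, the interval is $\{\mu\}$, which is $(a)$. If $\mu=(1^x)$, $\nu=(1^y)$ with $x\ge y$, merging the parts gives $\mu\cup\nu=(1^{x+y})$ and adding them componentwise gives $\mu+\nu=(2^y,1^{x-y})$, which is $(b)$; the transposed computation for two single rows gives $\mu\cup\nu=(x,y)$ and $\mu+\nu=(x+y)$, which is $(b')$. In the hook cases one factor is a single column $(1^x)$ or a single row $(x)$ and the other is a hook $(a,1^y)$ or $(z,1^a)$; here I compute $\mu\cup\nu$ by sorting the combined multiset of parts and $\mu+\nu$ entrywise. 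The inequalities on $x$ (and on the first hook part) in the classification are what guarantee that the support actually fills the dominance interval, i.e. that the associated skew shape is one of the $A$--configurations of Theorem \ref{main}; they produce the explicit multi--row forms recorded in $(c)$, $(c')$, $(d)$, $(d')$, the split $(c)/(d)$ and $(c')/(d')$ merely separating the two sub--alternatives (small first hook part, versus $x=1$). As a cross--check one may recover the constancy of the coefficients from the Pieri rule, since in every listed pair one factor is a single row, a single column, or $0$, so $s_\mu s_\nu$ is a sum of pairwise distinct $s_\lambda$ with all coefficients $1$, indexed by the horizontal or vertical strip additions to the other factor.

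I do not anticipate a genuine obstacle. The forward implication is the bookkeeping of $\cup$ and $+$ on the explicit shapes; the reverse implication is immediate, because for a pair $(\mu,\nu)$ outside the list the cited classification already supplies a $\lambda\in[\mu\cup\nu,\,\mu+\nu]$ with $c_{\mu\nu}^\lambda\ge 2$ or $c_{\mu\nu}^\lambda=0$, while $c_{\mu\nu}^\lambda=0$ for all $\lambda$ outside the interval. The only point requiring care is to keep the exponents of the multi--row partitions straight in the hook cases and to confirm that the stated right--hand endpoint is indeed $\mu+\nu$ under the relevant inequalities, which is routine.
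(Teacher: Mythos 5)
Your proposal is correct and matches the paper's (implicit) argument: the paper likewise obtains this list directly from the preceding classification of multiplicity--free products $s_\mu s_\nu$ with full interval support, writing the endpoints $\mu\cup\nu$ and $\mu+\nu$ explicitly in each case and invoking the Pieri rule to confirm the coefficients are all $1$ (the vanishing outside the interval being automatic). Only bookkeeping remains, exactly as you say; note in passing that your routine computation in case $(c')$ gives the upper endpoint $(z+x,1)$, consistent with $|\mu|+|\nu|$.
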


\begin{remark} We may also list explicitly  the multiplicity--free Schur function products  whose support is an interval, that is, those whose number of components (summands) is the cardinal
of the Schur interval.

\begin{enumerate}

\item[$(a)$]
$ s_0s_\nu=s_\nu$ has 1 component;

\vskip0.3cm

\item[$(b)$] The conjugate  Schur interval $[(1^{x+y});\,(2^y,\,1^{x-y})]$ is a saturated chain.
$$s_{(1^x)}s_{(1^y)}=s_{(2^y,\,1^{x-y})}+s_{(2^{y-1},\,1^{x-y+2})}+\cdots+s_{(2^{1},\,1^{x+y-2})}+s_{(1^{y+x})},\quad x\ge y\ge 1,$$
 \noindent has $y+1$ components. In particular,
 when $y=1$,  $s_{(1^x)}s_{(1)}=s_{(2,\,1^{x-1})}+s_{(1^{1+x})}$ has 2 components.

\vskip0.3cm

\item[$(b')$] It is the conjugate of $(b)$.
\vskip0.3cm

\item[$(c)$] There are two cases for the conjugate Schur interval $[(2,\,1^{x+y}),\,(3,\,2^{x-1},\,1^{y-x+1})]$, with $1\le x\le y+1$.
When $y=x-1$,

$$s_{1^x}s_{2\,1^{x-1}}=s_{(x\,x\,1)'}+s_{(x+1\,x-1\,1)'}+s_{(x+1\,x)'}+s_{(x+2\,x-2\,1)'}+s_{(x+2\,x-1)'}+s_{(x+3\,x-3\,1)'}+\cdots$$
$$+s_{(2x-1\,1\,1)'}+s_{(2x-1\,2)'}+s_{(2x\,1)'},\;x\ge 1;\;\mbox{and}$$

\noindent when $y> x-1$,
$$s_{(2,\,1^{x-1+k})}s_{(1^x)}=s_{(x+k,\;x,\;1)'}+s_{(x+k,\;x+1)'}+s_{(x+1+k,\;x-1,\;1)'}+s_{(x+1+k,\;x)'}+
s_{(x+2+k,\;x-2,\;1)'}+$$ $$+s_{(x+2+k,\;x-1)'}
+s_{(x+3+k,\;x-3,\;1)'}+\cdots+s_{(2x-1+k,\;1,\;1)'}+s_{(2x-1+k,\;2)'}+s_{(2x+k,\;1)'},\;\;x\ge 1,\;k> 0;$$

\vskip0.3cm

\item[$(c' )$] It is the conjugate of $(c)$.

\vskip0.3cm

\item[$(d )$] The conjugate Schur interval $[(a,\,1^{y+1});\,(a+1,\,1^y)]$, $a\ge 3$, $y\ge 1$, has 3 elements.

 $$s_{(a,\,1^y)}s_1=s_{({a},\,1^{y+1})}+s_{(a,\,2,\,1^{y-1})}+s_{(a+1,\,1^{y})},\quad a\ge 3,\;y\ge 1.$$

\item[$(d' )$] It is the conjugate of $(d)$.

\end{enumerate}
\end{remark}

The classification of the   Schur function products that attain the full interval follows from Proposition \ref{rib} and from the study of the multiplicity-free case.
\begin{corollary}\label{cor:fullschur}
The Schur function product $s_{\mu}s_{\nu}$ has all Littlewood-Richardson  coefficients positive in the entire Schur interval if and only if one of the conditions in  Corollary \ref{MainC},  or one of the following is true:
$\mu=(r_1,1^{r_2})$ and $\nu=(s_1,1^{s_2})$ are hooks such that $s_2=r_2=1$, and either $r_1=s_1\ge 2$ or $r_1\geq2$ and $s_1=r_1+1$ (or vice versa).
\end{corollary}

$$\parbox{3cm}{\begin{tikzpicture}
\draw (0,0) -- (0,1)--(1.25,1)--(1.25,0.8)--(1,.8)--(1,.4)--(.4,.4)--(.4,0)--(0,0);
\end{tikzpicture}}\parbox{5cm}{\begin{tikzpicture}
\fill[color=blue!20] (0,.25) rectangle (1,.5);\draw (0,.25) rectangle (1,.5);
\draw (0,0) rectangle (1,.25);\draw (1,.25) rectangle (2.25,.5);
\end{tikzpicture}}\parbox{6cm}{\begin{tikzpicture}
\fill[color=blue!20] (0,.75) rectangle (.85,1.6);\draw (0,.75) rectangle (.85,1.6);
\draw (0,0) rectangle (0.25,.75);\draw (0,.5) rectangle (.85,.75);\draw (.85,.75) rectangle (1.1,1.6);
\node at (.5,1.75) {$a$};\node at (1.25,1.2) {$x$};\node at (.4,.25) {$y$};\node at (4,1.2) {$a=2 \text{ and }1\le x\leq y+1,$};
\node at (3.4,.8) {$\text{ or }a\geq 3\text{  and } x=1;$};
\end{tikzpicture}}$$

$$\parbox{7cm}{\begin{tikzpicture}
\fill[color=blue!20] (0,.75) rectangle (.85,1);\draw (0,.75) rectangle (.85,1);
\draw (0,0) rectangle (0.25,.75);\draw (0,.5) rectangle (.85,.75);\draw (.85,.75) rectangle (1.75,1);
\node at (.5,1.15) {$z$};\node at (1.25,1.15) {$x$};\node at (.4,.25) {$a$};\node at (4.1,1) {$a=1\text{ and }1\le x\leq z,$};
\node at (3.9,.6) {$\text{  or }a\geq 2\text{ and } x=1;$};
\end{tikzpicture}}\parbox{7cm}{\begin{tikzpicture}
\fill[color=blue!20] (0,.75) rectangle (.85,1.6);\draw (0,.75) rectangle (.85,1.6);\draw (.85,1.35) rectangle (1.7,1.6);
\draw (0,0) rectangle (0.25,.75);\draw (0,.5) rectangle (.85,.75);\draw (.85,.75) rectangle (1.1,1.6);
\node at (.5,1.75) {$r_1$};\node at (1.25,1.75) {$s_1$};\node at (.5,.25) {$r_2$};\node at (1.35,1) {$s_2$};
\node at (4.5,1.2) {$s_2=r_2=1,\text{ and either }$};
\node at (4.25,.8) {$r_1=s_1\ge 2\text{ or }r_1\geq2$};
\node at (3.8,.4) {$\text{and }s_1=r_1+1.$};
\end{tikzpicture}}
$$

\begin{proof}
By Proposition \ref{rib}, Corollary \ref{MainC}, and Proposition \ref{f4},  it remains to analyse the case
where both partitions $\mu=(r_1,1^{r_2})$ and $\nu=(s_1,1^{s_2})$  are hooks. The Schur interval of the product $s_{\mu}s_{\nu}$
is $[\ww=(a,b,1^{r_1+s_1-2}),\nn=(r_2+s_2+2,2^{d-1},1^{c-d})]$, where $a=\max\{r_2+1,s_2+1\}$, $b=\min\{r_2+1,s_2+1\}$, and $c=\max\{r_1,s_1\}$, $d=\min\{r_1,s_1\}$.

 We start by considering that the two hooks are in the conditions of the corollary, that is,  the arms  $r_1\le s_1$ of both hooks differ by at most one unit and both legs $r_2+1$ and $s_2+1$ have length $2$ ($r_2=s_2=1$).  Then $[\ww=(2,2,1^{r_1+s_1-2}),\nn=(4,2^{r_1-1},1^{s_1-r_1})]$. Denote by  $A$  the skew shape of the corresponding diagram consisting of the two hooks. The size  of $A$ is $|A|=r_1+s_1+2$.  Given a partition $\ww\precneqq\rho=(\rho_1,\ldots,\rho_{\ell})\precneqq\nn$, we must have
 $\rho_1\in\{2,3,4\}$.

  If $\rho_1=4$, then by definition of dominance order $0<k<r_1-1$ boxes in the rows of length two of $\nn$ have to lower down to get  $\rho=(4,2^{r_1-1-k},1^{s_1-r_1+2k})$.
 Let $T$ be the only tableau with shape  $A$ and content $\nn$. Then, changing the leftmost $k$ letters $2$ in the third row of $T$ into letters $1$, we get a tableau with skew shape $A$ and content $\rho$, proving that $\rho$ is in the support of $s_{\mu}s_{\nu}$.

 If $\rho_1=2$, then, by the reverse argument on the dominance order, $0<k<r_1-1$ boxes in the rows of lengths one of $\ww$ have to be lifted to get $\rho=(2^{2+k},1^{r_1+s_1-2-2k})$.  Let $T$ be the only tableau with shape  $A$  and content $\ww$. Then, changing the rightmost $k$ letters $1$ in the third row of $T$ into letters $2$, we get a tableau with shape $A$ and content $\rho$, proving that $\rho$ is in the support of $s_{\mu}s_{\nu}$.

Assume now that $\rho_1=3$. Then, $\rho_2\in\{2,3\}$ but $\rho_3<3$ since the sum of the first three parts of $\nn$ is $n_1+n_2+n_3\le 4+2+2<9$. If $\rho_2=2$, then
 $\rho=(3,2^{k},1^{k'})$ with $0\leq k\leq r_1$ and  $k'=r_1+s_1+2-3-2k$.  Fill the boxes  of the shape $A$ by placing letters $1$ in the first row, a letter $2$ in the second row and a letter $3$ in the forth; place also $k$ letters $2$ in the third row and fill the remaining boxes of this row with $1$'s. The resulting tableau is an LR tableau with content $\rho$.

  Finally, if $\rho_2=3$, then $\rho=(3,3,2^{k},1^{k'})$ with $0\leq k\leq r_1-2$ and $k'=r_1+s_1+2-6-2k$.
Fill the boxes of the skew shape $A$ by placing letters $1$ in the first row, a letter $2$ in the second row and a letter $3$ in the forth and in the rightmost box of the third row; place also $k$ letters $2$ in the third row and fill the remaining boxes of this row with $1$'s. The resulting tableau is an LR tableau with content $\rho$.

Assume now that the two hooks are not in the conditions of the corollary. Then, either one of the legs has length greater than 2, or the two arms differ by at least two boxes. That is, either $(i)$ $r_2>1$ or $s_2>1$ and $r_1,s_1\geq 3$, or $(ii)$ $r_2=s_2=1$, $r_1\geq 2$ and $s_1>r_1+1$.
In case $(i)$, we may assume without loss of generality that $r_2\geq s_2$, and if $r_2=s_2$ consider the partition  $\rho=(r_2+1, r_2+1, 3, 1^{r_1+s_1-5})$, and if $r_2>s_2>1$ consider $\rho=(r_2+1,s_2+2,3, 1^{r_1+s_1-6})$, and  we have no place in $A$ for a string of length $3$. In case $(ii)$, consider the partition $\rho=(2,2,2^{r_1-1},2,1^{s_1-r_1-2})$  and we have at most place in $A$ for $r_1+1$ strings of length $2$. In any case, it is clear that $\rho$ is in the interval $[\ww,\nn]$, but not in the support of $s_{\mu}s_{\nu}$.
\end{proof}

\vskip 0.3cm

\noindent{\em Acknowledgments}: The first and third authors would like to express their gratitude to P. R. W. McNamara for the interesting discussions during the FPSAC 2009, Hagenberg, Austria, and in particular for recalling to the first author the algorithm in~\cite{az}.
The second author would like to thank CMUC, Centre for Mathematics, University of Coimbra, for offering him hospitality during the preparation of this paper.


\begin{thebibliography}{9999}

\bibitem{az}
   O. Azenhas, `The admissible interval for the invariant factors of a product of matrices', \emph{Linear and Multilinear Algebra} {\bf 46} (1999), 51--99.

   \bibitem{azma}
   O. Azenhas and R. Mamede, `Matrix realizations of pairs of Young tableaux, keys and shuffles', \emph{S\'{e}minaire Lotharingien de Combinatoire} {\bf 53} (2006), article B53h, 22pp.

\bibitem{Bes}
   C. Bessenrodt, `On multiplicity--free products of Schur $P$--functions',
   \emph{Ann. Comb.} {\bf 6} (2002), 119--124.
\bibitem{bk}
   C. Bessenrodt, A. Kleshchev, `On Kronecker products of complex representations of the symmetric and alternating groups',
   \emph{Pacific J. Math.} {\bf 190} (1999), no. 2, 201--223.
\bibitem{BBR} F. Bergeron, R. Biagioli, and M. H. Rosas, `Inequalities
between Littlewood-Richardson coefficients'. J. Combin.
Theory Ser. A, 113(4):567--590, 2006.
\bibitem{Bry}
   T. Brylawski. `The lattice of integer partitions', \emph{Discrete Math.} {\bf 6} (1973), 201--219.
\bibitem{DTK}
   D. Q. J. Dou, R. L. Tang and R. C. King `A hive model determination of multiplicity--free Schur function products and skew Schur functions', \emph{preprint} {\tt  	arXiv:0901.0186}

\bibitem{FFLP} S. Fomin, W. Fulton, C-K. Li, and Y-T. Poon,
`Eigenvalues, singular values, and Littlewood-Richardson coefficients'.
Amer. J. Math., 127(1):101--127, 2005.
\bibitem{ful}
   W. Fulton, \emph{Young Tableaux with Applications to Representation Theory and Geometry}, London Mathematical Society
   Student Texts, vol.35, Cambridge University Press, Cambridge, 1997.
   \bibitem{GaHaSri} Marisa Gaetz, Will Hardt, Shrutti Sridhar, `Support equalities among ribbon Schur function',\emph{ preprint} {\tt arXiv:1709.03011}.



\bibitem{dominance}
   C. Greene and D. Kleitman, `Longest chains in the lattice of integer
   partitions ordered by majorization', \emph{European J. Comb.} {\bf 7} (1986), 1--10.

\bibitem{gut}
   C. Gutschwager, `On multiplicity--free skew characters and the Schubert calculus',
   \emph{Ann. Comb.} {\bf 14} (2010), 339--353.

\bibitem{KWvS}
   R. C. King, T. A. Welsh and S. J. van Willigenburg, `Schur positivity of skew Schur function differences
   and applications to ribbons and Schubert classes', \emph{J. Algebraic Combin.} {\bf 28} (2008), 139--167.
\bibitem{Kir} A. Kirillov, `An invitation to the generalized saturation
conjecture'. Publ. Res. Inst. Math. Sci., 40(4):1147--1239, 2004.

\bibitem{lam}
  T. Y. Lam, `Young diagrams, Schur functions, the Gale--Ryser theorem and
  a Conjecture of Snnaper', \emph{J. Pure Appl. Algebra} {\bf 10} (1977/78), 81--94.
\bibitem{LPP} T. Y. Lam, A. Postnikov, and P. Pylyavskyy, `Schur
positivity and Schur log-concavity'. Amer. J. Math., 129(6):1611--
1622, 2007.
\bibitem{LV}
   R. A. Liebler and M. R. Vitale,`Ordering the partition characters of the symmetric group',
   \emph{J. Algebra} {\bf 25} (1973), 487--489.
\bibitem{lr}
   D. E. Littlewood and A. R. Richardson, `Group characters and algebra', \emph{Phil. Trans. Royal Soc. A (London)}
   {\bf 233} (1934), 99--142.
\bibitem{Mn}
   P. R. W. McNamara, `Necessary conditions for Schur--positivity', \emph{J. Algebraic Combin.} {\bf 28} (2008), 495--507.
\bibitem{MnvW}
   P. R. W. McNamara and S. van Willigenburg, `Positivity results on ribbon Schur function differences',
   \emph{European J. Combin.} {\bf 30} (2009), 1352--1369.
\bibitem{MnvW1}
   P. R. W. McNamara and S. van Willigenburg, `Maximal supports and Schur-positivity among
connected skew shapes',
   \emph{European J. Combin.} {\bf 33} (2012), 1190--1206.
\bibitem{Pieri}
   M. Pieri, `Sul problema degli spazi secanti', \emph{Rend. Ist. Lombardo (2)} {\bf 26} (1893), 534--546.
\bibitem{SvW}
    K. M. Shaw and S. van Willigenburg, `Multiplicity free expansions of Schur $P$--functions',
    \emph{Ann. Comb.} {\bf 11} (2007), 69--77.
\bibitem{stanley}
   R. P. Stanley, \emph{Enumerative Combinatorics. Vol. 2}, Cambridge University Press, Cambridge 1999.
\bibitem{Stem}
   J. R. Stembridge, `Multiplicity--free products of Schur functions',
   \emph{Ann. Comb.} {\bf 5} (2001), 113--121.
\bibitem{thomas}
   H. Thomas and A. Yong, `Multiplicity--free Schubert calculus',
   \emph{Canad. Math. Bull.} {\bf 53} (2010), 171–-186.
\bibitem{vW}
  S. van Willigenburg,` Equality of Schur and skew Schur functions', \emph{Ann. Comb.} {\bf 9} (2005), 355--362.
\bibitem{zaballa}
   I. Zaballa, `Minimal and maximal Littlewood--Richardson sequences', \emph{preprint}, (1996).
\end{thebibliography}
\end{document}